\newtheorem{thm}{Theorem}[section]
\newtheorem{cor}[thm]{Corollary}
\newtheorem{lem}[thm]{Lemma}
\newtheorem{pro}[thm]{Proposition}
\theoremstyle{definition}
\newtheorem{dfn}[thm]{Definition}
\theoremstyle{remark}
\newtheorem{rem}[thm]{Remark}
\numberwithin{equation}{section}
\begin{document}
\title[Polynomial convergence rate to NESS]{On the polynomial convergence
  rate to nonequilibrium steady states}
\author{Yao Li}
\address{Yao Li: Department of Mathematics and
    Statistics, University of Massachusetts Amherst, Amherst, MA, 01003}
\email{yaoli@math.umass.edu}
\subjclass[2010]{Primary 60J25, 82C05; Secondary 37N05, 60G07, 82C35}

\keywords{microscopic heat conduction, Markov process, polynomial ergodicity, coupling, induced
  chain method}

\begin{abstract}
We consider a stochastic energy exchange model that models the 1D
microscopic heat conduction in the nonequilibrium setting. In this
paper, we prove the existence and uniqueness of the
nonequilibrium steady state (NESS) and, furthermore, the
polynomial speed of convergence to the NESS. Our result shows that the
asymptotic properties of this model and its deterministic dynamical
system origin are consistent. The proof uses a new technique called
the induced chain method. We partition the state space and work on both
the Markov chain induced by an ``active set'' and the tail of return
time to this ``active set''.
\end{abstract}

\maketitle
\section{Introduction}

As a ubiquitous process, heat conduction has been studied for over two
hundred years. However, from a mathematical point of view, many microscopic aspects of heat conduction in
solids and gas are still unclear. For example, the derivation of macroscopic
heat conduction laws like Fourier's law from microscopic Hamiltonian
dynamics is a well-known challenge in statistical mechanics for the
past over a
century. Over the last several decades, numerous mathematical models
of 1-D microscopic heat conduction have been proposed and
studied. Some of these models have purely deterministic dynamics
\cite{ 
  eckmann2006nonequilibrium, lefevere2010hot,   dolgopyat2011energy, ruelle2012mechanical}, while others are defined by stochastic differential
equations \cite{rey2003nonequilibrium, liverani2012toward, rey2000asymptotic,
  rey2002fluctuations, eckmann1999nonequilibrium, bernardin2005fourier} or Markov jump
processes \cite{grigo2012mixing, li2013existence,
  li2014nonequilibrium, sasada2013spectral}.  These
models give mathematical frameworks for studying nonequilibrium
phenomena including basic properties of nonequilibrium steady states
(NESS), thermal conductivity, local thermodynamic equilibrium (LTE), fluctuation
theorems, and eventually Fourier's law.

This paper focuses on fundamental properties of nonequilibrium
phenomena including the existence and uniqueness of the NESS and,
furthermore, the polynomial-speed convergence to the NESS, for a class of 1-D
microscopic heat conduction models. The model we study is a stochastic
energy exchange model that is inspired by the
KMP model introduced in \cite{kipnis1982heat}, in which a chain of $N$ sites are coupled with
two heat baths. Each site carries a certain amount of energy. An
exponential clock is associated with each pair of adjacent sites. When
the clock rings, these two sites exchange energy in a
``random halves'' fashion. The energy exchange with the bath follows a
similar rule. Different from the model in \cite{kipnis1982heat}, the
rate of an exponential clock here is
energy-dependent. In this paper, the clock rate between two adjacent
sites, called the {\it stochastic energy exchange rate}, depends on
the square root of the minimum of two site energies. We refer Section
\ref{modeldesc} for the precise description of the model.

\begin{figure}[h]
\centerline{\includegraphics[width = 0.8\linewidth]{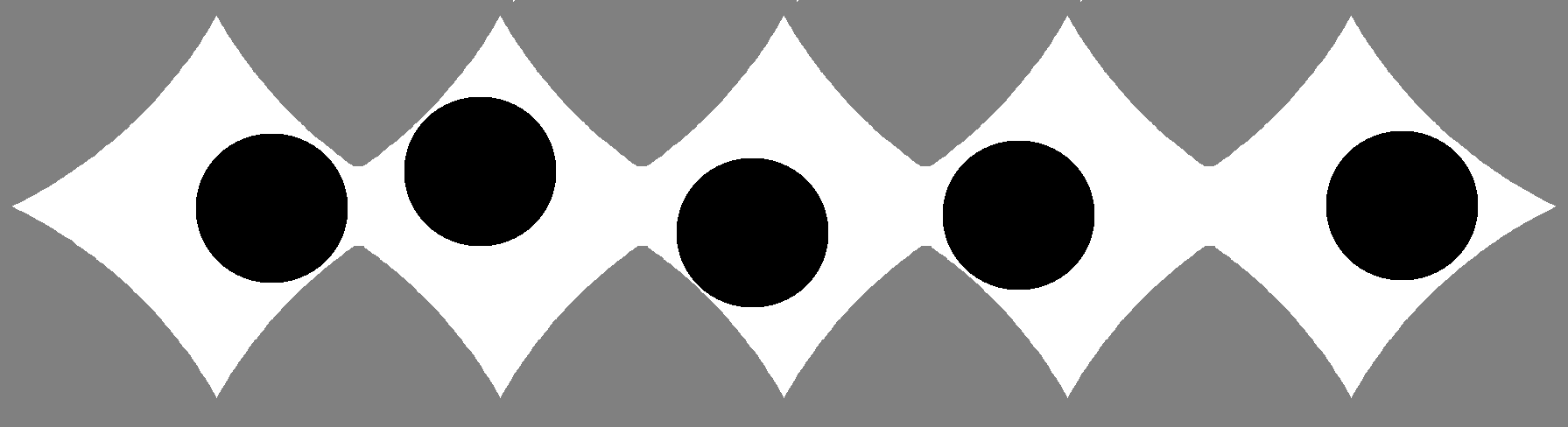}}
\caption{Locally confined particle system\label{fig:1} }
\end{figure}
The motivation of letting the stochastic energy exchange rate depend
on the minimum of site energy comes from the study of a deterministic
dynamical system heat conduction model, called the locally confined particle
system. Introduced in \cite{bunimovich1992ergodic}, the locally confined particle system is a chain of locally confining cells in
$\mathbb{R}^{2}$ like in Figure \ref{fig:1}. An identical rigid
disk-shaped moving particle is contained in each cell. A particle can not pass through the ``bottleneck'' between adjacent cells but can collide
with its neighbors. Therefore, kinetic energy can be exchanged by
particle-particle collisions. Studying such a purely deterministic
dynamical system, especially in the non-equilibrium setting, is very
challenging. Only very limited rigorous results are known. On the other hand,
it is well-known that chaotic billiard systems like the Lorentz gas
have many stochastic properties due to the quick decay of
correlation \cite{chernov1999decay, chernov2000decay,
  chernov2006chaotic, baladi2015exponential}. Hence a natural approach is to only record the kinetic
energy of each particle and approximate this model by the stochastic
energy exchange model described above. Let $E_{i}$ and $E_{i+1}$ be kinetic energies of adjacent
particles. Assume the geometry of the cell allows a particle to be able to
completely ``hide'' from its neighbors, i.e., neighboring particles
can not collide with a particle when it is located at some area of its
cell. Our numerical simulations in \cite{li2015stochastic} show
that when starting from a fixed energy configuration $(E_{i}, E_{i+1})$, the first particle-particle collision time  is well approximated by an
exponential distribution. Heuristically, this is an expected result
because for a sufficiently chaotic dynamical system, the rescaled ``return
time'' and ``hitting time'' to asymptotically small set both converge to
the same exponential distribution \cite{haydn2005hitting}. The
numerical simulation in \cite{li2015stochastic} further shows that the rate of the exponential distribution for the first
collision time can be approximated by $\sim \sqrt{\min \{ E_{i},
  E_{i+1} \}}$ when one of $E_{i}$ and $E_{i+1}$ is sufficiently small. The heuristic reason of this rate is that
when one particle is sufficiently slow and out the reach of its
neighbors, the next particle-particle collision time should be primarily determined by
the kinetic energy of the slow particle.   

We remark that at a certain time rescaling limit, the locally confined 
particle system may have a different stochastic energy exchange rate. In
a non-rigorous study of the locally confined particle system
\cite{gaspard2008heat, gaspard2008heat2}, a rate function $\sim \sqrt{E_{i} +
E_{i+ 1}}$ is obtained at a certain rare interaction limit and time rescaling
limit. Assuming this rate of interaction, the mixing rate is 
known to be exponential \cite{grigo2012mixing,
  li2013existence, sasada2013spectral}. Without any time rescaling limit, it is a
simple mathematical fact that the speed of mixing in the locally
confined particle system can not be faster than $t^{-2}$, provided
particles can ``hide'' from their neighbors. (See lemma 3.1 of \cite{li2015stochastic}.) This $\sim t^{-2}$ speed
of mixing is also one of the main result of this paper. Therefore, the approximate interaction rate
$\sim \sqrt{\min \{ E_{i}, E_{i+1} \}}$ computed in \cite{li2015stochastic} is consistent with
the asymptotical dynamics of the locally confined particle system at
its original time scale.

In this paper, among other results, we proved that when the stochastic rate of energy
exchange between two sites are $\sim \sqrt{\min \{ E_{i},
  E_{i+1}\} }$, the Markov chain generated by our model has $\sim
t^{-2}$ rate of mixing and $\sim t^{-2}$ rate of contraction. These results completely
match our analytical and numerical results about the locally confined
particle system in \cite{li2015stochastic}. As shown in the proof later in this
paper, the main source of the slow-speed
mixing comes from the rate $\sim \sqrt{\min \{ E_{i}, E_{i+1} \} } $. When one
site acquires a very low amount of energy from an energy exchange, the
rates of two corresponding clocks become very low and can not be ``rescued'' by other
``faster clocks''. Hence the next energy exchange at this site will not happen
within a long time period, which obviously slows down the speed of mixing and convergence. We
remark that this is also consistent with the mechanism of slow-speed mixing
phenomenon of the locally confined particle system.

In addition to the ergodicity, the quantitative property of the
NESS is also of great interest. Our result shows the absolute
continuity of the NESS with respect to the Lebesgue measure. In
addition, we obtain the tail of the first passage time to a certain
uniform reference set. This helps us to show that the tail of the marginal
distribution of NESS with respect to each site is $\geq E^{-1/2}$ when
$E \ll 1$. Since the explicit formulation of the NESS usually can not be given, a detailed
study on the properties of the NESS will rely heavily on numerical
simulations. We will write a separate paper to numerically study the NESS,
the long-range correlation, and the thermal conductivity of the
generalized KMP model studied in this paper. 

Despite the straightforward heuristic argument, a rigorous proof of
the slow-speed mixing of a Markov process is known to be
difficult. To the best of our knowledge, our result is the first polynomial
convergence result in non-equilibrium settings. We prove that the
Markov process generated by the generalized KMP model has a mixing
rate $\sim t^{-2}$ and a convergence rate $\sim t^{-1}$ to the NESS. The closest related
results we know are the slower-than-exponential convergence to the NESS in
\cite{yarmola2013sub, yarmola2014sub, cuneo2016non, cuneo2015non} and the polynomial convergence to the equilibrium in
\cite{li2016polynomial}. In addition to the upper bound of
convergence, we also showed that the speed of convergence to NESS has
a lower bound $t^{-1-\gamma}$ for any $\gamma > 0$. This further
confirms the polynomial ergodicity.

The method of proving the polynomial-speed convergence to the
steady-state is called the induced chain method. Since the source of
slow convergence is the low-energy site, we partition the phase into
two parts: the ``active'' set and the ``inactive'' set, where the
``active'' set means all site energies are above a certain
threshold. Then we work on the Markov chain induced by the ``active''
set. Different from the model in \cite{li2016polynomial}, where the ``active''
set satisfies a Doeblin-type condition, in this paper we need some
extra work to show the stochastic stability of the induced chain. The
induced chain method consists of three steps. We
first show that the induced chain admits a uniform reference set, on which trajectories can be coupled with
strictly positive probability. Then we control the first passage time to this uniform
reference set for the induced chain, this is done by constructing a Lyapunov function as we have
done in \cite{li2016polynomial}. Last, we show that the time duration of one step of the
induced chain has a polynomial tail by a technical construction of
Lyapunov functions. A global Lyapunov function is obtained from a
``tower construction'' of local Lyapunov functions with respect to
nearest neighbor interactions. The three steps above imply that the first passage time of
the full system to the uniform reference set has a polynomial
tail. Then we can apply results from discrete renewal theory and prove the
polynomial-speed convergence and mixing. 

We remark that a common way of proving polynomial-speed convergence
is to construct a Lyapunov function \cite{li2016polynomial, hairer2010convergence}. However, in this
model such a construction is too complicated to be practical. One
needs to consider both the lack of
tightness and the possible inactive clocks in the construction of a
Lyapunov function. By using the induced chain method, we can treat
these two problems separately, and eventually give the tail of the
first passage time to a uniform reference set. This method is useful
in proving the polynomial (or sub-exponential) convergence of other
models. In addition, we believe the induced chain technique can be
extended into a hierarchy of finitely many induced chains and be
applied to a wider range of problems. 

The paper is organized in the following way: Section \ref{modelresult}
introduces the model and states the main result. The main strategy of proof,
i.e., the induced chain method, is introduced in Section
\ref{approach}. Estimations for the time duration of one step of the
induced chain are given in Section \ref{low}. The first entry time to
the uniform reference set of the induced chain is done in Section
\ref{high}. Finally, we complete the whole proof in Section \ref{pf}.

\section{Model and Result}\label{modelresult}
\subsection{Stochastic approximation of deterministic dynamics}

We start with a short review of the locally confined particle system
and its stochastic approximation. Consider a chain of cells in
$\mathbb{R}^{2}$ that are formed by finitely many piecewise $C^{3}$
curves. A rigid disk-shaped {\it moving particle} is confined in each
cell, as shown in Figure \ref{fig:1}. Two adjacent cells are connected by a ``bottleneck'' opening
such that particles can not pass the opening but can collide with each
other. A particle moves freely until it collides with the cell boundary
or its neighbor particles. In addition, we assume each cell forms a
strongly chaotic billiard table. In the absence of other
particles, the billiard map of one particle is exponentially
mixing. We refer to \cite{bunimovich1992ergodic} for the ergodicity of the locally
confined particle system under suitable conditions and \cite{chernov2006chaotic}
for major results of dynamic billiards. 

In the locally confined particle system, a particle has a quick decay of
correlation due to frequent collisions with the cell
boundary. Therefore, it is natural to simplify the model by assuming
that the geometry within a cell is forgotten by the particle. More
precisely, we only record the kinetic energy carried by a particle and
assume that the time to the next energy exchange is exponentially
distributed. The rate of this exponential distribution is called
the {\it stochastic energy exchange rate}, denoted by $R(E_{i},
E_{i+1})$, where $E_{i}$ and $E_{i+1}$ are kinetic energies of particles. If in addition, we assign a
suitable rule for the energy redistribution in a
particle-particle collision, a Markov jump process is obtained. 

In \cite{li2015stochastic}, we numerically showed that the time to the next
particle-particle collision always has an exponential tail that depends on the
energy configuration of particles. This further supports the idea of
approximating the locally confined particle system by a Markov jump
process. The rate $R(E_{i}, E_{i+1})$ can be
computed numerically as the slope of the exponential tail of the waiting time to 
the next particle-particle collision. Our numerical result in
\cite{li2015stochastic} showed that $R(E_{i},
E_{i+1}) \sim \sqrt{ \min \{ E_{i}, E_{i+1} \}}$ when at least one of $E_{i}$
or $E_{i+1}$ is sufficiently small. 

As stated in the introduction, the slow convergence phenomenon comes from the slow clock rate
when one of $E_{i}$ or $E_{i+1}$ is very small. To preserve this
qualitative property of the model, it is sufficient to
let the stochastic energy exchange rate be $\sqrt{ \min \{ E_{i},
  E_{i+1} \}}$ for all small $\min \{ E_{i}, E_{i+1} \}$. Hence we
assume $R(E_{i}, E_{i+1}) = \min \{K, \sqrt{\min(E_{i}, E_{i+1})} \} $
in this paper for the sake of simplicity. We idealize the energy exchange in a particle-particle
collision by choosing it as a ``random halves'' energy
redistribution. Our numerical simulation shows that this is a
reasonable choice, as the amount of redistributed energy has a strictly
positive probability density function. In addition, the system is coupled with two heat baths and
we prescribe a similar rule for the energy exchange with the heat
bath. This gives rise to
the stochastic energy exchange model as will be described in the next subsection.

\subsection{Description of the stochastic energy exchange model}\label{modeldesc}
Now we give a precise description of the stochastic energy
exchange model. Consider a chain of $N$ lattice sites connected to two heat
baths at the ends. The energy at each site is denoted by $E_{1},
\cdots, E_{N}$, respectively. The temperature of heat baths are
$T_{L}$ and $T_{R}$. An exponential clock is
associated with each pair of adjacent sites. The rate of the clock
depends on the energy at both sites, denoted by $R(E_{i},
E_{i+1})$. When the clock rings, the energy at two sites are pooled
together and redistributed randomly as
\begin{equation}
\label{exchange}
  (E_{i}', E_{i+1}') = ( p(E_{i} + E_{i+1}), (1-p)(E_{i} + E_{i+1})) \,,
\end{equation}
where $p$ is a uniform random variable distributed on $(0, 1)$ that is
independent of everything else. In
addition, an exponential clock is associated with the first (resp. last) site and
the left (resp. right) heat bath, whose rate is $R( T_{L}, E_{1})$
(resp. $R(E_{N}, T_{R})$) for the same rate function used above. When the clock rings, the energy at the
first (resp. last) site exchanges energy with an exponential random
variable :
$$
  E_{1}' = p( E_{1} + X_{L}) \quad \mbox{( resp. } E_{N}' = p(E_{N} +
  X_{R}) )\,,
$$
where $p$ is a uniform random variable on $(0, 1)$ that is independent
of everything else, $X_{L}$
(resp. $X_{R}$) is an exponential random variable with mean $T_{L}$
(resp. $T_{R}$). All exponential clocks are assumed to be mutually
independent. We remark that the uniform random variable $p$ is chosen
to simplify the proof. Our method works for other choices of $p$ with uniformly positive and bounded density on $(0, 1)$. 

The rate $R(E_{i}, E_{i+1})$, called the {\it stochastic energy
  exchange rate} between sites $i$ and $i+1$, has the following form:
$$
  R(E_{i}, E_{i+1}) = \min \{K, \sqrt{\min(E_{i}, E_{i+1})} \} \,,
$$
where $K \gg 1$ is a sufficiently large constant. As explained above, the
stochastic energy exchange rate is assumed to be the square
root of the minimum of site energies when either of the site energies is sufficiently small. The maximum of
stochastic energy exchange rate is set as $K < \infty$ for technical
reasons. Without such an assumption, the Lyapunov function is not in
the domain of the infinitesimal generator of the Markov chain, which imposes certain
technical complexity \cite{meyn1993stability}. As the aim of this paper is to show that the
property of the rate function at low energy leads to polynomial rate of convergence
to NESS, we choose to cap the energy exchange rate by $K$. $K$ is
assumed to be sufficiently large so that it will not significantly
affect the dynamics at any ``normal configuration''. In particular, we
assume $K \gg T_{L}, T_{R}$.

It is easy to see from the description that this model generates a
Markov jump process $\mathbf{E}_{t} = (E_{1}(t), \cdots, E_{N}(t))$ on
$\mathbb{R}^{N}_{+}$. For any measurable function $f$, the infinitesimal generator of $\mathbf{E}_{t}$ is  
\begin{eqnarray*}
&&\mathcal{L} f(E_{1}, \cdots, E_{N}) \\
& =  & \sum_{i = 1}^{N - 1}
R(E_{i}, E_{i+1})\int_{0}^{1} \left \{f(E_{1}, \cdots, E_{i-1}, p(E_{i} + E_{i+1}), (1-p)(E_{i}
+ E_{i+1}), E_{i+2}, \cdots, E_{N}) \right .\\
&&\left . - f(E_{1}, \cdots, E_{N}) \right \}
\mathrm{d}p \\
&&+ R(T_{L}, E_{1})\int_{0}^{\infty} \int_{0}^{1} \left \{ \frac{1}{T_{L}}
e^{-s/T_{L}}f(p(E_{1} + s), E_{2}, \cdots, E_{N})  - f(E_{1}, \cdots,
   E_{N})\right \}\mathrm{d}p
\mathrm{d}s   \\
&&+ R(E_{N}, T_{R})\int_{0}^{\infty} \int_{0}^{1}\left \{ \frac{1}{T_{R}}
e^{-s/T_{R}}f(E_{1}, \cdots, E_{N-1}, p(E_{N} + s)) - f( E_{1},
\cdots, E_{N}) \right \} \mathrm{d}p
\mathrm{d}s 
\end{eqnarray*}
We denote the transition kernel of $\mathbf{E}_{t}$ by $P^{t}(\mathbf{E},
\cdot)$, where $\mathbf{E} \in \mathbb{R}^{N}_{+}$. The left and right
operator generated by $P^{t}$ are
$$
  (P^{t} \zeta)(\mathbf{E}) = \int_{\mathbb{R}^{N}_{+}} P^{t}(
  \mathbf{E}, \mathrm{d}\mathbf{x}) \zeta (\mathbf{x})
$$
for a measurable function $\zeta( \mathbf{E})$ on $\mathbb{R}^{N}_{+}$,
and
$$
  (\mu P^{t})(B) = \int_{\mathbb{R}^{N}_{+}} P^{t}( \mathbf{E}, B)
  \mu( \mathrm{d} \mathbf{E})
$$
for a probability measure $\mu$ on $\mathbb{R}^{N}_{+}$. We also use
notations $\mathbb{P}_{\mathbf{E}}$ and $\mathbb{E}_{\mathbf{E}}$ for
conditional probability and conditional expectation with respect to
the initial condition $\mathbf{E}_{0} = \mathbf{E}$. 

In this paper we will use energy exchange events frequently. The event that the exponential clock between $E_{i-1}$ and
$E_{i}$ rings at a time $t$ (resp. at a stopping time $\tau$) is denoted
by $\mathcal{C}_{i}(t)$ (resp. $\mathcal{C}_{i}(\tau)$) for all $i =
1, \cdots,  N + 1$.  In addition, for the
sake of simplicity we denote $E_{0} = T_{L}$ and $E_{N+1} =
T_{R}$.


\subsection{Main Result}
To state our main result precisely, the following functions and measure classes are
necessary. Let
$$
  W( \mathbf{E}) = \sum_{ i = 1}^{N} E_{i} \,.
$$
For any $0 < \eta \ll 1$, let
$$
  V( \mathbf{E}) = V_{\eta}( \mathbf{E}) = \sum_{m = 1}^{N} \sum_{i = 1}^{N-m + 1} ( \sum_{j
    = 0}^{m - 1} E_{i + j})^{a_{m} \eta - 1} \,,
$$
where $a_{m } = 1 - (2^{m - 1} - 1)/(2^{N} - 1)$ for $m = 1, \cdots, N$. Note that $0 < a_{m} < 1$ for each $m$, hence all powers $a_{m}
\eta - 1$ are negative. Let $\mathcal{M}_{\eta}$ be the collection of probability measure $\mu$ on
$\mathbb{R}^{N}_{+}$ such that
$$
  \int_{\mathbb{R}^{N}_{+}} (W( \mathbf{E}) +V( \mathbf{E}) )\mu(
  \mathrm{d} \mathbf{E}) < \infty. 
$$

$\mathcal{M}_{\eta}$ covers a large class of probability measures. For
example, if $\mathbf{X}$ is a random energy configuration that has
finite expectation and its density at the neighborhood of the ``boundary'' $\{ \mathbf{E}
= (E_{1}, \cdots, E_{N}) \,|\,  E_{i}  = 0 \mbox{ for some } i = 1
\sim N \}$ is uniformly bounded, then the probability measure induced by $\mathbf{X}$ belongs
to $\mathcal{M}_{\eta}$ for any sufficiently small $\eta > 0$. 

\medskip

We have the following results regarding the stochastic stability of
$\mathbf{E}_{t}$. 

{\bf Theorem 1 (Polynomial contraction of the Markov operator). } {\it For any
$\gamma > 0$, there exists $\eta > 0$ such that 
$$
  \lim_{t\rightarrow \infty} t^{2 - \gamma} \| \mu P^{t} - \nu P^{t}
  \|_{TV} = 0 
$$
for any $\mu$, $\nu \in \mathcal{M}_{\eta}$, where $\| \cdot \|_{TV}$ is the total variation norm.
}

{\bf Theorem 2 (Properties of the invariant
  measure).} {\it There exists a unique invariant measure $\pi$ that is
absolutely continuous with respect to the Lebesgue measure. In
addition, for any $\gamma > 0$ there exists $\eta > 0$ such that
$$
  \lim_{t\rightarrow \infty} t^{1 - \gamma} \| \mu P^{t} - \pi \|_{TV} = 0
$$
for any $\mu \in \mathcal{M}_{\eta}$.
}

Note that $\pi$ may not be in $\mathcal{M}_{\eta}$, which leads to a
different rate in {\bf Theorem 2}. 

A corollary of polynomial contraction of Markov operator is the rate
of correlation decay. 

{\bf Theorem 3 (Polynomial correlation decay). } {\it Let functions
  $\xi$ and $\zeta $ be in $L^{\infty}(
\mathbb{R}^{N}_{+})$. For any $\gamma
  > 0$, there exists $\eta > 0$ such that for any $\mu \in \mathcal{M}_{\eta}$
\begin{eqnarray*}
  &&\left|\int_{\mathbb{R}^{N}_{+} } (P^{t} \zeta)(
  \mathbf{E}) \xi( \mathbf{E}) \mu( \mathrm{d}\mathbf{E}) - \int_{\mathbb{R}^{N}_{+}} (P^{t}\zeta)( \mathbf{E}) \mu(
  \mathrm{d}\mathbf{E}) \int_{\mathbb{R}^{N}_{+}} \xi( \mathbf{E}) \mu(
  \mathrm{d} \mathbf{E}) \right| \ \\
&\leq& O(1) \cdot \|\xi\|_{L^{\infty}} \
\|\zeta\|_{L^{\infty}} \ \left(\frac{1}{t^{2-\gamma}} \right) 
  \end{eqnarray*}
as $t \to \infty$, where the $O(1)$ term depends on $\gamma, N$, and $\mu$. 
}

Finally, the following proposition gives the lower bound of
convergence speed.

{\bf Proposition 4 (Lower bound of convergence)}
{\it
There exists a probability measure $\nu$ satisfying $\mathrm{d}\nu \ll
\mathrm{d}\pi$ such that
$$
  \| \nu P^{t} - \pi \|_{TV} \geq c (t + 1)^{-1 - \gamma }
$$
for any sufficiently small $\gamma > 0$. 
}

\section{Approach towards polynomial ergodicity}\label{approach}
The purpose of this section is to introduce a general approach, called the
induced chain method, towards the polynomial ergodicity of a Markov
process. We introduce this method under the generic setting, as it can
be potentially applied to other models. Our aim is to make this
section self-contained. When citing results from references, we will
explain how statements of those theorems are rephrased.  

Throughout this
section, we let $\Psi_{n}$ be a discrete-time Markov chain on a
measurable space $(X, \mathcal{B})$. The transition kernel of
$\Psi_{n}$ is $\mathcal{P}(x, \cdot)$. 

For $A \in \mathcal{B}$, we let $\tau_{A}$ be the first passage time
to $A$:
$$
  \tau_{A} = \inf \{n > 0 \, | \, \Psi_{n} \in A \} \,.
$$
A set $A \in \mathcal{B}$ is said to be {\it accessible} if
$\mathbb{P}_{x}[ \tau_{A} < \infty]  = 1$ for every $x \in X$. 

We say a Markov chain is {\it
  irreducible} with respect to a measure $\phi$ on $\mathcal{B}$ if
for any $ A \in \mathcal{B}$ with $\phi(A) > 0$ and any $x \in X$,
there exists $n$ such that $\mathcal{P}^{n}(x,A) > 0$. In other word, every set
with positive $\phi$-measure is accessible. We refer readers to Chapter 4 of
\cite{meyn2009markov} for the well-definedness of irreducibility. In
fact, if $\Psi_{n}$ is irreducible with respect to some measure
$\phi$, then there exists a ``maximal irreducible measure'' $\psi$
that is unique up to equivalence class.

\subsection{Splitting, Coupling, and moments of return times}

By the polynomial ergodicity of $\Psi_{n}$, we mean the polynomial rate of
contraction of the Markov operator $\mathcal{P}$, the polynomial rate
of convergence to the invariant measure of $\Psi_{n}$, and the polynomial rate of
correlation decay of $\Psi_{n}$. To prove the polynomial ergodicity,
the bound on return times to a certain {\it uniform reference set} $\mathfrak{C}
\in \mathcal{B}$ is crucial. 

\begin{dfn}
A set $\mathfrak{C} \in \mathcal{B}$ is said to be a uniform reference
set if it satisfies
$$
  \inf_{x \in \mathfrak{C}} \mathcal{P}(x, \cdot) \geq \delta \theta(\cdot) \,,
$$
where $\theta$ is a probability measure on $(X, \mathcal{B})$ and
$\delta$ is a strictly positive real number.
\end{dfn}

A uniform reference set is a special case of {\it small set} or {\it
  petite set} defined in \cite{meyn2009markov}.

We call $\mathfrak{C}$ a uniform reference set because processes starting
from $\mathfrak{C}$ have some uniform ``common future''. If such a uniform
reference set $\mathfrak{C}$ exists, $\Psi_{n}$ can be converted to a
new process $\tilde{\Psi}_{n}$ on a modified state space $\tilde{X} =
X \cup \mathfrak{C}_{1}$, where $\mathfrak{C}_{1}$ is an identical
copy of $\mathfrak{C}_{0}:= \mathfrak{C}$. $\mathcal{B}$ can be extended to
$\mathcal{\tilde{B}}$ on $\tilde{X}$ accordingly. Then we can split a
probability measure $\mu$ on $(X, \mathcal{B})$ to a probability  measure $\mu^{*}$ on $(\tilde{X},
\mathcal{\tilde{B}})$:
$$
  \left \{
\begin{array}{ll}
 & \mu^*|_X = (1 - \delta)\ \mu|_{\mathfrak{C}_0} + \mu|_{X \setminus \mathfrak{C}_0}\\
&  \mu^*|_{\mathfrak{C}_1} = \delta \ \mu|_{\mathfrak{C}_0}\ , \quad \mathfrak{C}_0 \cong \mathfrak{C}_1  \mbox{ via the natural
identification }.
\end{array}
\right .
$$

Then we can ``lift'' $\Psi_{n}$ to a new Markov process
$\tilde{\Psi}_{n}$ on $\tilde{X}$ with a transition kernel
$\tilde{\mathcal{P}}(x, \cdot)$:

$$
  \left \{ 
\begin{array}{cl}
 \mathcal{\tilde P}(x, \cdot) = (\mathcal{P}(x, \cdot))^* &  x \in X \setminus \mathfrak{C}_0\\
 \mathcal{\tilde P}(x, \cdot) = [(\mathcal{P}(x, \cdot))^{*}
 - \delta {\theta^{*}(\cdot)}]/(1 - \delta) 
 & x \in \mathfrak{C}_0 \\
\mathcal{\tilde P}( x, \cdot) = {\theta^{*}}(\cdot) & x \in \mathfrak{C}_1
\end{array}
\right .
$$

It is straightforward to check that $\tilde{\Psi}_{n}$ has an atom
$\alpha := \mathfrak{C}_{1}$, i.e., $\tilde{\mathcal{P}}(x, \cdot)$ is the same for all $x \in
\alpha$. In addition, if the initial distribution of
$\tilde{\Psi}_{n}$ is splited from some $\mu$ on $(X, \mathcal{B})$ as
described above, $\tilde{\Psi}_{n}$ projects to
$\Psi_{n}$ through the natural projection from $\tilde{X}$ to
$X$. This transformation is called the Nummelin splitting \cite{nummelin1978splitting}. We
refer to \cite{nummelin1978splitting, meyn2009markov} for the details.

\medskip

If $\Psi_{n}$ admits an accessible uniform reference set, many
results about the stochastic stability of $\Psi_{n}$ can be implied by
estimates about $\tau_{\mathfrak{C}}$. To state the results, we need the
aperiodicity of $\Psi_{n}$.

Let $\mathfrak{C}$ be a uniform reference set. Define $E_{\mathfrak{C}} \subset
\mathbb{Z}^{+}$ be the set of intergers such that $M \in E_{\mathfrak{C}}$ if and
only if
$$
  \inf_{x \in \mathfrak{C}} \mathcal{P}^{M}(x, \cdot) \geq \delta
  \theta(\cdot) \quad, \quad \theta(\mathfrak{C}) > 0
$$
for a probability measure $\theta$ and a strictly positive number
$\delta$. The greatest common divisor of $E_{\mathfrak{C}}$ is called the {\it
  period} of $\Psi_{n}$ with respect to $\mathfrak{C}$. 

\begin{dfn}
An irreducible Markov process $\Psi_{n}$ is said to be {\it aperiodic}
if the period of $\Psi_{n}$ with respect to any uniform reference set
$\mathfrak{C}$ is $1$.
\end{dfn}

\begin{dfn}
An irreducible Markov process $\Psi_{n}$ is said to be {\it strongly aperiodic} if $\Psi_{n}$ admits
a uniform reference set $\mathfrak{C}$ such that $\theta( \mathfrak{C}) > 0$. 
\end{dfn}

If $\Psi_{n}$ is strongly aperiodic, $\Psi_{n}$ must be aperiodic such that
no cyclic decomposition is possible. We refer to Theorem 5.4.4 of
\cite{meyn2009markov} for the precise result about the cyclic
decomposition for Markov processes on measurable state spaces. 

\begin{dfn}
A probability measure $\pi$ is said to be {\it invariant} if
$$
  \pi(A) = \int_{X} \pi( \mathrm{d}x ) \mathcal{P}(x, A)
$$
for any $A \in \mathcal{B}$. 
\end{dfn}

\begin{thm}
\label{exist}
Let $\Psi_{n}$ be an irreducible Markov chain on $(X, \mathcal{B})$ with transition
kernel $\mathcal{P}$. If $\mathfrak{C} \in \mathcal{B}$ is an
accessible uniform reference set such that 
$$
  \sup_{x \in \mathfrak{C}} \mathbb{E}_{x}[ \tau_{\mathfrak{C}}] <
  \infty \,,
$$
then there exists an invariant probability measure $\pi$. 
\end{thm}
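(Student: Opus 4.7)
The plan is to invoke the Nummelin splitting constructed in the preceding subsection to reduce the problem to the classical construction of an invariant measure from a recurrent atom. Lifting $\Psi_{n}$ to $\tilde{\Psi}_{n}$ on $\tilde{X}=X\cup \mathfrak{C}_{1}$ turns $\alpha := \mathfrak{C}_{1}$ into a genuine atom, since $\tilde{\mathcal{P}}(x,\cdot)\equiv \theta^{*}(\cdot)$ for every $x\in\alpha$. Because the natural projection $\iota: \tilde{X}\to X$ intertwines $\tilde{\Psi}_{n}$ with $\Psi_{n}$, it suffices to produce an invariant probability measure $\tilde{\pi}$ on $\tilde{X}$; its pushforward $\pi=\tilde{\pi}\circ \iota^{-1}$ is then automatically invariant for $\mathcal{P}$.

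To construct $\tilde{\pi}$, I would use the Kac-type excursion formula
$$
  \tilde{\pi}(A) \;=\; \frac{1}{\mathbb{E}_{\alpha}[\tau_{\alpha}]}\,\mathbb{E}_{\alpha}\!\left[\sum_{n=0}^{\tau_{\alpha}-1} \mathbf{1}_{A}(\tilde{\Psi}_{n})\right],\qquad A\in \tilde{\mathcal{B}}.
$$
Invariance $\tilde{\pi}\tilde{\mathcal{P}}=\tilde{\pi}$ then follows from the strong Markov property at time $1$: expanding $\tilde{\pi}\tilde{\mathcal{P}}(A)$ and reindexing the sum $n\mapsto n+1$, the boundary terms at $n=0$ and $n=\tau_{\alpha}$ both evaluate to $\mathbf{1}_{A}(\alpha)$ under $\mathbb{P}_{\alpha}$ and therefore cancel. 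Normalization to a probability measure requires $\mathbb{E}_{\alpha}[\tau_{\alpha}]<\infty$, which is the key quantitative input.

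The main obstacle is thus to establish $\mathbb{E}_{\alpha}[\tau_{\alpha}]<\infty$ from the hypothesis $M := \sup_{x\in \mathfrak{C}}\mathbb{E}_{x}[\tau_{\mathfrak{C}}]<\infty$. I would decompose an excursion from $\alpha$ into three pieces: the single step out of $\alpha$, which samples a state according to $\theta^{*}$; the subsequent first passage of $\tilde{\Psi}_{n}$ to $\mathfrak{C}_{0}$; and finally a geometric (parameter $\eta$) number of $\mathfrak{C}_{0}$-to-$\mathfrak{C}_{0}$ excursions, each of expected length at most $M$ by hypothesis, before the splitting triggers a jump into $\alpha$. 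The third contribution is bounded by $M/\eta$. The first-passage piece is controlled using accessibility of $\mathfrak{C}$ combined with $M$, via a dissection of the path at successive visits to $\mathfrak{C}$ together with the strong Markov property; aperiodicity, in the strongly aperiodic form guaranteed by the splitting, rules out any cyclic degeneracy that could force an infinite first hit. Assembling the three pieces yields $\mathbb{E}_{\alpha}[\tau_{\alpha}]<\infty$, and projecting $\tilde{\pi}$ through $\iota$ produces the desired invariant probability measure $\pi$ on $(X,\mathcal{B})$.
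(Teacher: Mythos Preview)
The paper does not give its own proof of this theorem: it is quoted verbatim as Theorem~10.0.1 of Meyn--Tweedie and used as a black box. So there is nothing to compare your argument against in the paper proper. Your outline---Nummelin split to create an atom $\alpha$, Kac's excursion formula to build $\tilde\pi$, then project---is exactly the standard route in the cited reference, and the Kac formula step and the geometric decomposition of $\tau_\alpha$ into $\mathfrak{C}$-excursions are correct.

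There is one soft spot. Your ``first-passage piece'' is the control of $\mathbb{E}_{\theta}[\sigma_{\mathfrak{C}}]$, the expected time for a chain started at the minorizing law $\theta$ to reach $\mathfrak{C}$. You write that this is handled by ``accessibility of $\mathfrak{C}$ combined with $M$, via a dissection of the path at successive visits to $\mathfrak{C}$,'' but accessibility only gives almost-sure finiteness, and there are no prior visits to $\mathfrak{C}$ to dissect at. The clean argument instead uses the minorization itself: since $\eta\,\theta(\cdot)\le \mathcal{P}(x,\cdot)$ for any fixed $x\in\mathfrak{C}$, one has
\[
\eta\,\mathbb{E}_{\theta}[\sigma_{\mathfrak{C}}]\;\le\;\int \mathcal{P}(x,\mathrm{d}y)\,\mathbb{E}_{y}[\sigma_{\mathfrak{C}}]\;=\;\mathbb{E}_{x}[\tau_{\mathfrak{C}}]-1\;\le\;M-1,
\]
so $\mathbb{E}_{\theta}[\sigma_{\mathfrak{C}}]\le (M-1)/\eta$. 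With this in hand, $\mathbb{E}_{\alpha}[\tau_{\alpha}]\le 1+(M-1)/\eta + M/\eta<\infty$ (the $\mathfrak{C}_0$-excursions have expected length at most $M/(1-\eta)$, not $M$, but this is immaterial). Also, aperiodicity plays no role in the \emph{existence} of $\pi$; it enters only for convergence and uniqueness, so your invocation of it here is superfluous.
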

\begin{proof}
Define 
$$
  L(x, A) = \mathbb{P}_{x}[ \tau_{A} < \infty] \,.
$$
Then Obviously $L(x, \mathfrak{C})  = 1$ for every $x \in
\mathfrak{C}$. By Theorem 8.3.6 of \cite{meyn2009markov}, $\Psi_{n}$
is recurrent. The theorem then follows from Theorem 10.0.1 of
\cite{meyn2009markov}. 
\end{proof}

The polynomial ergodicity of $\Psi_{n}$
follows from the finiteness of moments of $\tau_{\mathfrak{C}}$.   

\begin{thm}[Theorem 2.6 and 2.7 in \cite{nummelin1983rate}]
\label{thm31}
Let $\Psi_{n}$ be an aperiodic Markov chain on $(X, \mathcal{B})$ with transition
kernel $\mathcal{P}$. Let $\pi$ be an invariant probability measure of
$\Psi_{n}$. If $\mathfrak{C} \in \mathcal{B}$ is an accessible uniform
reference set and 
$$
  \sup_{x\in \mathfrak{C}} \mathbb{E}_{x}[ \tau_{\mathfrak{C}}^{\beta}] < \infty
$$
for some $\beta > 0$, then for any probability measures $\mu, \nu$ on
$X$ that satisfy
$$
  \mathbb{E}_{\mu}[ \tau_{\mathfrak{C}}^{\beta}] < \infty \quad \mathrm{and}
  \quad  \mathbb{E}_{\nu}[ \tau_{\mathfrak{C}}^{\beta}] < \infty \,,
$$ 
we have
$$
  \lim_{n \rightarrow \infty} n^{\beta}\| \mu \mathcal{P}^{n} - \nu
  \mathcal{P}^{n} \|_{TV} = 0 \,.
$$
In addition, if
$\beta > 1$, then
$$
  \lim_{n \rightarrow \infty} n^{\beta - 1}\| \mu \mathcal{P}^{n} - \pi \|_{TV} = 0 
$$
for any $\mu$ which satisfies
$$
  \mathbb{E}_{\mu}[ \tau_{\mathfrak{C}}^{\beta}] < \infty \,.
$$
\end{thm}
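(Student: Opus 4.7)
The plan is to reduce the theorem to a coupling argument on a split chain that has a true atom. First I would apply the Nummelin splitting from the preceding subsection to lift $\Psi_n$ to a chain $\tilde\Psi_n$ on $\tilde X = X \cup \mathfrak{C}_1$ with atom $\alpha := \mathfrak{C}_1$. Since the projection $\tilde X \to X$ intertwines the two kernels, $\|\mu \mathcal{P}^n - \nu \mathcal{P}^n\|_{TV}$ coincides with the corresponding split-chain distance, and the moment hypothesis on $\tau_\mathfrak{C}$ translates to an analogous moment hypothesis on $\tau_\alpha$: each visit of $\tilde\Psi_n$ to $\mathfrak{C}_0$ triggers an independent Bernoulli($\eta$) attempt to enter $\alpha$, so $\tau_\alpha$ is a geometric thinning of the $\tau_\mathfrak{C}$-return process, and standard estimates preserve the $L^\beta$ bound.

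The core estimate uses a coupling. Run two independent copies $\tilde\Psi_n^{(1)}$ and $\tilde\Psi_n^{(2)}$ from $\mu^{*}$ and $\nu^{*}$ until both simultaneously occupy $\alpha$ at some time $T$; because $\alpha$ is an atom, their next-step laws coincide and the copies may be merged to agree for all subsequent times. The coupling inequality gives
\[
\|\mu \mathcal{P}^n - \nu \mathcal{P}^n\|_{TV} \;\le\; \mathbb{P}[T > n],
\]
so it suffices to show $n^\beta \mathbb{P}[T>n]\to 0$. Here $T$ is the first simultaneous hit of two independent delayed renewal sequences on $\mathbb{N}$; strong aperiodicity forces $\mathbb{P}_\alpha[\tau_\alpha = 1] > 0$, which rules out cyclic misalignment and allows a classical renewal-convolution estimate to yield $\mathbb{E}[T^\beta] \lesssim \mathbb{E}_\mu[\tau_\mathfrak{C}^\beta] + \mathbb{E}_\nu[\tau_\mathfrak{C}^\beta] + \sup_{x \in \mathfrak{C}} \mathbb{E}_x[\tau_\mathfrak{C}^\beta]$. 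Markov's inequality then gives $\mathbb{P}[T>n] = O(n^{-\beta})$, and the sharper $o(n^{-\beta})$ follows by dominated convergence applied to $n^\beta \mathbf{1}_{\{T>n\}} \le T^\beta$.

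For the second assertion I would feed $\nu = \pi$ into the same coupling. Theorem \ref{exist} guarantees $\pi$, and the excursion representation
\[
\pi(A) \;=\; \frac{1}{\mathbb{E}_{\pi|_\mathfrak{C}}[\tau_\mathfrak{C}]} \int_{\mathfrak{C}} \pi(\mathrm{d}x)\, \mathbb{E}_x\!\left[ \sum_{k=0}^{\tau_\mathfrak{C}-1} \mathbf{1}_A(\Psi_k) \right]
\]
then shows $\mathbb{E}_\pi[\tau_\mathfrak{C}^{\beta-1}] < \infty$ from the uniform hypothesis, the loss of one power accounting for the exponent dropping from $\beta$ to $\beta-1$ in the convergence rate to $\pi$. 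The hardest step will be the renewal-convolution estimate for $\mathbb{P}[T>n]$: naive Markov bounds on each marginal renewal process only yield $n^{-\beta/2}$, and recovering the full exponent $\beta$ requires carefully exploiting the independence of the two copies together with aperiodicity, exactly as in the polynomial renewal theorem of Nummelin and Tuominen.
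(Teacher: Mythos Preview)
Your proposal is correct and follows essentially the same route as the paper: Nummelin splitting to create an atom $\alpha$, independent coupling of two split chains, reduction to the first simultaneous renewal time $T$ of two delayed renewal sequences, Lindvall-type control of $\mathbb{E}[T^\beta]$, and the elementary fact $\mathbb{E}[Z^\beta]<\infty \Rightarrow n^\beta\mathbb{P}[Z>n]\to 0$. Two small remarks: (i) the theorem hypothesis is only aperiodicity, not strong aperiodicity, so you do not in general have $\mathbb{P}_\alpha[\tau_\alpha=1]>0$, only $\gcd\{n:\mathbb{P}_\alpha[\tau_\alpha=n]>0\}=1$---this is still enough for the Lindvall renewal estimate; (ii) for the $\pi$ case the paper argues via the stationary delay distribution of the renewal process (which has finite $(\beta-1)$-moment), whereas you invoke the Kac/excursion representation directly---both yield $\mathbb{E}_\pi[\tau_\alpha^{\beta-1}]<\infty$ and are equivalent here.
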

\begin{rem}
We rephrased statements of Theorem 2.6 and 2.7 in
\cite{nummelin1983rate} to make the notations consistent. Functions
$\psi(n)$ and $\psi_{0}(n)$ defined in \cite{nummelin1983rate} correspond to $n^{\beta - 1}$ and
$n^{\beta}$. Then by Theorem 2.7 (i) of \cite{nummelin1983rate}, 
$$
  \sup_{x\in \mathfrak{C}} \mathbb{E}_{x}[ \tau_{\mathfrak{C}}^{\beta}] < \infty
$$
implies that $\Psi_{n}$ is ergodic of order $\psi$. By Theorem 2.6, we
have
$$
  \mathbb{E}_{\pi}[ \tau_{\mathfrak{C}}^{\beta - 1}] < \infty \,.
$$
The rest of the results follows from Theorem 2.7 (iii) of \cite{nummelin1983rate}.
\end{rem}

Below we give a short self-contained proof for Theorem \ref{thm31} based on the discrete renewal
theory in \cite{lindvall2002lectures}. We refer to
\cite{nummelin1983rate, li2016polynomial} for the full details, and
\cite{hairer2010convergence} for a modern treatment of continuous time Feller
processes. 

\begin{proof}
We first apply the Nummelin splitting to $\Psi_{n}$ to
obtain $\tilde{\Psi}_{n}$ on $\tilde{X}$. $\tilde{\Psi}_{n}$ possesses
an accessible atom $\alpha$. 

Let $\tilde{\Psi}^{1}_{n}$ and $\tilde{\Psi}^{2}_{n}$ be two
independent copies of $\tilde{\Psi}_{n}$ starting from $\mu^{*}$ and
$\nu^{*}$, respectively. Let $Y^{1}_{0}, Y^{1}_{1}, Y^{1}_{2}, \cdots$
and $Y^{2}_{0}, Y^{2}_{1}, Y^{2}_{2}, \cdots$ be the passage times to
$\alpha$ for the two independent processes, respectively. Since $\alpha$
is an atom, $\{Y^{1}_{i}\}_{i = 1}^{\infty}$ and $\{Y^{2}_{i}\}_{i =
  1}^{\infty}$ are i.i.d random variables. Therefore 
$$
    S^{1}_{n} := \sum_{i = 0}^{n} Y^{1}_{i}, \mathrm{and} \quad  S^{2}_{n} :=
  \sum_{i = 0}^{n} Y^{2}_{i}
$$
form two delayed renewal processes. $Y^{1}_{0}$ and $Y^{2}_{0}$ are called the
{\it delay distributions} of the renewal processes. Let $T$ be the first simultaneous renewal
time
$$
  T = \inf \{ m \geq 0 \,|\, S^{1}_{k_{1}} = S^{2}_{k_{2}} = m \mbox{ for
    some } k_{1}, k_{2 } \}\,.
$$
Since $\Psi_{n}$ (and $\tilde{\Psi}_{n}$) is aperiodic, the renewal
processes $S^{1}_{n}$ and $S^{2}_{n}$ are aperiodic.

Then it is easy to see that after $T$, $\tilde{\Psi}^{1}_{n}$ and
$\tilde{\Psi}^{2}_{n}$ become indistinguishable. $T$ is called the
{\it coupling time} of $\tilde{\Psi}^{1}_{n}$ and
$\tilde{\Psi}^{2}_{n}$. It is well-known that 
$$
 \| \mu \mathcal{P}^{n} - \nu
\mathcal{P}^{n} \|_{TV}  \leq \| \mu^{*} \tilde{\mathcal{P}}^{n} - \nu^{*}
\tilde{\mathcal{P}}^{n} \|_{TV}  \leq 2\mathbb{P}_{\mu^{*}, \nu^{*}}[T > n] \,.
$$

Therefore it is sufficient to show the polynomial tail of
$\mathbb{P}[T > n]$. Instead of the polynomial tail, we first prove
the finiteness of moments of $T$ by using the delayed renewal processes
constructed above and the following two lemmas.

\medskip

\begin{lem}
\label{nummelin}
Let $\Psi_{n}$ be an aperiodic Markov chain on $(X, \mathcal{B})$ with transition
kernel $\mathcal{P}$. If $\mathfrak{C} \in \mathcal{B}$ is an accessible uniform
reference set and 
$$
  \sup_{x\in \mathfrak{C}} \mathbb{E}_{x}[ \tau_{\mathfrak{C}}^{\beta}] < \infty
$$
for some $\beta > 0$, then for any probability measure $\mu$ such
that
$$
  \mathbb{E}_{\mu}[ \tau_{\mathfrak{C}}^{\beta}] < \infty \,,
$$
there exists a constant $C < \infty$, such
that
$$
  \mathbb{E}_{\mu^{*}}[ \tau_{\alpha}^{\beta}] \leq C
  \mathbb{E}_{\mu}[ \tau_{\mathfrak{C}}^{\beta}] < \infty \,.
$$
\end{lem}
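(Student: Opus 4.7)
The plan is to recast $\tau_\alpha$ in the split chain as a geometrically thinned sequence of return times to $\mathfrak{C}$ in the original chain, and then bound its $\beta$-moment by a Minkowski/subadditivity inequality combined with the exponential decay of the thinning probabilities.

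First, I would recall the key consequence of the Nummelin splitting: on an enlarged probability space, the projected chain $\Psi_n$ can be coupled with an auxiliary i.i.d.\ sequence of Bernoulli$(\eta)$ random variables, one attached to each visit of $\Psi_n$ to $\mathfrak{C}$, such that $\tilde{\Psi}_n \in \alpha = \mathfrak{C}_1$ precisely at those visit times at which the corresponding coin is a success. Writing $0 < T_1 < T_2 < \cdots$ for the successive visit times of $\Psi_n$ to $\mathfrak{C}$, this yields $\tau_\alpha = T_G$ where $G$ is geometric of parameter $\eta$ and independent of the sequence $(T_k)$. In particular $\tau_\alpha = \sum_{k=1}^G \xi_k$ with $\xi_k := T_k - T_{k-1}$ and $T_0 := 0$.

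Second, I would obtain moment bounds on the gaps. Under $\mathbb{P}_{\mu^*}$ the first gap $\xi_1$ has the same distribution as $\tau_\mathfrak{C}$ under $\mathbb{P}_\mu$, so $\mathbb{E}[\xi_1^\beta] = \mathbb{E}_\mu[\tau_\mathfrak{C}^\beta] =: K$. By the strong Markov property at $T_{k-1}$, the fact that $\Psi_{T_{k-1}} \in \mathfrak{C}$ gives
\[
  \mathbb{E}\bigl[\xi_k^\beta \,\big|\, \mathcal{F}_{T_{k-1}}\bigr] \leq M := \sup_{x \in \mathfrak{C}} \mathbb{E}_x\bigl[\tau_\mathfrak{C}^\beta\bigr] < \infty
\]
for every $k \geq 2$. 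Third, I would bound $\|\tau_\alpha\|_\beta$ by writing $\tau_\alpha = \sum_{k\geq 1} \xi_k \mathbf{1}_{\{G \geq k\}}$ and, for $\beta \geq 1$, invoking Minkowski's inequality together with the independence of $G$ from $(\xi_k)$:
\[
  \|\tau_\alpha\|_\beta \leq \sum_{k=1}^\infty \|\xi_k\|_\beta \, \mathbb{P}[G \geq k]^{1/\beta} \leq K^{1/\beta} + M^{1/\beta} \sum_{k=2}^\infty (1-\eta)^{(k-1)/\beta}.
\]
The geometric series converges to a constant depending only on $\eta$ and $\beta$, which gives $\mathbb{E}_{\mu^*}[\tau_\alpha^\beta] \leq C_1(K + M)$. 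Absorbing $M$ into $K$ via the trivial lower bound $K \geq 1$ (since $\tau_\mathfrak{C} \geq 1$ always) yields the required $\mathbb{E}_{\mu^*}[\tau_\alpha^\beta] \leq C\,\mathbb{E}_\mu[\tau_\mathfrak{C}^\beta]$ with $C = C(\eta,\beta,M)$. For $0 < \beta < 1$ the subadditivity $(a+b)^\beta \leq a^\beta + b^\beta$ replaces Minkowski and the same conclusion follows from the convergent sum $\sum_{k\geq 2}(1-\eta)^{k-1} = (1-\eta)/\eta$.

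The main technical nuisance is the bookkeeping at the initial step: when $\mu(\mathfrak{C}) > 0$, the split measure $\mu^*$ places mass directly on $\alpha$, so under $\mathbb{P}_{\mu^*}$ the chain may already be at the atom at time $0$. One must set up the coin-flip interpretation carefully on this enlarged space so that $\xi_1$ still has the distribution of $\tau_\mathfrak{C}$ under $\mathbb{P}_\mu$ and that the Markov renewal structure is preserved across the distinguished copies $\mathfrak{C}_0$ and $\mathfrak{C}_1$. Once this setup is correctly executed, the geometric-thinning estimate above closes the proof.
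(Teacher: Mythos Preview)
The paper does not prove this lemma---it simply cites it from Nummelin--Tuominen---so there is no paper proof to compare against. Your overall strategy (decompose $\tau_\alpha$ as a geometrically-indexed sum of $\mathfrak{C}$-return times of the projected chain, then control moments via Minkowski/subadditivity) is the right one and matches the standard argument.

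However, there is a genuine gap in your independence claim. In the Nummelin split chain the coins $(I_k)$ at successive visits to $\mathfrak{C}$ are indeed marginally i.i.d.\ Bernoulli$(\eta)$, but they are \emph{not} independent of the projected trajectory $(\Psi_n)$: the coin $I_k$ dictates whether the transition out of $T_k$ uses $\theta$ or the remainder kernel $(\mathcal{P}(x,\cdot)-\eta\theta)/(1-\eta)$, and this directly affects $\xi_{k+1}$. A two-state example makes this concrete: take $\mathfrak{C}=\{a\}$, $\mathcal{P}(a,\cdot)=\tfrac12\delta_a+\tfrac12\delta_b$, $\mathcal{P}(b,\cdot)=\delta_a$, $\theta=\delta_a$, $\eta=\tfrac12$; then from $\mathfrak{C}_0$ the split chain goes deterministically to $b$ while from $\mathfrak{C}_1$ it stays at $a$, so $\xi_{k+1}$ equals $2$ or $1$ according to whether $I_k=0$ or $1$. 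Consequently your factorization $\|\xi_k\mathbf 1_{G\ge k}\|_\beta=\|\xi_k\|_\beta\,\mathbb P[G\ge k]^{1/\beta}$ is unjustified, and the displayed bound $\mathbb E[\xi_k^\beta\mid\mathcal F_{T_{k-1}}]\le M$ uses the wrong filtration (that of $\Psi$, not of $\tilde\Psi$).

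The repair is small. Work with the filtration $\mathcal F^{\tilde\Psi}_{T_{k-1}}$ of the split chain: the event $\{G\ge k\}=\{I_1=\cdots=I_{k-1}=0\}$ is measurable there, and on this event $\xi_k$ is the return time to $\mathfrak{C}$ using the remainder kernel for one step and then $\mathcal{P}$. From $\mathcal{P}(x,\cdot)=\eta\theta+(1-\eta)\cdot\text{(remainder)}$ one gets, for every $x\in\mathfrak{C}$,
\[
  \mathbb E_x\bigl[\tau_{\mathfrak{C}}^\beta\bigr]\;\ge\;(1-\eta)\,\mathbb E\bigl[\xi_k^\beta\,\big|\,I_{k-1}=0,\ \Psi_{T_{k-1}}=x\bigr],
\]
hence $\mathbb E[\xi_k^\beta\mathbf 1_{G\ge k}]\le \frac{M}{1-\eta}(1-\eta)^{k-1}=M(1-\eta)^{k-2}$. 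With this correction your Minkowski/subadditivity step goes through unchanged and yields the bound with a constant $C=C(\eta,\beta,M)$.
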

\begin{proof}
Apply Nummelin splitting to $\Psi_{n}$ with respect to
$\mathfrak{C}$. Define the stopping time $\tau =
\tau_{\mathfrak{C}_{0} \cup \alpha}$ for $\tilde{\Psi}_{n}$. Let
$\{\tau^{n}\}_{n = 1}^{\infty}$ be the sequence of iterates of $\tau$,
i.e., 
$$
  \tau^{0} = 0, \quad \tau^{1} = \tau, \quad  \tau^{n+1} = \tau^{n} +
  \tau \circ \Theta^{\tau^{n}} \,,
$$
where
$\Theta$ is the usual shift operator. Further let $Z_{n}$ be a
sequence of $\{0, 1\}$ random variables such that $Z_{n} = 1$ if and
only if $\tilde{\Psi}_{\tau^{n}} \in \alpha$. According to the
definition of $\tilde{\Psi}_{n}$, the probability of $\tilde{\Psi}_{\tau^{n}}
= \alpha$ is at least $\delta$ whenever the split chain $\tilde{\Psi}_{n}$ jumps to
$\mathfrak{C}_{0} \cup \alpha$ at the step $\tau^{n}$. Hence $Z_{n}$
is $\mathcal{F}_{n} = \sigma \{\tilde{\Psi}_{0}, \cdots, \tilde{\Psi}_{\tau^{n}}\}$ measurable and 
$$
  \mathbb{P}[Z_{n} = 1 \,|\, \mathcal{F}_{n-1} ] \geq \delta > 0. 
$$
Let $\zeta = \inf \{  n > 0 \,|\, Z_{n} = 1 \} $. Then $\tau_{\alpha} =
\tau^{\zeta}$. The lemma then follows from Lemma 3.1 (iii) of
\cite{nummelin1983rate}. From the proof of Lemma 3.1 of
\cite{nummelin1983rate} we can see that there exists a constant $C < \infty$, such
that
$$
  \mathbb{E}_{\mu^{*}}[ \tau_{\alpha}^{\beta}] \leq C
  \mathbb{E}_{\mu}[ \tau_{\mathfrak{C}}^{\beta}] < \infty \,.
$$
\end{proof}

\medskip

\begin{lem}
\label{lindvall}
Let $S^{1}_{n}$ and $S^{2}_{n}$ be the delayed renewal processes as above. If
there exists $\beta > 1$ such that $\mathbb{E}[(Y_{0}^{1})^{\beta}],
\mathbb{E}[(Y_{0}^{2})^{\beta}]$, and $\mathbb{E}[(Y_{1}^{1})^{\beta}]$
are all finite, then there exists a constant $C< \infty$ depending on $\mathbb{E}[(Y_{1}^{1})^{\beta}]$, such that
$$
  \mathbb{E}[T^{\beta}] < C ( \mathbb{E}[(Y_{0}^{1})^{\beta}] +
  \mathbb{E}[(Y_{0}^{2})^{\beta}] ) < \infty \,.
$$
In addition, there exists a delay distribution $Y^{c}_{0}$ with
$\mathbb{E}[(Y^{c}_{0})^{\beta - 1}] < \infty $ such that
$$
  S^{c}_{n} := Y^{c}_{0} + \sum_{i = 1}^{n} Y^{c}_{i} , \quad n \geq 1 
$$
is a stationary renewal process, where $\{Y^{c}_{i}\}_{i = 1}^{n}$ are
independent random variables that have the same distribution as $Y^{1}_{1}$.
\end{lem}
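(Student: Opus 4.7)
The plan is to handle the two claims separately, dealing first with the stationary delay distribution since the first claim builds on it. I would define $Y_0^c$ by prescribing its tail,
$$
  \mathbb{P}[Y_0^c > t] = \frac{1}{\mathbb{E}[Y_1^1]} \int_t^\infty \mathbb{P}[Y_1^1 > s]\,\mathrm{d}s,
$$
which is the equilibrium distribution of $Y_1^1$. Stationarity of $S^c_n$ then follows from the classical observation that the residual-life process of an aperiodic renewal sequence has this distribution as its unique invariant law, so running the process with this delay yields a time-homogeneous Palm-stationary renewal point process. A Fubini calculation gives
$$
  \mathbb{E}[(Y_0^c)^{\beta-1}] = (\beta-1)\int_0^\infty t^{\beta-2}\,\mathbb{P}[Y_0^c > t]\,\mathrm{d}t = \frac{\mathbb{E}[(Y_1^1)^\beta]}{\beta\,\mathbb{E}[Y_1^1]},
$$
and the finiteness of $\mathbb{E}[(Y_1^1)^\beta]$ is exactly what the hypothesis provides.

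For the $\beta$-th moment bound on the coupling time, I would use an Ornstein-type coupling. Let $T^\ast$ denote the coupling time of two renewal copies both started with the equilibrium delay $Y_0^c$ constructed above. A short argument reduces the general delay case to the stationary case through the deterministic bound
$$
  T \leq Y_0^1 + Y_0^2 + T^\ast,
$$
after which convexity yields $\mathbb{E}[T^\beta] \leq 3^\beta\bigl(\mathbb{E}[(Y_0^1)^\beta] + \mathbb{E}[(Y_0^2)^\beta] + \mathbb{E}[(T^\ast)^\beta]\bigr)$. What remains is to show $\mathbb{E}[(T^\ast)^\beta] \leq C\,\mathbb{E}[(Y_1^1)^\beta]$. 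This I would prove by introducing the difference walk $D_k$ between the two most recent renewal epochs: under aperiodicity, $D_k$ is an aperiodic mean-zero integer-valued random walk whose increments have finite $\beta$-th moment, and $T^\ast$ is, up to a Lorden-controlled overshoot, the first hitting time of $0$ by $D_k$. A moment bound for hitting times of aperiodic mean-zero random walks then closes the argument.

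The main obstacle is this last step, namely converting qualitative aperiodicity into a quantitative polynomial hitting-time moment for $D_k$. The subtlety is that the increments of $D_k$ need not be symmetric, so reflected-walk bounds do not apply directly. Instead, one exploits the renewal equations developed in Sections II.4--II.5 of \cite{lindvall2002lectures}, which express moments of the hitting time of $0$ in terms of moments of a single renewal interval $Y_1^1$. Once that reduction is in hand, everything else is Minkowski's inequality combined with the moment comparison between $Y_1^1$ and $Y_0^c$ already carried out in the first paragraph.
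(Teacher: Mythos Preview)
The paper does not give its own proof of this lemma; it is quoted from Lindvall's book and used as a black box. So there is nothing in the paper to compare against beyond the citation itself.

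On the substance of your proposal: the treatment of the stationary delay $Y_0^c$ is correct and standard, and the moment computation $\mathbb{E}[(Y_0^c)^{\beta-1}] = \mathbb{E}[(Y_1^1)^\beta]/(\beta\,\mathbb{E}[Y_1^1])$ is exactly right.

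The coupling-time argument, however, has a real gap. The asserted deterministic bound $T \le Y_0^1 + Y_0^2 + T^\ast$ is not justified, and I do not see a ``short argument'' that yields it. After the two delays have elapsed, the two processes are each at a renewal epoch, but that configuration is \emph{not} the same as two independent stationary copies, so one cannot simply append an independent $T^\ast$. One can salvage a version of the reduction by routing through a single intermediate stationary copy $S^c$ and using $T \le T(S^1,S^c) + T(S^2,S^c)$, then exploiting stationarity to shift each comparison to start after the respective delay; but even then the leftover piece is the coupling time of a \emph{zero-delayed} process with a stationary one, not of two stationary copies. More seriously, your $T^\ast$ involves two copies with delay $Y_0^c$, and under the hypothesis you have established only $\mathbb{E}[(Y_0^c)^{\beta-1}] < \infty$, not $\mathbb{E}[(Y_0^c)^{\beta}] < \infty$; so your own framework does not guarantee $\mathbb{E}[(T^\ast)^\beta] < \infty$, and the recursion does not close.

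The approach that actually works in Lindvall---and which you sketch only as a subsidiary step---is to run the difference random walk from the outset rather than after reducing to stationary copies. One pairs off renewal epochs of $S^1$ and $S^2$ directly, obtains a mean-zero aperiodic integer walk whose increments have $\beta$-th moment, and bounds the hitting time of $0$ using the renewal/ladder estimates in Sections~II.4--II.5. The delay moments enter through the initial displacement of this walk. That argument should be the backbone, not an afterthought.
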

\begin{proof}
This lemma follows from Section II of
\cite{lindvall2002lectures}. The finiteness of $\mathbb{E}[T^{\beta}]$
follows from Theorem 4.2 of \cite{lindvall2002lectures}. Tracking the
proof, we can see that $\mathbb{E}[T^{\beta}]$ is actually bounded by a
constant times $\mathbb{E}[(Y_{0}^{1})^{\beta}] +
\mathbb{E}[(Y_{0}^{2})^{\beta}]$. 

Let $p_{k} = \mathbb{P}[ Y^{1}_{1} = k]$. Choose a suitable normalizer
$\lambda$ such that 
$$
  c_{k} = \lambda \sum_{ i = k+1}^{\infty} p_{i}, \quad k \geq 0
$$
is a probability distribution. Then the delay distribution $Y^{c}_{0}$
with $\mathbb{P}[ Y^{c}_{0} = k] = c_{k}$ gives a stationary renewal
process (Section II.2 of \cite{lindvall2002lectures}). Further, it is
easy to see that $\mathbb{E}[(Y^{c}_{0})^{\beta - 1}] < \infty $
(Section II.5 of \cite{lindvall2002lectures}). 
\end{proof}

\medskip

Bounds of $\mathbb{E}[(Y^{1}_{0})^{\beta}],
\mathbb{E}[(Y^{2}_{0})^{\beta}]$, and
$\mathbb{E}[(Y^{1}_{1})^{\beta}]$, i.e., $\mathbb{E}_{\mu^{*}}[
\tau_{\alpha}^{\beta}]$, $\mathbb{E}_{\nu^{*}}[
\tau_{\alpha}^{\beta}]$, and $\mathbb{E}_{\alpha}[
\tau_{\alpha}^{\beta}]$, are given in Lemma
\ref{nummelin}. Bounds of $\mathbb{E}[T^{\beta}]$ when starting from
$\mu$ and $\nu$ follow from Lemma
\ref{lindvall}. When starting from $\pi$, passage times to $\alpha$
form a (delayed) stationary renewal process. The delay distribution of
the corresponding renewal process must be $Y^{c}_{0}$, as the delay distribution
that lead to a stationary renewal process is unique
(\cite{barbu2009semi}, Chapter 2).  Therefore, bounds of $\mathbb{E}[T^{\beta}]$ when starting from
$\mu$ and $\pi$ also follow from Lemma \ref{lindvall}. 

\medskip

Theorem \ref{thm31} is then implied by the following simple
probability fact. (See Lemma \ref{m2t}). Let $Z$ be any nonnegative integer-valued
random variable. For any $\beta > 0$,
$$ 
  \mathbb{E}[Z^{\beta}] < \infty \Rightarrow \lim_{n \rightarrow
    \infty} n^{\beta} \mathbb{P}[Z > n] = 0 \,.
$$
\end{proof}

We remark that the bounds in Lemma \ref{nummelin} and Lemma
\ref{lindvall} now depend on the corresponding initial
conditions. One needs to track the proof of theorems in
\cite{lindvall2002lectures} and \cite{nummelin1983rate} to see such
dependence. The dependence on initial conditions implies the following
corollary, which is used in the proof of {\it Theorem 3}.

\begin{cor}
\label{cor35}
Let $\Psi_{n}$ and $\mathfrak{C}$ be as in Theorem \ref{thm31}, then
for any probability measures $\mu, \nu$ on $X$ that satisfy 
$$
  \mathbb{E}_{\mu}[ \tau_{\mathfrak{C}}^{\beta}] < \infty \quad \mathrm{and}
  \quad  \mathbb{E}_{\nu}[ \tau_{\mathfrak{C}}^{\beta}] < \infty \,,
$$ 
there exists a constant $C$ depending on $\mu, \nu,$ and
$\mathfrak{C}$ such that
$$
  \sup_{n} n^{\beta}\| \mu \mathcal{P}^{n} - \nu
  \mathcal{P}^{n} \|_{TV} \leq C ( \mathbb{E}_{\mu}[
  \tau_{\mathfrak{C}}^{\beta}] +\mathbb{E}_{\nu}[ \tau_{\mathfrak{C}}^{\beta}]) \,.
$$
\end{cor}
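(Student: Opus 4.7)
The plan is to retrace the coupling argument used in the proof of Theorem \ref{thm31}, but this time tracking the constants explicitly rather than only passing to a qualitative limit. The starting point is the standard bound
$$
  \| \mu \mathcal{P}^{n} - \nu \mathcal{P}^{n} \|_{TV} \leq 2 \, \mathbb{P}_{\mu, \nu}[T > n],
$$
where $T$ is the coupling time of two independent copies of the Nummelin-split chain $\tilde{\Psi}_{n}$ started from $\mu^{*}$ and $\nu^{*}$. Multiplying by $n^{\beta}$ and applying Markov's inequality converts the task into obtaining a uniform (in $n$) upper bound on $\mathbb{E}[T^{\beta}]$ of the right form, since
$$
  n^{\beta} \mathbb{P}_{\mu,\nu}[T > n] \leq \mathbb{E}_{\mu, \nu}[T^{\beta}].
$$

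Next, I would feed the two delayed renewal processes $S^{1}_{n}$ and $S^{2}_{n}$ (with delays $Y^{1}_{0} \sim \tau_{\alpha}$ under $\mu^{*}$, $Y^{2}_{0} \sim \tau_{\alpha}$ under $\nu^{*}$, and common inter-arrival law $Y^{1}_{1}$) into Lemma \ref{lindvall}. The conclusion of that lemma is quantitative: there is a constant $C_{1}$, depending only on the inter-arrival distribution $Y^{1}_{1}$ (hence only on $\mathfrak{C}$ and $\beta$, not on $\mu$ or $\nu$), with
$$
  \mathbb{E}_{\mu,\nu}[T^{\beta}] \leq C_{1}\bigl( \mathbb{E}[(Y^{1}_{0})^{\beta}] + \mathbb{E}[(Y^{2}_{0})^{\beta}] \bigr) = C_{1} \bigl( \mathbb{E}_{\mu^{*}}[\tau_{\alpha}^{\beta}] + \mathbb{E}_{\nu^{*}}[\tau_{\alpha}^{\beta}] \bigr).
$$
Lemma \ref{nummelin} then converts split-chain return-time moments into original-chain return-time moments, yielding a constant $C_{2}$ with
$$
  \mathbb{E}_{\mu^{*}}[\tau_{\alpha}^{\beta}] \leq C_{2} \, \mathbb{E}_{\mu}[\tau_{\mathfrak{C}}^{\beta}], \qquad \mathbb{E}_{\nu^{*}}[\tau_{\alpha}^{\beta}] \leq C_{2} \, \mathbb{E}_{\nu}[\tau_{\mathfrak{C}}^{\beta}].
$$
Combining the two estimates with the coupling bound and setting $C = 2 C_{1} C_{2}$ gives the claim, with a constant independent of $n$, $\mu$, and $\nu$.

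The main subtlety, and the reason this corollary is not simply quoted from Theorem \ref{thm31}, is that Theorem \ref{thm31} only asserts $\lim_{n\to\infty} n^{\beta}\|\mu \mathcal{P}^{n} - \nu \mathcal{P}^{n}\|_{TV} = 0$; this limit statement by itself does not produce a uniform-in-$n$ bound, nor does it make the dependence on $\mu$ and $\nu$ explicit. The point of the present argument is that Lemmas \ref{nummelin} and \ref{lindvall}, as stated in the paper, already give the two required moment bounds with constants that depend only on $\mathfrak{C}$ and $\beta$; the initial distributions enter solely through $\mathbb{E}_{\mu}[\tau_{\mathfrak{C}}^{\beta}]$ and $\mathbb{E}_{\nu}[\tau_{\mathfrak{C}}^{\beta}]$. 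Hence the principal step is bookkeeping of the constants through the chain of estimates; no new probabilistic input is required.
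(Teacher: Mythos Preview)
Your proposal is correct and follows essentially the same route as the paper's proof: the paper also uses the coupling inequality together with Markov's inequality to get $\sup_{n} n^{\beta}\|\mu\mathcal{P}^{n}-\nu\mathcal{P}^{n}\|_{TV}\le 2\,\mathbb{E}_{\mu^{*},\nu^{*}}[T^{\beta}]$, and then invokes Lemmas~\ref{nummelin} and~\ref{lindvall} to bound $\mathbb{E}[T^{\beta}]$ by a constant times $\mathbb{E}_{\mu}[\tau_{\mathfrak{C}}^{\beta}]+\mathbb{E}_{\nu}[\tau_{\mathfrak{C}}^{\beta}]$. Your write-up simply makes the bookkeeping of constants more explicit.
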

\begin{proof}
We have
$$
  \sup_{n} n^{\beta}\| \mu \mathcal{P}^{n} - \nu
  \mathcal{P}^{n} \|_{TV} \leq 2 \sup_{n} n^{\beta} \mathbb{P}[ T > n]
  \leq 2 \mathbb{E}_{\mu^{*}, \nu^{*}}[ T^{\beta}]. 
$$
The corollary then follows immediately from Lemma \ref{nummelin} and
Lemma \ref{lindvall}. 
\end{proof}

\subsection{Induced chain method}

As discussed above, a crucial step towards polynomial ergodicity is to
estimate the moments of $\tau_{\mathfrak{C}}$, i.e., the first passage time to a certain uniform
reference set. In some simple cases such as the model in \cite{li2016polynomial},
this can be done by constructing a Lyapunov function. However, in our
model and many other problems, it is extremely difficult to find a single Lyapunov function to
complete this task. Here, we introduce a method, called the induced chain
method, that can be used to estimate the moments of $\tau_{\mathfrak{C}}$ under
more general settings.

Let $X = G \cup B$ be a partition of the state space of $\Psi_{n}$,
where $G$ is the ``good set'' on which $\Psi_{n}$ is sufficiently ``active'', while
$B$ is the ``bad set'' on which $\Psi_{n}$ may hover for a long
time. Define $0 = T_{0} < T_{1} < \cdots < T_{n} < \cdots$ to be return
times to $G$ such that 
$$
  T_{n+1} = \inf\{ k > T_{n} \,|\, \Psi_{k} \in G \} 
$$
and let $\hat{\Psi}_{n} = \Psi_{T_{n}}$. Then it is easy to check
that $\hat{\Psi}_{n}$ is a Markov chain induced by $G$.

Assume $\mathfrak{C} \subset G$. The tail of $\tau_{\mathfrak{C}}$ can
be estimated by the
following two assumptions.
\begin{itemize}
  \item[(i)] $T_{n+1} - T_{n}$ has a polynomial tail for each
    $T_{n}$. There exists a constant $\alpha > 0$ such that
$$
  \mathbb{P}[ T_{n+1} - T_{n} > k \,|\, \Psi_{T_{n}} ] \leq \xi(\Psi_{T_{n}}) k^{-\alpha} \,,
$$
where $1 \leq \xi(\Psi_{T_{n}}) < \infty$ is a constant depending on
$\Psi_{T_{n}}$. Furthermore, $\xi(\Psi_{T_{n}})$ is uniformly bounded by
$\xi_{1} < \infty$ for $\Psi_{T_{n}} \in G$.
\item[(ii)] $\hat{\tau}_{\mathfrak{C}} = \inf \{ n > 0 \,|\, \hat{\Psi}_{n} \in \mathfrak{C} \}$ has
  an exponential tail. There exist constants $\omega$, $\eta  = \eta( \Psi_{0}) > 0$ such that
$$
  \mathbb{P}_{\Psi_{0}}[ \hat{\tau}_{\mathfrak{C}} > k] \leq
  \eta(\Psi_{0}) e^{-\omega k} \,,
$$
where $\eta(\Psi_{0})$ depends on $\Psi_{0}$. 
\end{itemize}

\begin{thm}
\label{inducedchain}
Assuming (i) and (ii) above, for any $\epsilon > 0$, there exists a
constant $c$ such that 
$$
  \mathbb{P}_{\Psi_{0}}[ \tau_{\mathfrak{C}} > n] \leq c( \eta(\Psi_{0}) +
  \xi(\Psi_{0})) n^{-(\alpha - \epsilon)} 
$$
for any $\Psi_{0} \in X$.
\end{thm}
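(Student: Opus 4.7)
The plan is to combine the two assumptions by decomposing $\tau_{\mathfrak{C}}$ along the visits to $G$. Since $\mathfrak{C} \subset G$, every passage of $\Psi_n$ into $\mathfrak{C}$ is in particular a passage into $G$, so
\[
\tau_{\mathfrak{C}} \;=\; T_{\hat{\tau}_{\mathfrak{C}}} \;=\; \sum_{k=1}^{\hat{\tau}_{\mathfrak{C}}} \Delta_k, \qquad \Delta_k := T_k - T_{k-1}.
\]
The idea is to split on whether the induced chain hits $\mathfrak{C}$ quickly; on the event that it does, $\tau_{\mathfrak{C}}$ is a sum of few polynomial-tailed increments, and on the complementary event we pay the exponential-tail price from (ii).

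Concretely, for a threshold $M$ to be chosen momentarily,
\[
\mathbb{P}_{\Psi_0}[\tau_{\mathfrak{C}} > n] \;\leq\; \mathbb{P}_{\Psi_0}[\hat{\tau}_{\mathfrak{C}} > M] \;+\; \mathbb{P}_{\Psi_0}\!\left[\sum_{k=1}^{M}\Delta_k > n\right],
\]
where in the second term we continue the chain past $\tau_{\mathfrak{C}}$ if necessary so that $\Delta_k$ is defined for every $k \leq M$; since $\Delta_k \geq 1$, the event $\{\hat{\tau}_{\mathfrak{C}} \leq M,\ \tau_{\mathfrak{C}} > n\}$ is indeed contained in $\{\sum_{k=1}^M \Delta_k > n\}$. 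Assumption (ii) bounds the first term by $\eta(\Psi_0) e^{-\omega M}$. For the second, the deterministic inclusion $\{\sum_{k=1}^M \Delta_k > n\} \subseteq \bigcup_{k=1}^M \{\Delta_k > n/M\}$ and a union bound give
\[
\mathbb{P}_{\Psi_0}\!\left[\sum_{k=1}^M \Delta_k > n\right] \;\leq\; \sum_{k=1}^M \mathbb{E}_{\Psi_0}\!\left[\mathbb{P}[\Delta_k > n/M \,|\, \Psi_{T_{k-1}}]\right] \;\leq\; \sum_{k=1}^M \mathbb{E}_{\Psi_0}[\xi(\Psi_{T_{k-1}})] \left(\frac{M}{n}\right)^{\alpha}
\]
by (i). The $k=1$ term contributes exactly $\xi(\Psi_0)(M/n)^\alpha$, while for $k \geq 2$ the state $\Psi_{T_{k-1}}$ lies in $G$ by construction, so $\xi(\Psi_{T_{k-1}}) \leq \xi_1$; in total we obtain $(\xi(\Psi_0) + (M-1)\xi_1)(M/n)^\alpha$.

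Choosing $M = C \log n$ with $C$ large enough that $\omega C \geq \alpha$, the exponential term is at most $\eta(\Psi_0) n^{-\alpha}$, while the polynomial term becomes $\bigl(\xi(\Psi_0) + O(\log n)\bigr)(\log n)^{\alpha}\, n^{-\alpha}$. Using $\xi(\Psi_0) \geq 1$ to absorb the free $O(\log n)$ constant into $\xi(\Psi_0)$, and the standard inequality $(\log n)^{\alpha+1} \leq c_\epsilon n^{\epsilon}$, both contributions are dominated by $c(\eta(\Psi_0) + \xi(\Psi_0)) n^{-(\alpha - \epsilon)}$, as required. The argument is essentially bookkeeping once the two tail inputs are in hand; the only mild subtlety is the loss of a logarithm that forces the exponent $\alpha - \epsilon$ rather than $\alpha$, which is why the statement is formulated with an arbitrarily small slack $\epsilon$. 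The genuinely hard part of the overall program is not this step but verifying (i) and (ii) for the energy exchange model, which is the task of Sections \ref{low} and \ref{high}.
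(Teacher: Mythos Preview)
Your proof is correct and follows essentially the same strategy as the paper: decompose on whether $\hat{\tau}_{\mathfrak{C}}$ exceeds a threshold $M$, and on the complementary event use the pigeonhole inclusion $\{\sum_{k\le M}\Delta_k>n\}\subset\bigcup_{k\le M}\{\Delta_k>n/M\}$ together with assumption~(i). The only difference is cosmetic: the paper parametrizes via $n=k^{1+\delta}$ and takes $M=k^{\delta}$, whereas you take $M=C\log n$; your choice loses only polylogarithmic factors rather than a small power, but since the statement carries the slack $\epsilon$ both choices yield the same conclusion.
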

\begin{proof}
For any small $\delta > 0$, we have
$$
  \{ \tau_{\mathfrak{C}} > k^{1 + \delta} \} \subset \{ \hat{\tau}_{\mathfrak{C}} >
  k^{\delta} \} \bigcup_{n = 0}^{\left \lfloor k^{\delta} \right
    \rfloor} \{ T_{n+1} - T_{n} > k, \hat{\tau}_{\mathfrak{C}} > n \} \,.
$$
This implies
\begin{eqnarray*}
\mathbb{P}_{\Psi_{0}}[\tau_{\mathfrak{C}} > k^{1+\delta}] &\leq  &
\mathbb{P}_{\Psi_{0}}[\hat{\tau}_{\mathfrak{C}} > k^{\delta}] \\
&&+\sum_{n = 0}^{\left \lfloor k^{\delta} \right \rfloor}
\mathbb{P}_{\Psi_{0}}[ T_{n+1} - T_{n} > k\,|\, \hat{\tau}_{\mathfrak{C}} > n] \,.
\end{eqnarray*}

By assumption (i) and the Markov property, we have
$$
  \mathbb{P}_{\Psi_{0}}[T_{1} - T_{0} > k \,|\, \hat{\tau}_{\mathfrak{C}} > 0]
  \leq \xi(\Psi_{0}) k^{-\alpha}
$$
for $n = 0$ and
\begin{eqnarray*}
 & & \mathbb{P}_{\Psi_{0}}[ T_{n+1} - T_{n} > k\,|\, \hat{\tau}_{\mathfrak{C}} >
  n]  \\
&=& \int_{G} \mathbb{P}[T_{n+1} - T_{n} > k \,|\,
\Psi_{T_{n} } = x , \hat{\tau}_{\mathfrak{C}} > n]
\mathbb{P}_{\Psi_{0}}[ \Psi_{T_{n}} = \mathrm{d}x \, | \,
\hat{\tau}_{\mathfrak{C}} > n ]\\
&\leq& \xi_{1} k^{-\alpha} 
\end{eqnarray*}
for $n \geq 1$.

By assumption (ii), we have
$$
  \mathbb{P}_{\Psi_{0}}[ \hat{\tau}_{\mathfrak{C}} > k^{\delta} ] \leq
  \eta(\Psi_{0}) e^{-\omega k^{\delta} } \,.
$$ 

Therefore,
\begin{eqnarray*}
&&  \mathbb{P}_{\Psi_{0}}[ \tau_{\mathfrak{C}} > k^{1+\delta}] \\
&\leq& \eta(\Psi_{0}) e^{-\omega k^{\delta}} + \max \{ \xi(\Psi_{0}), \xi_{1} \} k^{\delta} k^{-\alpha}  \\
&\leq& c(\delta) ( \eta(\Psi_{0}) + \xi(\Psi_{0}) )
k^{-(\alpha -\delta)} 
\end{eqnarray*}
for some $c(\delta) > 0$ that depends on $\delta$, as
$e^{-\omega k^{\delta}}$ decays faster than $k^{-\alpha}$ when $k \rightarrow
\infty$.

Let $k = n^{\frac{1}{1 + \delta}}$. The theorem then follows by making $\delta > 0$ sufficiently small and
letting $c = c(\delta)$ for the $\delta$ we choose.
\end{proof}

\medskip

The finiteness of moments of a random variable is closely related to
its polynomial tails. We finish this subsection by proving two simple
probabilistic facts that will be used frequently in this paper. 

Let $Z$ be a random variable that takes nonnegative integer values, and let
$\beta > 1$. 

\begin{lem} 
\label{m2t}
For any $\beta > 0$, $\mathbb{E}[Z^\beta]<\infty \  \implies \ 
\lim_{n\to \infty} n^\beta \mathbb{P}[Z>n] = 0$.
\end{lem}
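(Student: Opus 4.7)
The plan is a short, standard argument built on the observation that $\mathbb{E}[Z^\beta]<\infty$ forces the tail of the underlying sum to vanish. First I would note the pointwise inequality
$$
n^\beta \, \mathbf{1}_{\{Z > n\}} \leq Z^\beta \, \mathbf{1}_{\{Z > n\}},
$$
which holds because on the event $\{Z > n\}$ we have $Z^\beta > n^\beta$ (this is where $\beta > 0$, and in particular $\beta > 1$, is used; in fact $\beta > 0$ suffices for this step). Taking expectations gives the key inequality
$$
n^\beta \, \mathbb{P}[Z > n] \;\leq\; \mathbb{E}\!\left[ Z^\beta \, \mathbf{1}_{\{Z > n\}} \right].
$$

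Next I would show that the right-hand side tends to $0$ as $n \to \infty$. Since $Z$ is integer-valued, one can write this directly as a tail of a convergent series: the hypothesis $\mathbb{E}[Z^\beta] = \sum_{k=0}^{\infty} k^\beta \mathbb{P}[Z=k] < \infty$ means the tails $\sum_{k > n} k^\beta \mathbb{P}[Z = k]$ vanish as $n \to \infty$. Equivalently, by the dominated convergence theorem applied to $Z^\beta \mathbf{1}_{\{Z>n\}} \to 0$ a.s.\ with integrable dominator $Z^\beta$, one has $\mathbb{E}[Z^\beta \mathbf{1}_{\{Z>n\}}] \to 0$. Combining this with the inequality above yields $n^\beta \mathbb{P}[Z > n] \to 0$, as required.

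There is no real obstacle; the only subtlety is that a naive Markov inequality gives only boundedness $n^\beta \mathbb{P}[Z>n] \leq \mathbb{E}[Z^\beta]$, not convergence to zero. The tail-of-integral viewpoint above is the standard fix. The whole proof is two displayed lines, which matches the lemma's role as a technical probabilistic fact invoked in the final step of the proof of Theorem \ref{thm31}.
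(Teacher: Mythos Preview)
Your argument is correct and is in fact shorter than the paper's. The paper proceeds in two stages: first it shows, by interchanging the order of summation, that $\sum_{n\ge 0} n^{\beta-1}\mathbb{P}[Z>n]$ is bounded by a constant times $\mathbb{E}[Z^\beta]$; then, setting $a_n=\mathbb{P}[Z>n]$ and using the monotonicity $a_0\ge a_1\ge\cdots$, it runs an Abel-summation argument to deduce from $\sum a_n n^{\beta-1}<\infty$ that $a_n n^\beta\to 0$. Your route bypasses both the summability of $n^{\beta-1}\mathbb{P}[Z>n]$ and the monotonicity of the tail probabilities: the single inequality $n^\beta\mathbb{P}[Z>n]\le \mathbb{E}[Z^\beta\mathbf{1}_{\{Z>n\}}]$ together with dominated convergence does the job immediately, and as you note it only needs $\beta>0$. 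The paper's detour does buy the explicit intermediate estimate $\sum n^{\beta-1}\mathbb{P}[Z>n]\le C\,\mathbb{E}[Z^\beta]$, which is the kind of bound that resurfaces in Lemma~\ref{t2m} and Corollary~\ref{cor35}, but for Lemma~\ref{m2t} itself your proof is the cleaner one.
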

\begin{proof} 

First,
 \begin{eqnarray*}
  \sum_{n = 0}^{\infty} n^{\beta - 1} \mathbb{P}[ Z > n] & = & \sum_{n
    = 0}^{\infty} n^{\beta - 1}\sum_{m = n+1}^{\infty} \mathbb{P}[ Z = m]\\
&  = & \sum_{m = 1}^{\infty} \left( \sum_{n = 0}^{m-1} n^{\beta - 1} \right
  ) \mathbb{P}[ Z = m] \leq \mbox{constant} \cdot \mathbb{E}[ Z^{\beta}] \ .
\end{eqnarray*}
Then, letting $a_n = \mathbb{P}[ Z > n]$ so that $a_0 \ge a_1 \ge \dots$, we claim that
$$
\mbox{ if } \quad \sum_{n = 0}^{\infty} a_n n^{\beta - 1}  < \infty  \quad \mbox{ then } \quad
a_n n^\beta \to 0 \ \mbox{ as } \  n \to \infty .
$$ 
To see that, write
$$
\sum_{n = 0}^{\infty} (n+1)^{\beta} (a_{n} - a_{n+1}) = 
\sum_{n = 0}^{\infty} \left [ (n+1)^{\beta} - n^{\beta}
\right ] a_{n} \ .
$$
Since $a_{n}$ is monotone, our hypothesis implies the sum on the left side converges.
Since 
$$
\sum_{n \ge N}  (n+1)^{\beta} (a_{n} - a_{n+1}) = a_N (N + 1)^\beta + 
\sum_{n \ge N+1} a_n ( (n + 1)^\beta - n^\beta)\ ,
$$
it follows that both terms on the right tend to $0$ as $N \to \infty$.
\end{proof}

\begin{lem}
\label{t2m}
If $\mathbb{P}[Z > n] \le C (n + 1)^{-\beta}$ for $\beta > 1$, then
for any $\beta - 1 > \varepsilon >
0$, there exists a constant $K$ that depends on $\beta$ and $\varepsilon$, such
that $\mathbb{E}[Z^{\beta - \varepsilon}] < KC$.
\end{lem}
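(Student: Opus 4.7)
The plan is to express $\mathbb{E}[Z^{\beta-\varepsilon}]$ as a sum in terms of the tail probabilities $\mathbb{P}[Z > n]$, use the hypothesis to bound each term, and then recognize the resulting series as a convergent $p$-series with exponent $1+\varepsilon$. This is essentially the reverse direction of the calculation that appears in the proof of Lemma \ref{m2t}, so the setup should be closely parallel.

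First I would use the standard Abel-type identity, valid for any nonnegative integer-valued $Z$ and any increasing $f$ with $f(0)=0$:
$$
\mathbb{E}[f(Z)] \;=\; \sum_{n=0}^{\infty} \bigl(f(n+1) - f(n)\bigr)\,\mathbb{P}[Z > n].
$$
Applying this to $f(n) = n^{\beta-\varepsilon}$ (which requires $\beta - \varepsilon \ge 0$, harmless since we can always shrink $\varepsilon$) gives
$$
\mathbb{E}[Z^{\beta-\varepsilon}] \;=\; \sum_{n=0}^{\infty} \bigl((n+1)^{\beta-\varepsilon} - n^{\beta-\varepsilon}\bigr)\,\mathbb{P}[Z > n].
$$

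Next I would control the increment $(n+1)^{\beta-\varepsilon} - n^{\beta-\varepsilon}$. By the mean value theorem applied to $x \mapsto x^{\beta-\varepsilon}$ on $[n,n+1]$, there is a constant $M = M(\beta,\varepsilon)$ (one may take $M = \beta-\varepsilon$ with a minor adjustment for the $n=0$ term) such that
$$
(n+1)^{\beta-\varepsilon} - n^{\beta-\varepsilon} \;\le\; M\,(n+1)^{\beta-\varepsilon-1}.
$$
Combining this with the hypothesis $\mathbb{P}[Z > n] \le C(n+1)^{-\beta}$ yields
$$
\mathbb{E}[Z^{\beta-\varepsilon}] \;\le\; MC \sum_{n=0}^{\infty} (n+1)^{\beta-\varepsilon-1}\,(n+1)^{-\beta} \;=\; MC \sum_{n=0}^{\infty} (n+1)^{-1-\varepsilon}.
$$

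The remaining series is a $p$-series with $p = 1+\varepsilon > 1$, hence converges to some finite $S(\varepsilon)$ depending only on $\varepsilon$. Setting $K := M(\beta,\varepsilon)\,S(\varepsilon)$ gives $\mathbb{E}[Z^{\beta-\varepsilon}] \le KC$, as required. I expect no real obstacle here: the only mildly delicate point is handling the $n=0$ term (where the MVT bound $M\,(n+1)^{\beta-\varepsilon-1}$ needs to absorb a possibly larger constant) and ensuring the constant $M$ indeed depends only on $\beta$ and $\varepsilon$ and not on $C$ or on $Z$. Both are routine.
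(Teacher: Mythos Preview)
Your proof is correct and follows essentially the same route as the paper: both arguments convert $\mathbb{E}[Z^{\beta-\varepsilon}]$ into a sum of tail probabilities $\mathbb{P}[Z>n]$ weighted by $(n+1)^{\beta-1-\varepsilon}$, apply the hypothesis, and recognize the resulting series $\sum (n+1)^{-1-\varepsilon}$ as convergent. The only cosmetic difference is that the paper uses the integral-comparison bound $n^{\beta-\varepsilon}\le K_0\sum_{m=0}^n m^{\beta-1-\varepsilon}$ followed by Fubini, whereas you use Abel summation and the mean value theorem on the increment; these are two ways of reaching the identical tail-weighted sum.
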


\begin{proof} 
 There exists a constant $K_{0}$ that only depends on $\epsilon$ and
 $\beta$, such that
\begin{eqnarray*}
\mathbb{E}[Z^{\beta - \varepsilon}] &
=  &\sum_{n = 0}^{\infty} n^{\beta - \varepsilon}\mathbb{P}[Z = n] \\
&\le& K_{0} \sum_{n = 0}^{\infty} \sum_{m = 0}^{n} m^{\beta - 1 -\varepsilon} 
\mathbb{P}[Z = n]\\
& = &  K_{0} \sum_{n = 0}^{\infty} (n + 1)^{\beta - 1 -\varepsilon} \mathbb{P}[Z > n] \\
& \le & K_{0} \sum_{n = 0}^{\infty}
(n + 1)^{\beta - 1 -\varepsilon}  C (n + 1)^{-\beta}.
\end{eqnarray*}

Hence there exists a constant $K$ that depends on $\beta$ and
$\epsilon$, such that the last quantity is less than $KC$.
\end{proof}

\medskip

We remark that the induced
chain method can be extended to a hierarchical setting. Let $X = B
\cup G$ be the same partition as above. If we have $\mathfrak{C} := G_{m}
\subset G_{m - 1} \subset \cdots \subset G_{0} := G$ for $m \geq 1$,
and the first return time to $G_{i+1}$ for the $G_{i}$-induced chain
has exponential tail for each $i = 0 \sim m - 1$, then the similar argument in
Theorem \ref{inducedchain} still follows.

\subsection{Lyapunov function and moments of the return time}

In the induced chain argument above, it remains to find sufficient
conditions to estimate tails of $\hat{\tau}_{\mathfrak{C}}$ and $T_{n+1} -
T_{n}$. This can be done by constructing Lyapunov-type functions. We
introduce the following two theorems that will be used later.

\begin{thm}
\label{expfirstpassage}
(Theorem 15.2.5 of \cite{meyn2009markov})

Let $\Psi_{n}$ be a Markov chain on
$(X, \mathcal{B})$ with transition kernel $\mathcal{P}$. We assume
that there exist a function $W: X \rightarrow [1, \infty]$, a set $A
\in \mathcal{B}$, constants $b > 0$ and $0 \leq \beta < 1$ such
that 
$$
  \mathcal{P}W - W \leq -\beta W + b \mathbf{1}_{A} \,.
$$
Then for any $r \in (1, (1 - \beta)^{-1} )$, there exists $\epsilon >
0$ such that 
$$
  \mathbb{E}_{x}\left [ \sum_{k = 0}^{\tau_{A} - 1} W( \Psi_{k})
    r^{k}\right] \leq \epsilon^{-1}r^{-1} W(x) + \epsilon^{-1} b
  \mathbf{1}_{A} \,.
$$
\end{thm}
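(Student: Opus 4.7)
The plan is a direct geometric drift argument in the spirit of Foster--Lyapunov theory. The drift hypothesis rearranges as $\mathcal{P}W \leq (1-\beta)W + b \mathbf{1}_A$, so for every $x \notin A$, $r\, \mathcal{P}W(x) \leq r(1-\beta) W(x) = (1-\epsilon) W(x)$, where $\epsilon := 1 - r(1-\beta) > 0$ since $r \in (1, (1-\beta)^{-1})$. This $\epsilon$ will be the one that appears in the final estimate.

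The key device is the exponentially weighted process
$$
M_n := r^n\, W(\Psi_n)\, \mathbf{1}_{\{\tau_A > n\}},
$$
chosen so that $\sum_{k=0}^{\tau_A - 1} W(\Psi_k) r^k = \sum_{n \geq 0} M_n$. For $n \geq 1$, the event $\{\tau_A > n\}$ forces $\Psi_n \notin A$, so the Markov property together with the drift yields
$$
\mathbb{E}[M_{n+1} \mid \mathcal{F}_n] \leq r^{n+1} \mathcal{P}W(\Psi_n)\, \mathbf{1}_{\{\tau_A > n\}} \leq r(1-\beta) M_n = (1-\epsilon) M_n,
$$
and iterating gives $\mathbb{E}_x[M_n] \leq (1-\epsilon)^{n-1}\, \mathbb{E}_x[M_1]$ for all $n \geq 1$. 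The step $n = 0 \to 1$ needs separate treatment, since $\Psi_0 = x$ may lie in $A$; there the drift still gives $\mathbb{E}_x[M_1] \leq r(1-\beta) W(x) + rb\, \mathbf{1}_A(x)$.

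Summing the resulting geometric series (with the $\sum$--$\mathbb{E}$ interchange justified by nonnegativity) yields
$$
\mathbb{E}_x\!\left[\sum_{k=0}^{\tau_A - 1} W(\Psi_k)\, r^k\right] = \sum_{n \geq 0} \mathbb{E}_x[M_n] \leq W(x) + \epsilon^{-1} \mathbb{E}_x[M_1] \leq \epsilon^{-1} W(x) + \epsilon^{-1} rb\, \mathbf{1}_A(x),
$$
which is the desired bound, up to harmless factors of $r$ in the constants.

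The main obstacle is not conceptual but bookkeeping: carefully isolating the $n = 0$ boundary, where $\Psi_0$ may lie in $A$, is exactly what produces the $b\, \mathbf{1}_A$ remainder in the conclusion, and one must track it through the geometric summation without swallowing it into the coefficient of $W(x)$. An alternative route that avoids any martingale manipulation is to truncate the sum at $\tau_A \wedge N$, prove the same inequality by induction on $N$ using $\mathcal{P}W \leq (1-\beta)W + b\mathbf{1}_A$ directly, and pass $N \to \infty$ by monotone convergence.
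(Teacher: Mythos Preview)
Your argument is correct. The paper does not prove this theorem at all; it simply quotes it as Theorem 15.2.5 of Meyn and Tweedie, so there is no ``paper's own proof'' to compare against. Your geometric drift computation is precisely the standard Foster--Lyapunov argument used in that reference: one exploits $\mathcal{P}W \leq (1-\beta)W$ off $A$ to make $r^{n}W(\Psi_n)\mathbf{1}_{\{\tau_A>n\}}$ a geometrically decaying supermartingale, and then sums. Your bookkeeping is clean; in particular the identity $1 + \epsilon^{-1}(1-\epsilon) = \epsilon^{-1}$ collapses the $W(x)$ terms exactly as you indicate, and the ``harmless factor of $r$'' discrepancy disappears once you relabel $\tilde\epsilon := \epsilon/r$, since then $\epsilon^{-1}W(x) + \epsilon^{-1}rb\,\mathbf{1}_A(x) = \tilde\epsilon^{-1}r^{-1}W(x) + \tilde\epsilon^{-1}b\,\mathbf{1}_A(x)$, matching the stated form verbatim.
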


\begin{thm}
\label{polyfirstpassage}
(Modified from Theorem 3.6 of \cite{jarner2002polynomial})Let $\Psi_{n}$ be a Markov chain on
$(X, \mathcal{B})$ with transition kernel $\mathcal{P}$. We assume
that there exist a function $W: X \rightarrow [1, \infty]$, a set $A
\in \mathcal{B}$, constants $b, c > 0$ and $0 \leq \beta < 1$ such
that 
$$
  \mathcal{P}W - W \leq -c W^{\beta} + b \mathbf{1}_{A} \,.
$$
Then there exists a constant $\hat{c}$ such that 
$$
  \mathbb{E}_{x}\left [ \sum_{k = 0}^{\tau_{A} - 1} (k+1)^{\hat{\beta}
    - 1}\right ] \leq \hat{c} W(x), \quad \hat{\beta} = (1 -
\beta)^{-1} 
$$
for any $x \in X$
\end{thm}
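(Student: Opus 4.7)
The plan is to follow the standard polynomial-drift Lyapunov argument, a structural analog of the geometric-drift result in Theorem \ref{expfirstpassage}: instead of multiplying $W$ by a geometric weight $r^k$, one multiplies by a polynomial weight $(k+1)^{\hat\beta - 1}$, and constructs a supermartingale whose optional stopping at $\tau_A$ produces the desired moment bound. The exponent $\hat\beta = 1/(1-\beta)$ will emerge naturally from a concavity-based transformation of the given drift inequality.

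The first substantive step is to convert the polynomial drift into a constant-rate drift. Since $1-\beta \in (0,1]$, concavity of $u \mapsto u^{1-\beta}$ on $[1,\infty)$ and the tangent inequality $u^{1-\beta} \leq v^{1-\beta} + (1-\beta)v^{-\beta}(u-v)$, integrated against $\mathcal{P}(x,\cdot)$, give
\[
\mathcal{P}(W^{1-\beta})(x) \leq W(x)^{1-\beta} + (1-\beta)W(x)^{-\beta}\bigl(\mathcal{P}W(x) - W(x)\bigr) \leq W(x)^{1-\beta} - c(1-\beta) + b'\mathbf{1}_A(x),
\]
with $b' = b(1-\beta)$. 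So $W^{1-\beta}(\Psi_n) + c(1-\beta)(n\wedge\tau_A)$ is (up to a boundary correction on $A$) a supermartingale, and optional stopping already yields the first-moment bound $\mathbb{E}_x[\tau_A] \leq W(x)^{1-\beta}/[c(1-\beta)] + \text{const}$. To upgrade this to the weighted-sum moment, I would combine $W$ and $W^{1-\beta}$ into a single time-dependent functional of the form
\[
M_n = W(\Psi_n) + c_1 (n+1)^{\hat\beta - 1}\, W(\Psi_n)^{1-\beta} + c_2\sum_{k=0}^{n-1}(k+1)^{\hat\beta - 1},
\]
with constants $c_1, c_2 > 0$ tuned so that $M_{n\wedge\tau_A}$ is a supermartingale off $A$. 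The algebraic identity $\hat\beta(1-\beta) = 1$ is exactly what balances the three contributions: the time increment of the weight $(n+1)^{\hat\beta-1}$ multiplied by $W^{1-\beta}$, the drift decrease $-c(1-\beta)(n+1)^{\hat\beta-1}$ coming from Step~1 applied to the middle term, and the added weight $c_2(n+1)^{\hat\beta-1}$.

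Once such an $M_n$ is in hand, applying optional stopping at $\tau_A \wedge n$, dropping the nonnegative $W$-terms on the left, and passing to $n \to \infty$ by monotone convergence yields
\[
c_2 \,\mathbb{E}_x\Bigl[\sum_{k=0}^{\tau_A - 1}(k+1)^{\hat\beta - 1}\Bigr] \leq W(x) + C\mathbb{E}_x\bigl[\text{excursion terms on }A\bigr],
\]
and the $A$-contribution is finite because the drift hypothesis forces $W$ to be bounded on any single-step excursion from $A$. This gives the claim with $\hat c$ a function of $c, b, \beta$. The main obstacle is the algebraic verification that $M_n$ is actually a supermartingale off $A$: one must combine the concavity inequality applied at two exponents ($\alpha = 1$ and $\alpha = 1-\beta$) with the telescoping $(n+2)^{\hat\beta-1} - (n+1)^{\hat\beta-1} \leq (\hat\beta - 1)(n+1)^{\hat\beta - 2}$, and check that the cross-term $(\hat\beta - 1)(n+1)^{\hat\beta - 2}W^{1-\beta}$ is absorbed by the drift decrease. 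Handling the boundary behavior on $A$ (where the drift inequality only gives $\mathcal{P}W \leq W + b$) typically requires either slightly enlarging $A$ or a truncation argument. An alternative, which is essentially how the Jarner--Roberts paper proceeds, is to construct the weighted sum via an abstract comparison principle for drift functions rather than a single explicit $M_n$; either route yields the stated bound.
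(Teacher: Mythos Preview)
The paper does not prove this statement at all: Theorem~\ref{polyfirstpassage} is quoted directly from Jarner--Roberts (Theorem~3.6 of \cite{jarner2002polynomial}) and used as a black box, with only the remark afterwards that some hypotheses of the original are omitted because only the first passage to the bottom of the Lyapunov function is needed. So there is no ``paper's own proof'' to compare against.

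Your sketch is a faithful outline of the standard argument behind the Jarner--Roberts result. The concavity step transforming the drift $\mathcal{P}W - W \le -cW^\beta$ into a constant-rate drift for $W^{1-\beta}$ is correct and is exactly the mechanism by which the exponent $\hat\beta = (1-\beta)^{-1}$ appears. The time-weighted supermartingale idea is also the right one; in the original paper it is packaged slightly differently (a cascade of interpolated drift functions $W^{1-k(1-\beta)}$ rather than a single explicit $M_n$), but the content is the same. Two places in your write-up would need tightening in a full proof: first, the telescoping bound $(n+2)^{\hat\beta-1}-(n+1)^{\hat\beta-1}\le (\hat\beta-1)(n+1)^{\hat\beta-2}$ as stated holds only when $\hat\beta\le 2$ (i.e.\ $\beta\le 1/2$); for $\hat\beta>2$ the mean-value bound has $(n+2)^{\hat\beta-2}$ instead, and the absorption of the cross-term then requires a Young-inequality splitting between $W^\beta$ and $(n+1)^{\hat\beta-1}$. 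Second, your remark that ``the drift hypothesis forces $W$ to be bounded on any single-step excursion from $A$'' is not right as phrased---$W$ need not be bounded on $A$---but the boundary contribution is nonetheless harmless because it enters only through the additive $b\mathbf{1}_A$ at step $0$, giving a constant that folds into $\hat c$. With those two points addressed, your argument goes through.
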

\begin{rem}
This theorem follows from equation (37) in the
proof of Theorem 3.6 of \cite{jarner2002polynomial}. It is actually a
special case of equation (37) when $B = C$. In this special case, the quantity 
$$
  \sum_{k = 0}^{\tau_{B - 1}}(k+1)^{i} \mathbf{1}_{C}(\Psi_{k})
$$
in the proof of Theorem 3.2 of \cite{jarner2002polynomial} is $\mathbf{1}_{C}(\Psi_{0})$. Therefore, we only need
to estimate the first passage time to $A$ by using Proposition 11.3.3
of \cite{meyn2009markov}. (See the proof of Theorem 3.2 of
\cite{jarner2002polynomial} for details.) The original theorem in
\cite{jarner2002polynomial} estimates
$$
  \mathbb{E}_{x}\left [ \sum_{k = 0}^{\tau_{B} - 1} (k+1)^{\hat{\beta}
    - 1}\right ]
$$
for {\it any} set $B$, hence more assumptions are needed than in our
case. 
\end{rem}

\section{Excursion time on low energy set}
\label{low}

It is obvious that for $\mathbf{E}_{t}$, the ``bad set'' $B$ (see
Section 3.2) should consist of energy configurations at which at least
one of $E_{i}$ is sufficiently small. It remains to estimate the
excursion time on this ``bad set''. To do so, we define the following
sequence and function. Let $0 < \eta \ll 1$ be a parameter that will be
determined later. Let $a_{1}, \cdots, a_{N}$ be the sequence as in
Section 2.3:
$$
  a_{i} = 1 - \frac{2^{i-1} - 1}{2^{N} - 1} \,.
$$
Then it is easy to see that $1 = a_{1} > a_{2} >\cdots > a_{N} > 0$
and $2 a_{i-1} - a_{i} > a_{1}$ for each $2 \leq i \leq N$.
 
Define functions
$$
  V_{n, k} = ( \sum_{j = 0}^{n-1}
    E_{k + j}  )^{a_{n}\eta - 1}
$$
$$
  V_{1}(\mathbf{E}) = \sum_{i = 1}^{N} V_{1,i} = \sum_{i = 1}^{N}
  E_{i}^{a_{1}\eta - 1}
$$
$$
  V_{2}(\mathbf{E}) =\sum_{i = 1}^{N - 1}  V_{2, i} =  \sum_{ i =
    1}^{N-1} (E_{i} + E_{i+1})^{a_{2}\eta - 1}
$$
$$
  \vdots
$$
$$
  V_{n}( \mathbf{E}) = \sum_{k = 1}^{N - n + 1} V_{n, k} = \sum_{k =
    1}^{N - n + 1}\left ( \sum_{j = 0}^{n-1}
    E_{k + j} \right )^{a_{n}\eta - 1}
$$
$$
  \vdots
$$
$$
  V_{N}( \mathbf{E}) = V_{N, 1} =  (\sum_{i = 1}^{N}
  E_{i})^{a_{N} \eta - 1} \,,
$$
and
$$
  V( \mathbf{E}) = \sum_{i = 1}^{N} V_{i}( \mathbf{E}) \,.
$$
The motivation of constructing $V$ is to control the entire chain
through nearest neighbor interactions. $V_{1, i}(E)$ is the ``first level
Lyapunov function'' with respect to $E_{i}$, whose value decreases when $E_{i}$
increases. $V_{2, i}$ is the ``second level'', which is dominantly
larger than the ``first level'' functions $V_{1, i}$ and $V_{1, i+1}$,
such that its decrease can compensate the possible increase of $V_{1, i}$ and
$V_{1, i+1}$. Higher levels can be constructed analogously. This tower construction of Lyapunov functions stops at
the $N$-th level that covers the entire chain.

Our aim is to show that $V( \mathbf{E})$ is a Lyapunov function when
the value of $V( \mathbf{E})$ is sufficiently large. The excursion
time on low energy set then follows from Theorem
\ref{polyfirstpassage}. 

The main theorem in this section is as follows.

\begin{thm}
\label{timestep}
For any $\eta > 0$ and $h > 0$ small enough, there exist $c_{0} > 0$,
$M_{0} > 1$ depending on $\eta$, $N$, and $h$, such that
$$
  (P^{h})V( \mathbf{E}) - V(\mathbf{E}) \leq -c_{0} V^{\alpha}(\mathbf{E})
$$
for every $\mathbf{E} \in \{ V > M_{0} \}$, where $\alpha = 1 -
\frac{1}{2(1 - \eta)}$.  
\end{thm}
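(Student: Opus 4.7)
The plan is to prove a pointwise drift inequality at the level of the infinitesimal generator, $\mathcal{L}V(\mathbf{E}) \leq -c\,V(\mathbf{E})^{\alpha}$ on a set $\{V > M_{0}'\}$, and then to upgrade it to the required semigroup bound $P^{h}V - V \leq -c_{0}V^{\alpha}$ via Dynkin's formula. The fact that $V$ lies in the effective domain of $\mathcal{L}$ is precisely the reason for capping the rate at $K$ in Section \ref{modeldesc}.

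The first step is to compute $\mathcal{L}V_{m,k}$ for each block sum $S_{m,k} = \sum_{j=0}^{m-1} E_{k+j}$ and $V_{m,k} = S_{m,k}^{a_{m}\eta - 1}$. A clock strictly interior to the block $\{k, \dots, k+m-1\}$ preserves $S_{m,k}$ and contributes nothing, so only boundary clocks and (when $k = 1$ or $k+m-1 = N$) bath clocks matter. For a left-boundary exchange the substitution $y = 1 - p$ turns the expectation into the elementary identity
$$
\int_{0}^{1}(A + yB)^{a_{m}\eta - 1}\, dy \;=\; \frac{(A+B)^{a_{m}\eta} - A^{a_{m}\eta}}{a_{m}\eta\, B},
$$
with $A = S_{m,k} - E_{k}$ and $B = E_{k-1} + E_{k}$; the right-boundary and bath exchanges give analogous formulas, the bath adding a bounded factor of order $T_{L}^{a_{m}\eta}$ or $T_{R}^{a_{m}\eta}$. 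In the regime where $S_{m,k}$ is much smaller than the adjacent site energy this integral collapses to $-S_{m,k}^{a_{m}\eta - 1}$ plus a positive residue of size $E_{k-1}^{a_{m}\eta - 1}/(a_{m}\eta)$, so the principal contribution to $\mathcal{L}V_{m,k}$ is of order $-R(E_{k-1},E_{k})\,V_{m,k}$. Balancing $R \sim \sqrt{E_{k}} \lesssim \sqrt{S_{m,k}}$ against $V_{m,k}$ gives drift of order $-V_{m,k}^{\alpha_{m}}$ with $\alpha_{m} = 1 - \tfrac{1/2}{1 - a_{m}\eta} \geq \alpha$, matching the stated exponent in the extreme case $a_{m} = 1$.

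The central obstacle is the case when the boundary rate degenerates because $E_{k-1}$ itself is small; the bound above then becomes useless and one must look elsewhere for negative drift. Here the structural inequality $2a_{m-1} - a_{m} > a_{1}$ enters decisively: if $E_{k-1} \leq S_{m,k}$, the enlarged block $\{k-1, \dots, k+m-1\}$ has sum at most $2 S_{m,k}$, and since $a_{m+1} < a_{m}$ the quantity $V_{m+1, k-1}$ is quantitatively larger than $V_{m,k}$ by a factor that blows up as $S_{m,k} \to 0$. Iterating this cascade, one either produces a boundary neighbor with non-negligible energy (restoring the previous rate estimate) or extends the block to a heat-bath end, where the clock rate is still $\sqrt{E_{1}}$ or $\sqrt{E_{N}}$ but the jump injects an exponentially-distributed amount of energy (mean $T_{L}$ or $T_{R}$), producing the same order of negative drift independent of boundary degeneracy. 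Because only finitely many cascade steps are possible, and the $a_{m}$ are chosen so that the gap $2a_{m-1} - a_{m} - a_{1} = (2^{N}-1)^{-1}$ is strictly positive, one can sum the negative contributions from the dominant index and absorb the positive residues coming from the other $V_{m',k'}$.

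To pass from the generator bound to the semigroup bound I would apply Dynkin's formula, $P^{h}V(\mathbf{E}) - V(\mathbf{E}) = \int_{0}^{h} (P^{s}\mathcal{L}V)(\mathbf{E})\, ds$, together with a short-time continuity estimate. With overwhelming probability only finitely many clocks ring in $[0,h]$, and the single-jump analysis of Step~2 bounds the growth of $V^{\alpha}(\mathbf{E}_{s})$ for $s \leq h$. This yields $(P^{s} V^{\alpha})(\mathbf{E}) \geq \tfrac{1}{2} V^{\alpha}(\mathbf{E})$ uniformly on $\{V > M_{0}\}$ for $M_{0}$ sufficiently large, which combined with the generator inequality gives the stated $P^{h}V - V \leq -c_{0}V^{\alpha}$, with $c_{0}$ and $M_{0}$ depending on $\eta$, $N$, and $h$. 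The difficult core is the cascade argument in the third paragraph; once that is set up, the remainder is bookkeeping around the explicit integral identity and standard short-time controls.
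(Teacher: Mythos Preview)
Your overall architecture—prove a pointwise generator inequality $\mathcal{L}V\le -c\,V^{\alpha}$ on $\{V>M'\}$ and then pass to the time-$h$ semigroup—is the paper's strategy in disguise: Lemma~\ref{stoppingtime} is the statement $\mathbb{E}_{\mathbf{E}}[V(\mathbf{E}_{\tau_{1}^{+}})]\le V(\mathbf{E})-\tfrac{C_{1}}{\mathcal{R}}V^{\alpha}(\mathbf{E})$, which after multiplying by the total rate $\mathcal{R}$ is exactly $\mathcal{L}V\le -C_{1}V^{\alpha}$. Where you invoke Dynkin's formula, the paper instead telescopes over the embedded jump times $\tau_{1}<\tau_{2}<\cdots$ in $[0,h]$ and sums a geometric series; both are valid, and your route is arguably cleaner (the lower bound $P^{s}V^{\alpha}\ge\tfrac12 V^{\alpha}$ you need follows at once from the no-jump event, since $\mathbb{P}_{\mathbf{E}}[\tau_{1}>s]\ge e^{-NKs}$).

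There is, however, a genuine gap in the cascade. You write that the principal contribution is $-R(E_{k-1},E_{k})\,V_{m,k}$ and that ``balancing $R\sim\sqrt{E_{k}}\lesssim\sqrt{S_{m,k}}$'' yields drift of order $-V_{m,k}^{\alpha_{m}}$. But $R\le\sqrt{S_{m,k}}$ is an \emph{upper} bound on the rate, hence a \emph{lower} bound on the drift (less negative); to get $\mathcal{L}V_{m,k}\le -c\,V_{m,k}^{\alpha_{m}}$ you need the opposite inequality, and this fails when the leftmost site $E_{k}$ is a tiny fraction of $S_{m,k}$. Your cascade addresses the case ``outside neighbor $E_{k-1}$ small'' by enlarging the block, but the terminal block still has rate $\sqrt{E_{k'}}$ with no lower bound on the \emph{inside} boundary site $E_{k'}$. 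The paper closes this in two places you do not reproduce. First (Step~1 of Lemma~\ref{stoppingtime}), the main drift is extracted from the \emph{maximal} term $V_{\tilde n,\tilde k}$: maximality forces $V_{1,\tilde k}\le V_{\tilde n,\tilde k}$, i.e.\ $E_{\tilde k}^{\eta-1}\le S^{a_{\tilde n}\eta-1}$, which is precisely the lower bound on $E_{\tilde k}$ needed to turn $-\sqrt{E_{\tilde k}}\,V_{\tilde n,\tilde k}$ into $-V_{\tilde n,\tilde k}^{\alpha}$. Second (Lemma~\ref{controllemma2}), the compensation of the remaining large terms uses not only the enlarged block $V_{n',k}$ but also the single-site term $V_{1,k+n'-1}$: when $y=E_{k+n'-1}$ is extremely small, the drop of $V_{n',k}$ carries a factor $\sqrt{y}$ and is too weak, and it is $V_{1,k+n'-1}=y^{\eta-1}$ that supplies the compensation (Case~1 there). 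The inequality $2a_{m-1}-a_{m}>a_{1}$ you cite is used exactly to make this Case~1/Case~2 dichotomy close. Without the maximality argument for the dominant drift and the single-site terms in the compensation, the cascade as you have written it does not close.
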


Let $\mathbf{E} = (E_{1}, \cdots, E_{N})$ be the initial condition. For the sake of simplicity, we let $R_{i} = R(E_{i - 1},
E_{i})$ for all $i = 1, \cdots, N+1$. $\mathbf{E}_{t}$ is given by the
following equivalent description: Starting from $t = 0$, a clock rings
at an exponentially distributed random time $\tau_{1}$ with rate
$\sum_{i = 0}^{N} R_{i}$. When the clock rings, one and only one
energy exchange occurs between site $i$ and $i+1$ with probability 
$$
  \frac{R_{i}}{\sum_{i = 0}^{N} R_{i}} := R_{i}/ \mathcal{R} \,.
$$
In other words, $\mathbb{P}_{\mathbf{E}}[ \mathcal{C}_{i}(\tau_{1})] = R_{i}/ \mathcal{R}$.

Let $\mathbf{E}_{\tau_{1}^{+}}$ be the energy
configuration immediately after the first energy exchange occurs at
$\tau_{1}$. We use $V(\mathbf{E}_{\tau_{1}^{+}})$ to estimate
$P^{h}V(\mathbf{E})$. The strategy of proving Theorem \ref{timestep} is as follows. In Lemma
4.2 and Corollary 4.3, we show that an energy exchange event can
increase a $V_{n, k}$ by at most multiplying with a constant. However, if
$V_{n, k}$ is sufficiently large and the energy stored at its
``boundary site'', i.e., $E_{k-1}$ or $E_{k+n}$, is sufficiently
large, then in Lemma 4.4 we prove that the corresponding energy
exchange event will reduce the value of such a $V_{n, k}$ by at least
one half. Then in Lemma 4.5 we manage to prove that if
$V_{n,k}$ is sufficiently large, the ``expected jump'' of $V_{n,k}$ at
$\tau_{1}$ can be compensated by the ``expected drop'' of $V_{n', k'}/4N^{2}$ for
{\it some} $V_{n', k'}$. This can always be achieved because $V_{n', k'}$
is significantly greater than $V_{n, k}$ when $V_{n', k'}$ ``covers''
$V_{n, k}$. Finally, in Lemma 4.6 we prove that the ``expected drop''
of $V$ at $\tau_{1}$ is proportional to $V^{\alpha}$ for $\alpha = 1 -
\frac{1}{2(1 - \eta)}$. The heuristics of Lemma 4.6 is that for every
sufficiently large $V_{n, k}$, the ``expected jump'' is compensated by
$1/4N^{2}$ of the ``expect drop'' of {\it some} $V_{n', k'}$ (Lemma
4.5). Therefore, at $\tau_{1}$ the total ``expected jump'' of those large $V_{n, k}$s is compensated by
$1/4$ of the largest ``expected drops'' on the left and right side, denoted by $V_{n_{R}, k_{R}}$ and $V_{n_{L},
  k_{L}}$, respectively. Because the ``expected drop'' of at least one $V_{n,
  k}$ is proportional to $V^{\alpha}$, we know that the ``expected drop''
contributed by $V_{n_{R}, k_{R}}$ and $V_{n_{L}, k_{L}}$ minus the
total ``expected jump'' of all large $V_{n,k}$s, is also proportional to
$V^{\alpha}$. On the other hand, Lemma 4.2 implies the ``expected
jump'' of small $V_{n,k}$s at $\tau_{1}$ can be controlled by a
constant. 

\medskip

\begin{lem}
\label{growthlemma}
For any $\mathbf{E} \in \mathbb{R}^{N}_{+}$, $1\leq n \leq N$, $1 \leq k
\leq N - n + 1$, and  $0 < \eta < \frac{1}{2}$, we have
$$
  \mathbb{E}_{\mathbf{E}}[ V_{n,k}( \mathbf{E}_{\tau_{1}^{+}}) \mathbf{1}_{\mathcal{C}_{k}(\tau_{1}) }]
  \leq \frac{C}{\mathcal{R}} \left ( \sum_{ i = 0}^{n-1}E_{k+i}
  \right )^{a_{n}\eta - \frac{1}{2}}
$$
and 
$$
  \mathbb{E}_{\mathbf{E}}[ V_{n,k}( \mathbf{E}_{\tau_{1}^{+}}) \mathbf{1}_{\mathcal{C}_{k + n}(\tau_{1})} ]
  \leq \frac{C}{\mathcal{R}} \left ( \sum_{ i = 0}^{n-1}E_{k+i}
  \right )^{a_{n}\eta - \frac{1}{2}}
$$
for some $C > 0$ depending on $N$ and $\eta$. 
\end{lem}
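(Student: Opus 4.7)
\textbf{Proof proposal for Lemma \ref{growthlemma}.} The plan is to compute the conditional expectation exactly and then extract the claimed bound by splitting on whether $E_k$ or its neighbors dominate the sum $S:=\sum_{j=0}^{n-1} E_{k+j}$. Condition on the event $\mathcal{C}_k(\tau_1)$. From the description of the process, the probability that the clock between sites $k-1$ and $k$ rings first equals $R_k/\mathcal{R}$, and on that event $E_k$ is replaced by $p(E_{k-1}+E_k)$ (or by $p(E_1+X_L)$ in the bath case $k=1$), while $E_{k+1},\dots,E_{k+n-1}$ are untouched. Writing $T=S-E_k=\sum_{j=1}^{n-1}E_{k+j}$ and $S'=E_{k-1}+E_k$, the random variable $Y:=pS'+T$ is uniformly distributed on $[T,T+S']$, so an elementary integration gives
\[
\mathbb{E}_{\mathbf{E}}\!\left[V_{n,k}(\mathbf{E}_{\tau_1^+})\,\mathbf{1}_{\mathcal{C}_k(\tau_1)}\right]
=\frac{R_k}{\mathcal{R}}\cdot\frac{(S'+T)^{a_n\eta}-T^{a_n\eta}}{a_n\eta\,S'}.
\]

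Next I would derive two complementary upper bounds on the right-hand ratio. Since $x\mapsto x^{a_n\eta}$ has decreasing derivative on $(0,\infty)$ (because $a_n\eta-1<0$), the mean value theorem gives $(S'+T)^{a_n\eta}-T^{a_n\eta}\le a_n\eta\,T^{a_n\eta-1}S'$, yielding \emph{Bound A}: $\mathbb{E}[Y^{a_n\eta-1}]\le T^{a_n\eta-1}$, which is sharp when $T$ is not tiny. Subadditivity of $x\mapsto x^{a_n\eta}$ on $(0,\infty)$ gives $(S'+T)^{a_n\eta}\le S'^{a_n\eta}+T^{a_n\eta}$, producing \emph{Bound B}: $\mathbb{E}[Y^{a_n\eta-1}]\le S'^{a_n\eta-1}/(a_n\eta)$, useful when $T$ is small.

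With these in hand I would split on the dichotomy $T\ge E_k$ vs.\ $T<E_k$, using always $R_k\le\sqrt{\min(E_{k-1},E_k)}\le\sqrt{E_k}$. In the first case $E_k\le S/2\le T$, so Bound A combined with $\sqrt{E_k}\le\sqrt{T}$ and $T\ge S/2$ (which gives $T^{a_n\eta-1}\le 2^{1-a_n\eta}S^{a_n\eta-1}$) yields $R_k\mathbb{E}[Y^{a_n\eta-1}]\le 2\,S^{a_n\eta-1/2}$. In the second case $E_k>S/2$, so $S'\ge E_k>S/2$ and Bound B with $\sqrt{E_k}\le\sqrt{S}$ yields $R_k\mathbb{E}[Y^{a_n\eta-1}]\le(2/(a_n\eta))S^{a_n\eta-1/2}$. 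Dividing by $\mathcal{R}$ produces the claimed bound with $C=C(N,\eta)$, since $a_n\eta$ is bounded below uniformly in $n$ for fixed $\eta$ and $N$.

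For the boundary case $k=1$, the same analysis applies after replacing $S'=E_0+E_1$ by $E_1+X_L$ and inserting the density $\tfrac{1}{T_L}e^{-s/T_L}$ in the integral; one integrates out $p$ first (producing the same formula with $s$ in place of $E_{k-1}$) and then the exponential tail in $s$ is harmless because the quantity one has to estimate is already decreasing in $s$ after the case split. The second inequality, for $\mathcal{C}_{k+n}(\tau_1)$, follows by the mirror-image argument: that clock leaves $E_k,\dots,E_{k+n-2}$ untouched and replaces $E_{k+n-1}$ by $p(E_{k+n-1}+E_{k+n})$, so the roles of the two endpoints of the block are simply exchanged. The main obstacle I anticipate is not any single estimate but the bookkeeping: one must keep two bounds on the expectation alive simultaneously and pick the right one on each region of the state space, since neither bound alone gives the $S^{a_n\eta-1/2}$ exponent --- the extra half-power of $E_k$ (respectively $E_{k+n-1}$) that gains us the $-1/2$ in the exponent is precisely what the factor $R_k\le\sqrt{\min(E_{k-1},E_k)}$ provides, and this is only converted into a bound in $S$ after the dichotomy on $T$ versus $E_k$.
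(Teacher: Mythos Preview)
Your proof is correct and follows essentially the same approach as the paper: both arguments split on whether $E_k$ dominates the block sum $S$ (your threshold $T\gtrless E_k$ versus the paper's $T\gtrless 2E_k$), use $R_k\le\sqrt{E_k}$ to supply the extra half-power, and bound the $p$-integral by $T^{a_n\eta-1}$ in one regime and by $(a_n\eta)^{-1}$ times a power of $E_k$ (or $S'$) in the other. The only cosmetic difference is that you evaluate the $p$-integral in closed form and then invoke concavity/subadditivity, whereas the paper bounds the integrand pointwise before integrating; these yield the same two estimates.
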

\begin{proof}
At the first energy exchange, if $k \neq 1$ and $k+n \neq N + 1$, we have
\begin{eqnarray*}
  &&\mathbb{E}_{\mathbf{E}}[V_{n, k}( \mathbf{E}_{\tau_{1}^{+}})
  \mathbf{1}_{\mathcal{C}_{k}(\tau_{1})} ]  =
  \mathbb{E}_{\mathbf{E}}[V_{n, k}( \mathbf{E}_{\tau_{1}^{+}}) \,|\,
  \mathcal{C}_{k}(\tau_{1})]
  \mathbb{P}_{\mathbf{E}}[ \mathcal{C}_{k}(\tau_{1})]\\
&=&
  \frac{\min\{ K, \sqrt{ \min \{E_{k-1}, E_{k}\}} \}}{\mathcal{R}}\int_{0}^{1}  \left [ p
    (E_{k- 1} + E_{k}) + E_{k+1 }
  \cdots + E_{k + n - 1} \right ]^{ a_{n} \eta - 1} \mathrm{d} p := I_{1}
\end{eqnarray*}
Notice that $a_{n}\eta$ is small so $a_{n}\eta - \frac{1}{2} < 0$. If $E_{k} \geq \frac{1}{2} \sum_{i = 1}^{n-1} E_{k+i}$, then
$$
  I_{1} \leq
  \frac{E_{k}^{1/2}}{\mathcal{R}} \cdot E_{k}^{a_{n}\eta - 1}\int_{0}^{1}
  p^{a_{n}\eta - 1} \mathrm{d}p \leq \frac{2}{a_{n}\eta \mathcal{R}} \left ( \sum_{ i = 0}^{n-1}E_{k+i}
  \right )^{a_{n}\eta - \frac{1}{2}} \,.
$$
Otherwise,
\begin{eqnarray*}
    &&I_{1}   \leq \frac{E_{k}^{1/2}}{\mathcal{R}} \cdot (E_{k+1} + \cdots +
    E_{k+n - 1})^{a_{n}\eta - 1}\\
  &\leq& 2 \frac{(E_{k} + \cdots + E_{k+n - 1})^{1/2}}{\mathcal{R}} \cdot (E_{k}  + \cdots +
    E_{k+n - 1})^{a_{n}\eta - 1}  \leq \frac{2}{ \mathcal{R}} \left ( \sum_{ i = 0}^{n-1}E_{k+i}
  \right )^{a_{n}\eta - \frac{1}{2}} \,.
\end{eqnarray*}
The same argument holds for 
\begin{eqnarray*}
&&  \mathbb{E}_{\mathbf{E}}[V_{n, k}( \mathbf{E}_{\tau_{1}^{+}}) \mathbf{1}_{\mathcal{C}_{k+n}(\tau_{1})} ] \\
&=&
  \frac{\min \{ K, \sqrt{ \min \{E_{k+n-1}, E_{k+n}\}} \}}{\mathcal{R}}\int_{0}^{1}  \left [ E_{k} 
  \cdots + p( E_{k + n - 1} + E_{k+n}) \right ]^{ a_{n} \eta - 1} \mathrm{d} p 
\end{eqnarray*}
by discussing cases $E_{k+n - 1} \geq
\frac{1}{2} \sum_{i = 1}^{n-2} E_{k+i}$ and $E_{k+n - 1} < \frac{1}{2}
\sum_{i = 1}^{n-2} E_{k+i}$. 

If $V_{k,n}$ involves the boundary, say $k
= 1$, then 
\begin{eqnarray*}
&&\mathbb{E}_{\mathbf{E}}[V_{n, k}( \mathbf{E}_{\tau_{1}^{+}}) \mathbf{1}_{\mathcal{C}_{k}(\tau_{1}) }] \\
  &=& \frac{\sqrt{ \min \{T_{L},
      E_{1}\}}}{\mathcal{R}} \int_{0}^{\infty} \int_{0}^{1} \frac{1}{T_{L}} \left [ p
    (x + E_{1}) + E_{2 }
  \cdots + E_{n} \right ]^{ a_{n} \eta - 1} e^{-x/T_{L}}\mathrm{d} p
\mathrm{d}x \,.
\end{eqnarray*}
It is easy to check that the same argument above still holds. The case
of $k+n = N+1$ can be estimated analogously. 

The proof is completed by letting $C = \frac{2}{a_{N}\eta} \geq \frac{2}{a_{n}\eta}$.
\end{proof}

The calculation in the proof of Lemma \ref{growthlemma} gives the
following Lemma.

\begin{lem}
\label{cor1}
For any $\mathbf{E} \in \mathbb{R}^{N}_{+}$, $\eta > 0$, $1\leq n \leq N$, and $1 \leq k
\leq N - n + 1$, 
$$
  \mathbb{E}_{\mathbf{E}}[ V_{n,k}( \mathbf{E}_{\tau_{1}^{+}}) \,|\,
  \mathcal{C}_{k}(\tau_{1})]
  \leq C V_{n,k}( \mathbf{E}) 
$$
and 
$$
  \mathbb{E}_{\mathbf{E}}[ V_{n,k}( \mathbf{E}_{\tau_{1}^{+}}) \,|\, \mathcal{C}_{k
    + n}(\tau_{1})  ]
  \leq C V_{n,k}( \mathbf{E}) \,,
$$
where $C > 0$ is the same as that in Lemma
\ref{growthlemma}. 
\end{lem}

\begin{proof}
We have
$$
  \mathbb{E}_{\mathbf{E}}[ V_{n,k}( \mathbf{E}_{\tau_{1}^{+}}) \,|\,
  \mathcal{C}_{k}(\tau_{1}) ] = \int_{0}^{1}  \left [ p
    (E_{k- 1} + E_{k}) + E_{k+1 }
  \cdots + E_{k + n - 1} \right ]^{ a_{n} \eta - 1} \mathrm{d} p \,.
$$
It follows from the same argument as in the proof of Lemma
\ref{growthlemma} that
\begin{eqnarray*}
 & & \int_{0}^{1}  \left [ p
    (E_{k- 1} + E_{k}) + E_{k+1 }
  \cdots + E_{k + n - 1} \right ]^{ a_{n} \eta - 1} \mathrm{d} p \\
&\leq & \max\{ \frac{2}{a_{n}\eta} , 2 \} (E_{k} + \cdots + E_{k+n -
  1} )^{a_{n} \eta - 1} := C V_{n,k}( \mathbf{E}) \,.
\end{eqnarray*}
The proof of 
$$
   \mathbb{E}_{\mathbf{E}}[ V_{n,k}( \mathbf{E}_{\tau_{1}^{+}}) \,|\, \mathcal{C}_{k
    + n}(\tau_{1})  ]
  \leq C V_{n,k}( \mathbf{E})
$$
is similar.
\end{proof}

\begin{lem}
\label{controllemma}
For any $\mathbf{E} \in \mathbb{R}^{N}_{+}$ and any $0 < \eta \ll 1$,
there exists a constant $C'$ depending on $N$, $T_{L}$, $T_{R}$, and
$\eta$, such that whenever $E_{k+n} > C'(E_{k} +
\cdots + E_{k+n - 1})$ (resp. $E_{k- 1} > C'(E_{k} +
\cdots + E_{k+n - 1})$), we have
$$
  \mathbb{E}_{\mathbf{E}}[V_{n, k}( \mathbf{E}_{\tau_{1}^{+}}) \,|\,  \mathcal{C}_{k + n }(
  \tau_{1})] < \frac{1}{2} V_{n, k}(\mathbf{E}) 
$$
( resp. 
$$
  \mathbb{E}_{\mathbf{E}}[V_{n, k}( \mathbf{E}_{\tau_{1}^{+}}) \,|\, \mathcal{C}_{k  }(\tau_{1})] <
  \frac{1}{2} V_{n, k}(\mathbf{E})  \,.
$$
)

\end{lem}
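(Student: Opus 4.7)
The plan is to prove this by a direct one-variable calculation, exploiting the explicit form of the jump together with the fact that the exponent $a_n\eta - 1$ is negative but close to $-1$. Conditional on $\mathcal{C}_{k+n}(\tau_1)$, only the pair $(E_{k+n-1}, E_{k+n})$ is modified, becoming $(pA, (1-p)A)$ with $p$ uniform on $(0,1)$, where $A := E_{k+n-1} + E_{k+n}$. Setting $S := \sum_{j=0}^{n-1} E_{k+j}$ (so that $V_{n,k}(\mathbf{E}) = S^{a_n\eta - 1}$) and $S' := S - E_{k+n-1}$, the conditional expectation becomes the elementary integral
$$
\mathbb{E}_{\mathbf{E}}[V_{n,k}(\mathbf{E}_{\tau_1^+}) \mid \mathcal{C}_{k+n}(\tau_1)] \;=\; \int_0^1 (S' + pA)^{a_n\eta - 1}\,dp.
$$
The hypothesis $E_{k+n} > C'S$ gives $A \ge E_{k+n} > C'S \ge C'S'$, and hence $S' + A \le (1 + 1/C')A \le 2A$ as soon as $C' \ge 1$; this is the only geometric input I will need.

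The second step is to integrate explicitly. With $\beta := 1 - a_n\eta \in (0,1)$, a one-line antiderivative gives
$$
\int_0^1 (S' + pA)^{-\beta}\,dp \;=\; \frac{(S'+A)^{a_n\eta} - (S')^{a_n\eta}}{a_n\eta \cdot A} \;\le\; \frac{(S'+A)^{a_n\eta}}{a_n\eta\,A} \;\le\; \frac{2\,A^{a_n\eta - 1}}{a_n\eta}.
$$
Applying $A > C'S$ and $a_n\eta - 1 < 0$ one more time yields $A^{a_n\eta - 1} \le (C')^{-(1-a_n\eta)}\, V_{n,k}(\mathbf{E})$, and therefore
$$
\mathbb{E}_{\mathbf{E}}[V_{n,k}(\mathbf{E}_{\tau_1^+}) \mid \mathcal{C}_{k+n}(\tau_1)] \;\le\; \frac{2}{a_n\eta\,(C')^{1-a_n\eta}}\,V_{n,k}(\mathbf{E}).
$$
Choose $C'$ so large that this coefficient is below $1/2$ uniformly for every $1 \le n \le N$; this is possible because $a_n\eta \ge a_N\eta > 0$ and $1 - a_n\eta \ge 1 - \eta$, so the explicit threshold $C' > \bigl(4/(a_N\eta)\bigr)^{1/(1-\eta)}$ suffices. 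The symmetric bound for $\mathcal{C}_k(\tau_1)$ under the hypothesis on $E_{k-1}$ is obtained by the same calculation with $A = E_{k-1} + E_k$ and $S' = S - E_k$.

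The only place I expect any real care is the chain boundaries, i.e.\ $k = 1$ with $\mathcal{C}_1$ or $k+n = N+1$ with $\mathcal{C}_{N+1}$. There the jumped pair involves the bath variable $X_L$ (resp.\ $X_R$) rather than a deterministic site energy, so the closed-form computation above has to be averaged additionally against the exponential density $T_L^{-1}e^{-x/T_L}$ (resp.\ $T_R^{-1}e^{-x/T_R}$). I would handle this by splitting the bath integral at $x = T_L/2$ (resp.\ $T_R/2$): on the set where $x$ is at least half the mean the previous pointwise estimate applies verbatim with $A$ replaced by $E_1 + x$, while on the complementary set the exponential tail gives a small multiplicative weight that absorbs a crude $O(1)$ bound on the $p$-integral. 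The outcome is the same bound, at the price of enlarging $C'$ by a factor depending only on $N$ and $\eta$. No genuinely new idea is needed; the main obstacle is really just the bookkeeping required to verify that a single $C'$ works uniformly over all $(n,k)$ and over both the interior and the two boundary cases.
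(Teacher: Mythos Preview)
Your interior argument is correct and is essentially the paper's argument with a minor cosmetic difference: you integrate exactly and then bound, while the paper uses the cruder pointwise estimate $(S'+pA)^{a_n\eta-1}\le (pE_{k+n})^{a_n\eta-1}$ before integrating. Both give the same $(C')^{-(1-a_n\eta)}/(a_n\eta)$ dependence, and your explicit threshold is fine.

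Your boundary sketch, however, does not work as written. The set $\{x<T_R/2\}$ is not a tail set: it carries mass $1-e^{-1/2}\approx 0.39$, so there is no ``small multiplicative weight'' to absorb anything. On that set the only available control on the $p$-integral is the Corollary~\ref{cor1}--type bound $\int_0^1(S'+p(E_{k+n-1}+x))^{a_n\eta-1}\,dp\le \tfrac{2}{a_n\eta}V_{n,k}(\mathbf E)$, which produces a contribution of order $(1-e^{-1/2})\cdot\tfrac{2}{a_n\eta}\,V_{n,k}(\mathbf E)$; this coefficient exceeds $1/2$ for small $\eta$ and does \emph{not} shrink as $C'\to\infty$. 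So no enlargement of $C'$ rescues the splitting.

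The fix is easy, and is exactly what the paper does: drop the splitting entirely. For every $x>0$ one has $(S'+p(E_{k+n-1}+x))^{a_n\eta-1}\le (px)^{a_n\eta-1}$, hence
\[
\mathbb{E}_{\mathbf E}\bigl[V_{n,k}(\mathbf E_{\tau_1^+})\mid \mathcal{C}_{N+1}(\tau_1)\bigr]
\le \frac{1}{a_n\eta}\int_0^\infty \frac{x^{a_n\eta-1}}{T_R}e^{-x/T_R}\,dx
= \frac{\Gamma(a_n\eta)}{a_n\eta}\,T_R^{\,a_n\eta-1},
\]
a fixed constant. The hypothesis in this boundary case reads $T_R=E_{N+1}>C'S$, i.e.\ $S<T_R/C'$, which forces $V_{n,k}(\mathbf E)=S^{a_n\eta-1}>(C')^{1-a_n\eta}T_R^{\,a_n\eta-1}$; choosing $C'$ large enough that $(C')^{1-a_n\eta}>2\Gamma(a_n\eta)/(a_n\eta)$ finishes the job. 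The point is that at the boundary the conditional expectation is bounded by an absolute constant, while the hypothesis forces $V_{n,k}$ itself to be large.
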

\begin{proof}
First assume $k + n \neq N+1$, then we have
\begin{eqnarray*}
  &&\mathbb{E}_{\mathbf{E}}[V_{n, k}( \mathbf{E}_{\tau_{1}^{+}}) \,|\, \mathcal{C}_{k + n
  }(\tau_{1})]\\ &=& \int_{0}^{1 } ( E_{k} + \cdots + E_{k+n - 2}
  + p(E_{k+n - 1} + E_{k+n}))^{a_{n}\eta - 1} \mathrm{d}p\\
& <&
  E_{k+n}^{a_{n}\eta - 1}\int_{0}^{1} p^{a_{n}\eta - 1} \mathrm{d}p\,. 
\end{eqnarray*}
If $E_{k+n} > C'(E_{k} + \cdots + E_{k+n - 1})$, we have
$$
  E_{k+n}^{a_{n}\eta - 1}\int_{0}^{1} p^{a_{n}\eta - 1} \mathrm{d}p
  \leq \frac{C'^{a_{n}\eta - 1}}{a_{n}\eta} V_{n, k}(\mathbf{E}) \,.
$$
To make 
$$
  \frac{C'^{a_{n}\eta - 1}}{a_{n}\eta} \leq \frac{1}{2} \,,
$$
one needs
$$
  C' \geq (\frac{1}{2} a_{n} \eta)^{\frac{1}{a_{n} \eta - 1}} \,.
$$

If $k + n = N+1$, then
\begin{eqnarray*}
  &&\mathbb{E}_{\mathbf{E}}[V_{n, k}( \mathbf{E}_{\tau_{1}^{+}}) \,|\, \mathcal{C}_{k + n
  }(\tau_{1})]\\ &=& \int_{0}^{1 } \int_{0}^{\infty} \frac{1}{T_{R}}( E_{k} + \cdots + E_{k+n - 2}
  + p(E_{k+n - 1} + x))^{a_{n}\eta - 1} e^{-x/T_{R}} \mathrm{d}p \mathrm{d}x\\
& <&
  \int_{0}^{\infty} \frac{1}{T_{R}}x^{a_{n}\eta - 1} e^{-x/T_{R}}
  \mathrm{d}x\int_{0}^{1} p^{a_{n}\eta - 1} \mathrm{d}p\\
&=& T_{R}^{a_{n}\eta - 1} \frac{\Gamma( a_{n} \eta)}{a_{n}\eta} \,,
\end{eqnarray*}
where $\Gamma(\cdot)$ is the Gamma function. Therefore, to make
$$
  \mathbb{E}_{\mathbf{E}}[V_{n, k}( \mathbf{E}_{\tau_{1}^{+}}) \,|\, \mathcal{C}_{k + n }(\tau_{1})]
  < \frac{1}{2} V_{n, k}( \mathbf{E}) \,,
$$
we need
$$
  V_{n, k}(\mathbf{E}) = (E_{k} + \cdots + E_{k+n - 1})^{a_{n}\eta
    - 1} > 2 T_{R}^{a_{n}\eta - 1} \frac{\Gamma( a_{n}
    \eta)}{a_{n}\eta} \,.
$$
Since $E_{N+1} = T_{R}$, this is equivalent to 
\begin{eqnarray*}
  E_{N+1}& >& T_{R} \left [ 2 T_{R}^{a_{n}\eta - 1} \cdot
    \frac{\Gamma(a_{n}\eta)}{a_{n}\eta} \right]^{-\frac{1}{a_{n}\eta -
    1}} (E_{k} + \cdots + E_{k+n - 1})  \\
&&= \left (\frac{2\Gamma(a_{n}\eta) }{a_{n}\eta
}\right)^{-\frac{1}{a_{n}\eta -1}}(E_{k} + \cdots + E_{k+n - 1}) \,.
\end{eqnarray*}
The case for $\mathcal{C}_{k}$ is symmetric and can be
calculated in the same way. By combining all cases, it is easy to
check that if
$$
  C' = \max_{1 \leq n \leq N} \{ ( \frac{1}{2} a_{n}\eta
  )^{\frac{1}{a_{n}\eta - 1}}, \left (\frac{2\Gamma(a_{n}\eta) }{a_{n}\eta
}\right)^{-\frac{1}{a_{n}\eta -1}} \} \,,
$$
we have the desired property for all $n$ and $k$.

\end{proof}

\begin{lem}
\label{controllemma2}
For any $\mathbf{E} \in \mathbb{R}^{N}_{+}$ and any $ 0 < \eta \ll 1$,
there exists an $M < \infty$ depending on $\eta, T_{L}, T_{R}$,
and $N$, such that for any $V_{n, k}(\mathbf{E}) > M$,
if 
$$
  \mathbb{E}_{\mathbf{E}}[ V_{n, k}(\mathbf{E}_{\tau_{1}^{+}}) \mathbf{1}_{
  \mathcal{C}_{k+n}(\tau_{1})}] \geq V_{n, k}(\mathbf{E})
  \mathbb{P}_{\mathbf{E}}[ \mathcal{C}_{k+n}(\tau_{1})]
$$
(resp.
$$
  \mathbb{E}_{\mathbf{E}}[ V_{n, k}(\mathbf{E}_{\tau_{1}^{+}})
  \mathbf{1}_{\mathcal{C}_{k}(\tau_{1})}] \geq V_{n, k}(\mathbf{E}) 
  \mathbb{P}_{\mathbf{E}}[\mathcal{C}_{k}(\tau_{1})] \,, 
$$
)
then there exists $k'$ and $n'$, such that 
\begin{eqnarray*}
  &&\mathbb{E}_{\mathbf{E}}[ V_{n, k}(\mathbf{E}_{\tau_{1}^{+}})
  \mathbf{1}_{ \mathcal{C}_{k + n}(\tau_{1})}] - V_{n,
    k}(\mathbf{E}) \mathbb{P }_{\mathbf{E}}[ \mathcal{C}_{k+n}(\tau_{1})]\\
  &\leq& \frac{1}{4 N^{2}} \left \{ V_{n', k'}(\mathbf{E}) \mathbb{P}_{\mathbf{E}}[
    \mathcal{C}_{k' + n'}(\tau_{1})]-  \mathbb{E}_{\mathbf{E}}[
  V_{n', k'}(\mathbf{E}_{\tau_{1}^{+}})  \mathbf{1}_{ \mathcal{C}_{k'+n'}(\tau_{1})}] \right \} \,.
\end{eqnarray*}
(resp.
\begin{eqnarray*}
  &&\mathbb{E}_{\mathbf{E}}[ V_{n, k}(\mathbf{E}_{\tau_{1}^{+}}) \mathbf{1}_{\mathcal{C}_{k}(\tau_{1})}
  ] - V_{n, k}(\mathbf{E}) \mathbb{P}_{\mathbf{E}}[ \mathcal{C}_{k}(\tau_{1})]\\
  &\leq& \frac{1}{4 N^{2}} \left \{ V_{n', k'}(\mathbf{E}) \mathbb{P}_{\mathbf{E}}[
    \mathcal{C}_{k'}(\tau_{1})]-  \mathbb{E}_{\mathbf{E}}[
  V_{n', k'}(\mathbf{E}_{\tau_{1}^{+}})  \mathbf{1}_{\mathcal{C}_{k'}(\tau_{1})}] \right \} \,.
\end{eqnarray*}
)

\end{lem}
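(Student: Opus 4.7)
The plan is to deduce a structural bound from the hypothesis and then exhibit $(k', n')$ via a two-candidate construction tuned to the configuration. Writing $S := E_k + \cdots + E_{k+n-1}$, the hypothesis together with the contrapositive of Lemma \ref{controllemma} forces $E_{k+n} \le C' S$. First I would iteratively extend the block to the right: set $S_{(0)} = S$ and $S_{(j+1)} = S_{(j)} + E_{k+n+j}$ (with the convention $E_{N+1} = T_R$), and let $m \ge 1$ be the smallest index for which $E_{k+n+m} > C' S_{(m)}$. Since $S_{(j+1)} \le (1+C') S_{(j)}$ at every step before stopping, we have $S_{(j)} \le (1+C')^j S$, so choosing $M$ so large that $S < T_R/(C'(1+C')^N)$ guarantees the stopping rule fires at some $m \le N-k-n+1$.

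Next I would propose two candidates for $(k', n')$, both using the clock $\mathcal{C}_{k+n+m}$. Candidate A takes the whole extended block, $(k', n') = (k, n+m)$. Candidate B takes a single-site block at the last extended position, $(k', n') = (k+n+m-1, 1)$; its hypothesis for Lemma \ref{controllemma} holds automatically because $E_{k+n+m} > C' S_{(m)} \ge C' E_{k+n+m-1}$. Since $E_{k+n+m-1} \le S_{(m)} < E_{k+n+m}$, the rate at the stopping clock simplifies to $R_{k+n+m} = \sqrt{E_{k+n+m-1}}$, and Lemma \ref{controllemma} delivers
\[
\mathrm{Drop}_A \ge \tfrac{1}{2} S_{(m)}^{a_{n+m}\eta - 1} \sqrt{E_{k+n+m-1}}/\mathcal{R}, \qquad \mathrm{Drop}_B \ge \tfrac{1}{2} E_{k+n+m-1}^{\eta - 1/2}/\mathcal{R}.
\]
Meanwhile Lemma \ref{growthlemma} already absorbs the rate into the bound $\mathrm{Jump} \le (C/\mathcal{R}) S^{a_n \eta - 1/2}$.

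A direct computation then shows Candidate A delivers $\mathrm{Drop} \ge 4N^2\, \mathrm{Jump}$ whenever $E_{k+n+m-1} \ge C_1 S^{1 + 2(a_n - a_{n+m})\eta}$, while Candidate B works whenever $E_{k+n+m-1} \le C_2 S^{1 + 2(1 - a_n)\eta}$. These two intervals cover $[0, \infty)$ exactly when $2 a_n - a_{n+m} > 1$, which is precisely the reason for the choice $a_j = 1 - (2^{j-1} - 1)/(2^N - 1)$: indeed $2 a_n - a_{n+m} - 1 = (2^{n+m-1} - 2^n + 1)/(2^N - 1) > 0$ for every $m \ge 1$. Taking $M$ large enough then pushes the Candidate A threshold strictly below the Candidate B threshold, so one of the two candidates succeeds for every value of $E_{k+n+m-1}$. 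The symmetric ``resp.'' statement follows by mirror construction extending to the left. The main obstacle is the rate factor $\sqrt{E_{k+n+m-1}}$ at the stopping clock, which is small exactly when the last extended site is small; neither candidate alone bridges the small-site and large-site regimes, and the arithmetic identity built into the $a_j$ sequence is precisely what lets the two-candidate interleaving cover both regimes without a gap.
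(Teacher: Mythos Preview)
Your proposal is correct and follows essentially the same route as the paper. Both arguments (i) extend the block to the right until the first index where $E_{k+n'} > C'(E_k+\cdots+E_{k+n'-1})$, (ii) offer the same two candidates at that stopping clock --- the extended block $(k,n')$ and the single site $(k+n'-1,1)$ --- and (iii) split into two cases according to the size of $y=E_{k+n'-1}$, using the arithmetic built into the $a_j$ sequence to guarantee the cases overlap. The only cosmetic difference is normalization: the paper first weakens the jump bound from $S^{a_n\eta-1/2}$ to $x^{a_{n'-1}\eta-1/2}$ with $x=E_k+\cdots+E_{k+n'-2}$, so its case split hinges on the \emph{consecutive} inequality $2a_{n'-1}-a_{n'}>1$, whereas you keep the jump in terms of $S$ and invoke $2a_n-a_{n+m}>1$ directly. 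One small imprecision: your Candidate~B exponent $1+2(1-a_n)\eta$ is only the leading-order part of the exact value $(2a_n\eta-1)/(2\eta-1)=1+2(1-a_n)\eta/(1-2\eta)$; since the strict inequality $2a_n-a_{n+m}>1$ leaves room, this is absorbed by taking $\eta$ small, exactly as the paper absorbs its auxiliary $\epsilon$.
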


\begin{proof}
By symmetry, it is sufficient to consider the case of 
$$
  \mathbb{E}_{\mathbf{E}}[ V_{n, k}(\mathbf{E}_{\tau_{1}^{+}}) \mathbf{1}_{
  \mathcal{C}_{k+n}(\tau_{1})}] \geq V_{n, k}(\mathbf{E})
  \mathbb{P}_{\mathbf{E}}[ \mathcal{C}_{k+n}(\tau_{1})] \,.
$$

Let $k' = k$ and $n'$ be the first $n' > n - 1$ such that 
$$
  E_{k + n'} > C' (E_{k} + \cdots + E_{k + n' - 1} )
$$
where $C'$ is the constant defined in Lemma \ref{controllemma}. When
$M$ is large, the sum $E_{k} + \cdots + E_{k + n' - 1} $ is
small. Note that $E_{N+1} = T_{R}$. Hence when $M$ is sufficiently
large, for any $V_{n, k}(\mathbf{E}_{0}) > M$, one can always find
such an $n'$.

By Lemma \ref{controllemma}, if $n' = n$, the energy exchange event
$\mathcal{C}_{k+n}(\tau_{1})$ will only bring the expected value of
$V_{n, k}$ down. Therefore, it is sufficient to consider the case of
$n' \geq n+1$. The lemma follows if one can prove either
\begin{eqnarray*}
 && \mathbb{E}_{\mathbf{E}}[ V_{n, k}( \mathbf{E}_{\tau_{1}^{+}} ) \mathbf{1}_{
 \mathcal{C}_{k+n}(\tau_{1}) }
  ] - V_{n, k}( \mathbf{E}) \mathbb{P}_{\mathbf{E}}[
  \mathcal{C}_{k+n}(\tau_{1})] \\
& \leq& \frac{1}{4 N^{2}}\left \{ V_{n', k}(\mathbf{E}) \mathbb{P}_{\mathbf{E}}[
    \mathcal{C}_{k+n'}(\tau_{1})]-  \mathbb{E}_{\mathbf{E}}[
  V_{n', k}(\mathbf{E}_{\tau_{1}^{+}})  \mathbf{1}_{ \mathcal{C}_{k+n'}(\tau_{1})}]
\right \} := \frac{1}{4 N^{2}} I_{1}
\end{eqnarray*}
or 
\begin{eqnarray*}
 && \mathbb{E}_{\mathbf{E}}[ V_{n, k}( \mathbf{E}_{\tau_{1}^{+}} ) \mathbf{1}_{\mathcal{C}_{k+n}(\tau_{1})}
  ] - V_{n, k}( \mathbf{E}) \mathbb{P}_{\mathbf{E}}[
  \mathcal{C}_{k+n}(\tau_{1})] \\
& \leq& \frac{1}{4 N^{2}}\left \{ V_{1, k+ n' - 1}(\mathbf{E}) \mathbb{P}_{\mathbf{E}}[
    \mathcal{C}_{k+n'}(\tau_{1})]-  \mathbb{E}_{\mathbf{E}}[
  V_{1, k + n' - 1}(\mathbf{E}_{\tau_{1}^{+}})  \mathbf{1}_{ \mathcal{C}_{k+n'}(\tau_{1})} \right \}  := \frac{1}{4N^{2}} I_{2}
\end{eqnarray*}

Let $x = E_{k} + \cdots + E_{k+n' - 2}$, $y = E_{k+n' - 1}$, and $z =
E_{k+n'}$. Since $a_{n'} < a_{n}$, if $x$ is sufficiently small, by
Lemma \ref{growthlemma}, 
\begin{eqnarray*}
  \mathbb{E}_{\mathbf{E}}[V_{n, k}( \mathbf{E}_{\tau_{1}^{+}}) \mathbf{1}_{\mathcal{C}_{k+n}(\tau_{1})}
  ] &\leq& \frac{C}{\mathcal{R}} (E_{k} +\cdots + E_{k+n-1})^{a_{n}\eta -
    \frac{1}{2}} \\
&\leq& \frac{C}{\mathcal{R}}\left ( \frac{1}{N C'^{N-1}} (E_{k} + \cdots +
E_{k+n' - 2} ) \right )^{a_{n}\eta - \frac{1}{2}}\\
&\leq& \frac{1}{\mathcal{R}} x^{a_{n' - 1}\eta -
    \frac{1}{2}} \cdot \left( C \cdot C'^{(\frac{1}{2} -
      a_{n}\eta)(N-1)} \cdot N^{\frac{1}{2 } - a_{n}\eta} \cdot x^{(a_{n} - a_{n' - 1})\eta}\right ) \\
& \leq& \frac{1}{\mathcal{R}} x^{a_{n' - 1}\eta -
    \frac{1}{2}} \,,
\end{eqnarray*}
where $C$ is the constant in Lemma \ref{growthlemma}. The second inequality above follows from 
$$
  E_{k+m} \leq C'(E_{k} + \cdots + E_{k+m - 1})
$$
for each $n \leq m < n'$. 

The requirement of small $x$ can be satisfied by making $M = M(\eta)$
sufficiently large, because
$$
  x \leq N C'^{N-1} (E_{k} + \cdots + E_{k+n - 1}) \leq N C'^{N-1}
  M^{\frac{1}{a_{n}\eta - 1}} \,.
$$
Similarly, we can make $M = M(\eta)$ sufficiently large such that
$$
  y \leq N C'^{N-1}  M^{\frac{1}{a_{n}\eta - 1}} < K \,.
$$
Notice that $y < z$ because $C'$ in Lemma 4.4 is greater than
$1$ ($C' > (\frac{1}{2} a_{n} \eta)^{ (a_{n} \eta - 1)^{-1}}$, $a_{n}
< 1$, $\eta < 1$). Therefore, we have $R(E_{k + n' - 1}, E_{k + n'}) = \sqrt{y}$. Hence we have 
$$
  I_{1} = \left \{ (x+y)^{a_{n'}\eta - 1} - \int_{0}^{1}[ x +
    p(y+z)]^{a_{n'}\eta - 1} \mathrm{d}p \right \} \frac{\sqrt{y}}{\mathcal{R}}
$$
and
$$
  I_{2} = \left \{ y^{a_{1}\eta - 1} - \int_{0}^{1}[
    p(y+z)]^{a_{1}\eta - 1} \mathrm{d}p \right \}
  \frac{\sqrt{y}}{\mathcal{R}} \,.
$$

Since $z > C' (y + x) > C' y$ and $y < C' x$, by Lemma \ref{controllemma}, we have
$$
  I_{1} \geq \frac{1}{2} (x+y)^{a_{n'}\eta -1}
  \frac{\sqrt{y}}{\mathcal{R}} \geq \frac{1}{2(1 + C')} \frac{x^{a_{n'}\eta -
  1} \sqrt{y}}{\mathcal{R}}
$$
and
$$
  I_{2} \geq \frac{1}{2} y^{a_{1}\eta - 1}
  \frac{\sqrt{y}}{\mathcal{R}} \,.
$$

We claim that 
$$
  \max \left \{ \frac{1}{2} y^{a_{1}\eta - \frac{1}{2}} , \frac{1}{2
      (1 + C')}
  x^{a_{n'}\eta-1} \sqrt{y} \right \}  > 4N^{2} x^{a_{n'-1}\eta
  - \frac{1}{2}} \,.
$$
It is easy to see that the lemma follows from this claim.

{\bf Proof of the claim: } Let $\epsilon = \eta^{2}$. The proof is
splited to two cases. 

{\bf Case 1:} $y < x^{1 + 2 (a_{n' - 1} - a_{n'})\eta -
  \epsilon}$, then
$$
  \frac{1}{2} y^{a_{1}\eta - \frac{1}{2}} > \frac{1}{2} x^{ (a_{1}\eta
  - \frac{1}{2}) [ 1 + 2 (a_{n' - 1} - a_{n'}) \eta - \epsilon]} \,.
$$
Note that $2 a_{n' - 1} - a_{n' } > a_{1}$ and $\eta \ll 1$, we have
\begin{eqnarray*}
 & & (a_{1} \eta - \frac{1}{2}) [ 1 + 2(a_{n' - 1} - a_{n'}) \eta -
 \epsilon] \\
&=& -\frac{1}{2} + a_{n'-1}\eta - \epsilon + ( a_{1} + a_{n'} - 2
a_{n' - 1})\eta + \epsilon + \epsilon(\frac{1}{2} - a_{1}\eta) + 2
a_{1}(a_{n' - 1} - a_{n'})\eta^{2}\\
&<& -\frac{1}{2} + a_{n' - 1} \eta - \epsilon
\end{eqnarray*}
if
$$
  (2 a_{n' - 1} - a_{n'} - a_{1}) \eta > \frac{3}{2}\epsilon +  2a_{1}(a_{n' - 1} - a_{n'})\eta^{2}\,,
$$
which can be achieved by making $\eta$ sufficiently small. (Because
$\epsilon = \eta^{2}$.)

Therefore,
$$
  \frac{1}{2} y^{a_{1}\eta - \frac{1}{2}} > \frac{1}{2}x^{-\epsilon}
  \cdot x^{a_{n'-1}\eta - \frac{1}{2}} > 4N^{2}  x^{a_{n' - 1} \eta -
    \frac{1}{2}}  
$$
when $x$ is sufficiently small, which can be made by letting $M$ large
enough. 

{\bf Case 2: } $y \geq x^{1 + 2 (a_{n' - 1} - a_{n'})\eta -
  \epsilon}$. Recall that we have $\epsilon = \eta^{2}$, we have
\begin{eqnarray*}
\frac{1}{2 C'} x^{a_{n'}\eta - 1} \sqrt{y}& \geq  & \frac{1}{2C'}
x^{a_{n'}\eta - 1} \cdot x^{\frac{1}{2} + a_{n' - 1} \eta - a_{n'}\eta
- \frac{\epsilon}{2}}\\
&=& \frac{1}{2 C' } x^{a_{n' - 1}\eta - \frac{1}{2}} \cdot x^{-\frac{\epsilon}{2}} \\
&>& 4 N^{2}  x^{a_{n' - 1} \eta - \frac{1}{2}} 
\end{eqnarray*}
if $x$ is sufficiently small. Again, this can be achieved by letting
$M$ sufficiently large.
\end{proof}

\begin{lem}
\label{stoppingtime}
For any $\mathbf{E} \in \mathbb{R}^{N}_{+}$ and any $0 < \eta \ll 1$, there exist constants $\alpha = 1 - \frac{1}{2(1 -
\eta)}$, $C_{1}$ and $M'$ depending on
$N$ and $\eta$, such that
$$
  \mathbb{E}_{\mathbf{E}}[ V(\mathbf{E}_{\tau_{1}^{+}})] \leq
  V(\mathbf{E}) - \frac{C_{1}}{\mathcal{R}} V^{\alpha}(
  \mathbf{E}) 
$$
for all $V(\mathbf{E}) > M'$. 
\end{lem}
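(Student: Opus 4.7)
The plan is to decompose the expected change per jump by component and by boundary event. Writing
\[
J^{\pm}_{n,k} = \mathbb{E}_{\mathbf{E}}\bigl[V_{n,k}(\mathbf{E}_{\tau_1^+})\mathbf{1}_{\mathcal{C}_*}\bigr] - V_{n,k}(\mathbf{E})\,\mathbb{P}_{\mathbf{E}}[\mathcal{C}_*]
\]
with $\mathcal{C}_*=\mathcal{C}_{k+n}$ for $J^+$ and $\mathcal{C}_*=\mathcal{C}_k$ for $J^-$, and observing that the interior clocks $\mathcal{C}_{k+1},\ldots,\mathcal{C}_{k+n-1}$ preserve every block sum $E_k+\cdots+E_{k+n-1}$ and therefore leave $V_{n,k}$ unchanged, one has $\mathbb{E}_\mathbf{E}[V(\mathbf{E}_{\tau_1^+})]-V(\mathbf{E})=\sum_{n,k}(J^+_{n,k}+J^-_{n,k})$. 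I split the indices into a \emph{large} part $\{V_{n,k}>M\}$ and a \emph{small} part $\{V_{n,k}\leq M\}$ for a constant $M$ large enough that Lemma~\ref{controllemma} and Lemma~\ref{controllemma2} both apply on the large part.

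The small part is a uniformly bounded remainder: Corollary~\ref{cor1} gives $|J^{\pm}_{n,k}|\leq (C-1)MK/\mathcal{R}$, and the number of components is at most $N(N+1)/2$, so the total small contribution is of order $CMKN^2/\mathcal{R}$, independent of $V$. On the large part I treat $J^+$ and $J^-$ symmetrically and concentrate on $J^+$. For every large $(n,k)$ with $J^+_{n,k}>0$, Lemma~\ref{controllemma2} supplies a compensator $(n'(n,k),k)$ with $n'(n,k)>n$ satisfying $J^+_{n,k}\leq (4N^2)^{-1}(-J^+_{n'(n,k),k})$. Summing over the at most $N^2$ such indices, and absorbing those compensators whose $V_{n',k}$ happens to be small into the $O(1/\mathcal{R})$ remainder,
\[
\sum_{V_{n,k}>M,\ J^+_{n,k}>0} J^+_{n,k}\;\leq\;\frac{1}{4}\max_{\text{large}}(-J^+_{n,k})+O(1/\mathcal{R}).
\]
Combining with the drop terms already present in the large-$J^+$ sum gives
\[
\sum_{V_{n,k}>M} J^+_{n,k}\;\leq\;-\frac{3}{4}\max_{\text{large}}(-J^+_{n,k})+O(1/\mathcal{R}),
\]
and the same inequality holds for $J^-$.

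The core step is the lower bound $\max_{\text{large}}(-J^+_{n,k})+\max_{\text{large}}(-J^-_{n,k})\geq cV^\alpha/\mathcal{R}$. Set $V_*:=\max_{n,k}V_{n,k}\geq V/N^2$ and fix an argmax $(n_*,k_*)$. The structural observation I rely on is that $V_{1,j}=E_j^{\eta-1}\leq V_*$ for \emph{every} $j\in\{1,\ldots,N\}$, which forces $E_j\geq V_*^{1/(\eta-1)}$ and hence $\sqrt{E_j}\geq V_*^{-1/(2(1-\eta))}$ uniformly. If Lemma~\ref{controllemma} applies directly to $V_{n_*,k_*}$ on some side, the drop on that side is at least
\[
\frac{1}{2}\,V_{n_*,k_*}\,\sqrt{E_j}/\mathcal{R}\;\geq\;\frac{1}{2}\,V_*\cdot V_*^{-1/(2(1-\eta))}/\mathcal{R}\;=\;\frac{1}{2}\,V_*^\alpha/\mathcal{R}
\]
for the appropriate interior boundary site $j$. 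Otherwise Lemma~\ref{controllemma2} supplies a compensator $V_{n',k_*}$; the geometric growth bound $\sum_{j=0}^{n'-1}E_{k_*+j}\leq (1+C')^{n'-n_*}\sum_{j=0}^{n_*-1}E_{k_*+j}$ forced by the definition of $n'$ yields $V_{n',k_*}\geq c'V_*$, and combining with $\sqrt{E_{k_*+n'-1}}\geq V_*^{-1/(2(1-\eta))}$ produces a compensator drop of at least $c''V_*^\alpha/\mathcal{R}$. Boundary cases, where $n'$ reaches the bath, are covered by $T_L$ or $T_R$ playing the role of the large neighbour, so Lemma~\ref{controllemma} still applies at the bath exchange. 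Collecting the bounds and using $V_*\geq V/N^2$ gives
\[
\mathbb{E}_\mathbf{E}[V(\mathbf{E}_{\tau_1^+})]-V(\mathbf{E})\;\leq\;-c_2\frac{V^\alpha}{\mathcal{R}}+O(1/\mathcal{R}),
\]
so for $V>M'=M'(N,\eta)$ sufficiently large the $O(1/\mathcal{R})$ term is absorbed into a fraction of the leading term, completing the proof with $C_1=c_2/2$. The main obstacle is the third paragraph: the naive guess that $V_*$ is always a single-site $V_{1,k_*}$ is \emph{false} when the energy profile is nearly uniform at an extremely small value (in that regime a long-block such as $V_{N,1}$ wins), so one must instead invoke $V_{1,j}\leq V_*$ for every $j$ as a uniform lower bound on the rate factor $\sqrt{E_j}$, and then verify that balancing the $V_*$ factor against $V_*^{-1/(2(1-\eta))}$ delivers precisely the critical exponent $\alpha=1-1/(2(1-\eta))$ and not a weaker one.
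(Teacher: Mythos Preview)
Your proof is correct and follows essentially the same three-ingredient strategy as the paper: bound the small-$V_{n,k}$ contributions uniformly via Corollary~\ref{cor1}, control the positive large $J^{\pm}_{n,k}$ by compensation (Lemma~\ref{controllemma2}) against the maximal drop, and extract a drop of order $V_*^{\alpha}/\mathcal{R}$ from the argmax block using the rate lower bound $\sqrt{E_j}\ge V_*^{-1/(2(1-\eta))}$ that comes from $V_{1,j}\le V_*$.

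One remark that would streamline your third paragraph: the ``Otherwise'' branch is vacuous. Because $(n_*,k_*)$ is the argmax, you have $V_{n_*,k_*}\ge V_{n_*+1,k_*}$ and $V_{n_*,k_*}\ge V_{n_*+1,k_*-1}$; writing these out with the exponents $a_{n_*}\eta-1$ and $a_{n_*+1}\eta-1$ and using $a_{n_*+1}<a_{n_*}$ forces, once $M'$ is large, both $E_{k_*+n_*}>C'(E_{k_*}+\cdots+E_{k_*+n_*-1})$ and $E_{k_*-1}>C'(E_{k_*}+\cdots+E_{k_*+n_*-1})$. So Lemma~\ref{controllemma} applies to the argmax on \emph{both} sides directly, and you get $-J^+_{n_*,k_*}$ and $-J^-_{n_*,k_*}$ each $\ge \tfrac12 V_*^{\alpha}/\mathcal{R}$ without passing through a compensator. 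This is exactly how the paper's Step~1 is organized, and it also removes any worry about whether $\max_{\text{large}}(-J^{\pm})$ could fail to be positive in your second paragraph.
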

\begin{proof}
{\bf Step 1. }First we show that there exists $\tilde{n}, \tilde{k}$
such that the ``expected drop'' of $V_{\tilde{n}, \tilde{k}}$ at $\tau_{1}$ is
proportional to $V^{\alpha}(\mathbf{E})$. Let $V_{\tilde{n}, \tilde{k}}( \mathbf{E})$ be the maximum of $\{
V_{i,j}(\mathbf{E})\}$. Therefore $V_{\tilde{n}, \tilde{k}}( \mathbf{E}) \geq
\frac{1}{N(N+1)} V( \mathbf{E})$. Since $V_{\tilde{n}, \tilde{k}}(
\mathbf{E})$ is the maximum, we have
$$
  ( E_{\tilde{k}} + \cdots + E_{\tilde{k} + \tilde{n} - 1}
  )^{a_{\tilde{n}}\eta  - 1} > ( E_{\tilde{k}-1} + \cdots + E_{\tilde{k} + \tilde{n} - 1}
  )^{a_{\tilde{n}+1}\eta  - 1} 
$$
and
$$
  ( E_{\tilde{k}} + \cdots + E_{\tilde{k} + \tilde{n} - 1}
  )^{a_{\tilde{n}}\eta  - 1} > ( E_{\tilde{k}} + \cdots + E_{\tilde{k} + \tilde{n} }
  )^{a_{\tilde{n}+1}\eta  - 1}  \,.
$$
Since $a_{\tilde{n} + 1} < a_{\tilde{n}}$, when $M'$ is
sufficiently large, we have
$$
  E_{\tilde{k} - 1} > C' ( E_{\tilde{k}} + \cdots + E_{\tilde{k} +
    \tilde{n} - 1} ) 
$$
and
$$
  E_{\tilde{k} + \tilde{n}} > C' ( E_{\tilde{k}} + \cdots + E_{\tilde{k} +
    \tilde{n} - 1} )  \,,
$$
where $C'$ is as in Lemma \ref{controllemma}. In addition, we have
$$
  E_{\tilde{k}}^{a_{1}\eta - 1} \leq ( E_{\tilde{k}} + \cdots + E_{\tilde{k} + \tilde{n} - 1}
  )^{a_{\tilde{n}}\eta  - 1}
$$
and
  $$
  E_{\tilde{k} + \tilde{n} - 1}^{a_{1}\eta - 1} \leq ( E_{\tilde{k}} + \cdots + E_{\tilde{k} + \tilde{n} - 1}
  )^{a_{\tilde{n}}\eta  - 1} \,.
$$

Since $M'$ is assumed to be sufficiently large, we have $\min\{K,
\min\{E_{\tilde{k} - 1}, E_{\tilde{k}}\} \} = E_{\tilde{k}}$ and $\min\{K,
\min\{E_{\tilde{k}  + \tilde{n}- 1}, E_{\tilde{k} + \tilde{n}}\} \} =
E_{\tilde{k} + \tilde{n} - 1}$. Therefore,
\begin{eqnarray*}
&&  \mathbb{E}_{\mathbf{E}}[ V_{\tilde{n}, \tilde{k}}(
\mathbf{E}_{\tau_{1}^{+}}) ] - V_{\tilde{n}, \tilde{k}}(
\mathbf{E})\\ 
&=& \frac{\sqrt{E_{\tilde{k}}}}{\mathcal{R}} \left \{ \int_{0}^{1} [
  p(E_{\tilde{k} - 1} + E_{\tilde{k}}) + \cdots + E_{\tilde{n} +
    \tilde{k} - 1} ]^{a_{\tilde{n}}\eta - 1} \mathrm{d} p -
  (E_{\tilde{k}} + \cdots + E_{\tilde{k} + \tilde{n} -
      1})^{a_{\tilde{n}}\eta - 1} \right \}\\
&+& \frac{\sqrt{E_{\tilde{k}+ \tilde{n} - 1} }}{\mathcal{R}} \left \{ \int_{0}^{1} [
  E_{\tilde{k}} + \cdots + p(E_{\tilde{n} +
    \tilde{k} - 1} + E_{\tilde{n} + \tilde{k}}) ]^{a_{\tilde{n}}\eta - 1} \mathrm{d} p -
  (E_{\tilde{k}} + \cdots + E_{\tilde{k} + \tilde{n} -1})^{a_{\tilde{n}}\eta - 1} \right \} \\
&:=& I_{1} + I_{2} \,.
\end{eqnarray*}

It follows from the same calculation as in Lemma \ref{controllemma}
that 
\begin{eqnarray}
\label{controlofii}
  I_{i} &\leq& -\frac{1}{2} ( E_{\tilde{k}} + \cdots + E_{\tilde{n} +
    \tilde{k} - 1} )^{a_{\tilde{n}} \eta - 1} \cdot ( E_{\tilde{k}} + \cdots + E_{\tilde{n} +
    \tilde{k} - 1} )^{\frac{ a_{\tilde{n}}\eta - 1}{2 (a_{1}\eta -
      1)}} \cdot \frac{1}{\mathcal{R}}\\\nonumber
&=& - \frac{1}{2 \mathcal{R}} V_{\tilde{n}, \tilde{k}}^{\alpha}( \mathbf{E})
\end{eqnarray}
for $i = 1, 2$, where
$$
  \alpha = 1 - \frac{1}{2(1 - a_{1}\eta)} \,.
$$
\medskip

{\bf Step 2.} Let $M$ be as in Lemma \ref{controllemma2}. Let $\{
(n_{i}, k_{i})\}_{i = 1}^{m}$ be indices for which $V_{n_{i}, k_{i}}(
\mathbf{E}) \geq M$. The aim of this step is to prove that the total
``expected jump'' of these $V_{n_{i}, k_{i}}$ at $\tau_{1}$ is
dominated by the ``expected drop'' of {\it some} $V_{n_{R}, k_{R}}$
and $V_{n_{L}, k_{L}}$ from left and right side, respectively.

By Lemma \ref{controllemma2}, we can construct sequences $\{(\hat{n}'_{i}, \hat{k}'_{i})\}_{ i = 1}^{m}$ and
$\{(\hat{n}''_{i}, \hat{k}''_{i})\}_{ i = 1}^{m}$ such that if
$$
  \mathbb{E}_{\mathbf{E}}[ V_{n_{i}, k_{i}}( \mathbf{E}_{\tau_{1}^{+}})
\mathbf{1}_{\mathcal{C}_{k_{i} +n_{i}}(\tau_{1})}] \geq V_{n_{i}, k_{i}}(
\mathbf{E}) \mathbb{P}_{\mathbf{E}}[ \mathcal{C}_{k_{i} + n_{i}}( \tau_{1})] \,,
$$
then
\begin{eqnarray*}
&&\mathbb{E}_{\mathbf{E}}[ V_{n_{i}, k_{i}}( \mathbf{E}_{\tau_{1}^{+}})\mathbf{1}_{\mathcal{C}_{k_{i}+n_{i}}(\tau_{1})}] - V_{n_{i}, k_{i}}(
\mathbf{E}) \mathbb{P}_{\mathbf{E}}[ \mathcal{C}_{k_{i} + n_{i}}( \tau_{1})]
\\
&\leq& \frac{1}{4N^{2}} \{ V_{\hat{n}'_{i}, \hat{k}'_{i}}(
\mathbf{E}) \mathbb{P}_{\mathbf{E}}[ \mathcal{C}_{\hat{k}'_{i} + \hat{n}'_{i}}(
\tau_{1})] - \mathbb{E}_{\mathbf{E}}[ V_{\hat{n}'_{i}, \hat{k}'_{i}}( \mathbf{E}_{\tau_{1}^{+}})
\mathbf{1}_{\mathcal{C}_{\hat{k}'_{i}+\hat{n}'_{i}}(\tau_{1})}] \}
\end{eqnarray*}
and if
$$
  \mathbb{E}_{\mathbf{E}}[ V_{n_{i}, k_{i}}( \mathbf{E}_{\tau_{1}^{+}})
  \mathbf{1}_{\mathcal{C}_{k_{i}}(\tau_{1})} ] \geq
    V_{n_{i}, k_{i}}( 
\mathbf{E}) \mathbb{P}_{\mathbf{E}}[ \mathcal{C}_{k_{i}}( \tau_{1})] \,,
$$
then
\begin{eqnarray*}
&&\mathbb{E}_{\mathbf{E}}[ V_{n_{i}, k_{i}}( \mathbf{E}_{\tau_{1}^{+}})
\mathbf{1}_{\mathcal{C}_{k_{i}}(\tau_{1})}] - V_{n_{i}, k_{i}}(
\mathbf{E}) \mathbb{P}_{\mathbf{E}}[ \mathcal{C}_{k_{i}}( \tau_{1})]
\\
&\leq& \frac{1}{4N^{2}} \{ V_{\hat{n}''_{i}, \hat{k}''_{i}}(
\mathbf{E}) \mathbb{P}_{\mathbf{E}}[ \mathcal{C}_{\hat{k}''_{i}}(
\tau_{1})] - \mathbb{E}_{\mathbf{E}}[ V_{\hat{n}''_{i},
  \hat{k}''_{i}}( \mathbf{E}_{\tau_{1}^{+}}) \mathbf{1}_{\mathcal{C}_{\hat{k}''_{i}}(\tau_{1})}] \} \,.
\end{eqnarray*}
When condition
$$
  \mathbb{E}_{\mathbf{E}}[ V_{n_{i}, k_{i}}( \mathbf{E}_{\tau_{1}^{+}})
\mathbf{1}_{\mathcal{C}_{k_{i}+n_{i}}(\tau_{1})}] \geq V_{n_{i}, k_{i}}(
\mathbf{E}) \mathbb{P}_{\mathbf{E}}[ \mathcal{C}_{k_{i} + n_{i}}( \tau_{1})] 
$$
(resp.
$$
  \mathbb{E}_{\mathbf{E}}[ V_{n_{i}, k_{i}}( \mathbf{E}_{\tau_{1}^{+}})
  \mathbf{1}_{\mathcal{C}_{k_{i}}(\tau_{1})} ] \geq
    V_{n_{i}, k_{i}}( 
\mathbf{E}) \mathbb{P}_{\mathbf{E}}[ \mathcal{C}_{k_{i}}( \tau_{1})] 
$$
)
is not satisfied, we simply let $\hat{n}'_{i} = n_{i}, \hat{k}'_{i} =
k_{i}$ (resp. $\hat{n}''_{i} = n_{i}, \hat{k}''_{i} =
k_{i}$). Therefore, the ``expected jump'' of $V_{n_{i}, k_{i}}$ at
$\tau_{1}$ is dominated by the right and left ``expected drop'' of
$V_{\hat{n}'_{i}, \hat{k}'_{i}}$ and $V_{\hat{n}''_{i},
  \hat{k}''_{i}}$, respectively. 

Let $(n_{R}, k_{R})$ be the index for which $V_{n_{R}, k_{R}}(\mathbf{E})$ has
biggest ``right drop'' at $\tau_{1}$, i.e., 
\begin{eqnarray*}
 & & V_{n, k}(\mathbf{E}) \mathbb{P}_{\mathbf{E}}[ \mathcal{C}_{k+n}(
\tau_{1})] - \mathbb{E}_{\mathbf{E}}[ V_{n,k}( \mathbf{E}_{\tau_{1}^{+}})
\mathbf{1}_{\mathcal{C}_{k+n}(\tau_{1})}] \\
&\leq & V_{n_{R}, k_{R}}(\mathbf{E}) \mathbb{P}_{\mathbf{E}}[ \mathcal{C}_{k_{R}+n_{R}}(
\tau_{1})] - \mathbb{E}_{\mathbf{E}}[ V_{n_{R},k_{R}}(
\mathbf{E}_{\tau_{1}^{+}}) \mathbf{1}_{\mathcal{C}_{k_{R} +n_{R}}(\tau_{1})}]
\end{eqnarray*}
for all $n, k$, and $(n_{L}, k_{L})$ be the index for which $V_{n_{L}, k_{L}}( \mathbf{E})$ 
has the biggest ``left drop'', i.e.
\begin{eqnarray*}
 & & V_{n, k}(\mathbf{E}) \mathbb{P}_{\mathbf{E}}[ \mathcal{C}_{k}(
\tau_{1})] - \mathbb{E}_{\mathbf{E}}[ V_{n,k}( \mathbf{E}_{\tau_{1}^{+}})
\mathbf{1}_{\mathcal{C}_{k}(\tau_{1})}] \\
&\leq & V_{n_{L}, k_{L}}(\mathbf{E}) \mathbb{P}_{\mathbf{E}}[ \mathcal{C}_{k_{L}}(
\tau_{1})] - \mathbb{E}_{\mathbf{E}}[ V_{n_{L},k_{L}}(
\mathbf{E}_{\tau_{1}^{+}}) \mathbf{1}_{\mathcal{C}_{k_{L}}(\tau_{1})}] 
\end{eqnarray*}
for all $n, k$. Both terms should be positive when $M'$ is
sufficiently large because of the argument about $V_{\tilde{n},
  \tilde{k}}$ in {\bf Step 1}. 

Since there is only $\frac{1}{2}N(N+1)$ $V_{n,k}$ s, we have $m \leq
\frac{1}{2}N(N+1)$. Hence
\begin{eqnarray*}
 & &  \sum_{i = 1}^{m} \{\mathbb{E}_{\mathbf{E}}[ V_{n_{i}, k_{i}}(
 \mathbf{E}_{\tau_{1}^{+}})] - V_{n_{i},
   k_{i}}(\mathbf{E}) \}\\
&=& \sum_{i = 1}^{m} \{\mathbb{E}_{\mathbf{E}}[ V_{n_{i}, k_{i}}(
\mathbf{E}_{\tau_{1}^{+}}) \mathbf{1}_{\mathcal{C}_{k_{i}+n_{i}}(\tau_{1})}] - V_{n_{i}, k_{i}}(
\mathbf{E}) \mathbb{P}_{\mathbf{E}}[ \mathcal{C}_{k_{i} + n_{i}}( \tau_{1})] \\
&&+ \mathbb{E}_{\mathbf{E}}[ V_{n_{i}, k_{i}}( \mathbf{E}_{\tau_{1}^{+}})
\mathbf{1}_{\mathcal{C}_{k_{i}}(\tau_{1})}] - V_{n_{i}, k_{i}}(
\mathbf{E}) \mathbb{P}_{\mathbf{E}}[ \mathcal{C}_{k_{i}}( \tau_{1})] \}\\
&\leq& \frac{1}{4N^{2}} \sum_{i = 1}^{m} \left \{
  \max\{V_{\hat{n}'_{i}, \hat{k}'_{i}}(
\mathbf{E}) \mathbb{P}_{\mathbf{E}}[ \mathcal{C}_{\hat{k}'_{i} + \hat{n}'_{i}}(
\tau_{1})] - \mathbb{E}_{\mathbf{E}}[V_{\hat{n}'_{i}, \hat{k}'_{i}}(
  \mathbf{E}_{\tau_{1}^{+}}) \mathbf{1}_{
\mathcal{C}_{\hat{k}'_{i}+\hat{n}'_{i}}(\tau_{1})}] , 0 \} \right .\\
& &+ \left . \max \{ V_{\hat{n}''_{i}, \hat{k}''_{i}}(
\mathbf{E}) \mathbb{P}_{\mathbf{E}}[ \mathcal{C}_{\hat{k}''_{i}}(
    \tau_{1}) ]- \mathbb{E}_{\mathbf{E}}[ V_{\hat{n}''_{i}, \hat{k}''_{i}}( \mathbf{E}_{\tau_{1}^{+}})
\mathbf{1}_{\mathcal{C}_{\hat{k}''_{i}}(\tau_{1})} ]   ,
0\} \right \}
\\
&\leq& \frac{1}{4} \{ V_{n_{R}, k_{R}}(\mathbf{E}) \mathbb{P}_{\mathbf{E}}[ \mathcal{C}_{k_{R}+n_{R}}(
\tau_{1})] - \mathbb{E}_{\mathbf{E}}[ V_{n_{R}, k_{R}}( \mathbf{E}_{\tau_{1}^{+}})
\mathbf{1}_{\mathcal{C}_{k_{R}+n_{R}}(\tau_{1})}] \\
&& + V_{n_{L}, k_{L}}(\mathbf{E}) \mathbb{P}_{\mathbf{E}}[ \mathcal{C}_{k_{L}}(
\tau_{1})] - \mathbb{E}_{\mathbf{E}}[ V_{n_{L},k_{L}}(
\mathbf{E}_{\tau_{1}^{+}}) \mathbf{1}_{\mathcal{C}_{k_{L}}(\tau_{1})}]  \} \,.
\end{eqnarray*}

{\bf Step 3.} Then we can finalize the entire proof. We have
\begin{eqnarray*}
 & & \mathbb{E}_{\mathbf{E}}( V( \mathbf{E}_{\tau_{1}^{+}}) ) - V(
 \mathbf{E}) \\
&=& \sum_{n, k} \{\mathbb{E}_{\mathbf{E}}[ V_{n, k}(
\mathbf{E}_{\tau_{1}^{+}}) \mathbf{1}_{\mathcal{C}_{k+n}(\tau_{1})}] - V_{n, k}(
\mathbf{E}) \mathbb{P}_{\mathbf{E}}[ \mathcal{C}_{k + n}( \tau_{1})] \\
&&+ \mathbb{E}_{\mathbf{E}}[ V_{n, k}( \mathbf{E}_{\tau_{1}^{+}})
\mathbf{1}_{\mathcal{C}_{k}(\tau_{1})}] - V_{n, k}(
\mathbf{E}) \mathbb{P}_{\mathbf{E}}[ \mathcal{C}_{k}( \tau_{1})] \}
   \quad \mbox{(Law of total expectation)}\\
&=& \sum_{i = 1}^{m} \{\mathbb{E}_{\mathbf{E}}[ V_{n_{i}, k_{i}}(
\mathbf{E}_{\tau_{1}^{+}}) \mathbf{1}_{\mathcal{C}_{k_{i}+n_{i}}(\tau_{1})}] - V_{n_{i}, k_{i}}(
\mathbf{E}) \mathbb{P}_{\mathbf{E}}[ \mathcal{C}_{k_{i} + n_{i}}( \tau_{1})] \\
&&+ \mathbb{E}_{\mathbf{E}}[ V_{n_{i}, k_{i}}( \mathbf{E}_{\tau_{1}^{+}})
\mathbf{1}_{\mathcal{C}_{k_{i}}(\tau_{1})}] - V_{n_{i}, k_{i}}(
\mathbf{E}) \mathbb{P}_{\mathbf{E}}[ \mathcal{C}_{k_{i}}( \tau_{1})]
\} \\
&& + \sum_{k,n \, \, \,V_{n,k} < M} \{ \mathbb{E}_{\mathbf{E}}[ V_{n, k}(
 \mathbf{E}_{\tau_{1}^{+}})] - V_{n, k}(\mathbf{E})
 \} \qquad \mbox{ (By Lemma 4.5)}\\
&\leq& \frac{3}{4}  \{  \mathbb{E}_{\mathbf{E}}[ V_{n_{R},k_{R}}(
\mathbf{E}_{\tau_{1}^{+}}) \mathbf{1}_{\mathcal{C}_{n_{R} +
    k_{R}}(\tau_{1})}] -V_{n_{R}, k_{R}}(\mathbf{E})
\mathbb{P}_{\mathbf{E}}[ \mathcal{C}_{n_{R}+k_{R}}( \tau_{1})] \\
&& +\mathbb{E}_{\mathbf{E}}[ V_{n_{L}, k_{L}}( \mathbf{E}_{\tau_{1}^{+}})
\mathbf{1}_{\mathcal{C}_{k_{L}}(\tau_{1})}] - V_{n_{L}, k_{L}}(\mathbf{E}) \mathbb{P}_{\mathbf{E}}[ \mathcal{C}_{k_{L}}(
\tau_{1})]  \} \\
&&+ \sum_{k,n \, \, \,V_{n,k} < M} \{ \mathbb{E}_{\mathbf{E}}[ V_{n, k}(
 \mathbf{E}_{\tau_{1}^{+}}) ] - V_{n, k}(\mathbf{E})
 \}  \qquad \mbox{(By {\bf Step 2})}\\
&\leq& \frac{3}{4}(I_{3} + I_{4}) + I_{5}  \,,
\end{eqnarray*}
where
$$
  I_{3} = \mathbb{E}_{\mathbb{E}}[ V_{n_{R},k_{R}}(
\mathbf{E}_{\tau_{1}^{+}}) \mathbf{1}_{\mathcal{C}_{n_{R} +
    k_{R}}(\tau_{1})}] -V_{n_{R}, k_{R}}(\mathbf{E})
\mathbb{P}_{\mathbf{E}}[ \mathcal{C}_{n_{R}+k_{R}}( \tau_{1})]  \,,
$$
$$
  I_{4} = \mathbb{E}_{\mathbf{E}}[ V_{n_{L}, k_{L}}( \mathbf{E}_{\tau_{1}^{+}})
\mathbf{1}_{\mathcal{C}_{k_{L}}(\tau_{1})}] - V_{n_{L}, k_{L}}(\mathbf{E}) \mathbb{P}_{\mathbf{E}}[ \mathcal{C}_{k_{L}}(
\tau_{1})]   \,,
$$
and
$$
  I_{5} =  \sum_{k,n \, \, \,V_{n,k} < M} \{ \mathbb{E}_{\mathbf{E}}[ V_{n, k}(
 \mathbf{E}_{\tau_{1}^{+}}) ] - V_{n, k}(\mathbf{E})
 \}  \,.
$$
It follows from Lemma \ref{cor1} that
\begin{eqnarray*}
I_{5}&=& \sum_{k,n \, \, \,V_{n, k} < M} \{\mathbb{E}_{\mathbf{E}}[ V_{n, k}( \mathbf{E}_{\tau_{1}^{+}})
\mathbf{1}_{\mathcal{C}_{k+n}(\tau_{1})}] - V_{n, k}(
\mathbf{E}) \mathbb{P}_{\mathbf{E}}[ \mathcal{C}_{k + n}( \tau_{1})] \\
&&+ \mathbb{E}_{\mathbf{E}}[ V_{n, k}( \mathbf{E}_{\tau_{1}^{+}}),
\mathbf{1}_{\mathcal{C}_{k}(\tau_{1})}] - V_{n, k}(
\mathbf{E}) \mathbb{P}_{\mathbf{E}}[ \mathcal{C}_{k}( \tau_{1})] \}\\
&\leq&\sum_{k,n \, \, \,V_{n,k} < M} \{\mathbb{E}_{\mathbf{E}}[ V_{n, k}(
\mathbf{E}_{\tau_{1}^{+}}) \,|\, \mathcal{C}_{k+n}(\tau_{1})] \mathbb{P}_{\mathbf{E}}[ \mathcal{C}_{k+n}(\tau_{1})]  +
\mathbb{E}_{\mathbf{E}}[ V_{n, k}( \mathbf{E}_{\tau_{1}^{+}}) \,|\ 
\mathcal{C}_{k}(\tau_{1})]
\mathbb{P}_{\mathbf{E}}[\mathcal{C}_{k}(\tau_{1})]\} \\ 
&\leq&  \sum_{k,n \, \, \,V_{n,k} < M} \{\mathbb{E}_{\mathbf{E}}[ V_{n, k}(
\mathbf{E}_{\tau_{1}^{+}}) \,|\, \mathcal{C}_{k+n}(\tau_{1})]   +
\mathbb{E}_{\mathbf{E}}[ V_{n, k}( \mathbf{E}_{\tau_{1}^{+}}) \,|\ 
\mathcal{C}_{k}(\tau_{1})] \} \cdot \frac{2K}{\mathcal{R}}\\
&\leq&N(N+1)CMK\cdot \frac{1}{\mathcal{R}} \,.
\end{eqnarray*}
In addition, by the definition of $n_{R}, k_{R}$ and $n_{L}, k_{L}$,
we have
\begin{eqnarray*}
 & & V_{\tilde{n}, \tilde{k}}(\mathbf{E}) \mathbb{P}_{\mathbf{E}}[ \mathcal{C}_{\tilde{k}+\tilde{n}}(
\tau_{1})] - \mathbb{E}_{\mathbf{E}}[ V_{\tilde{n},\tilde{k}}( \mathbf{E}_{\tau_{1}^{+}})
\mathbf{1}_{\mathcal{C}_{\tilde{k}+\tilde{n}}(\tau_{1})}] \\
&\leq & V_{n_{R}, k_{R}}(\mathbf{E}) \mathbb{P}_{\mathbf{E}}[ \mathcal{C}_{k_{R}+n_{R}}(
\tau_{1})] - \mathbb{E}_{\mathbf{E}}[ V_{n_{R},k_{R}}(
\mathbf{E}_{\tau_{1}^{+}}) \mathbf{1}_{\mathcal{C}_{k_{R} +n_{R}}(\tau_{1})}]
\end{eqnarray*}
and
\begin{eqnarray*}
 & & V_{\tilde{n}, \tilde{k}}(\mathbf{E}) \mathbb{P}_{\mathbf{E}}[ \mathcal{C}_{\tilde{k}}(
\tau_{1})] - \mathbb{E}_{\mathbf{E}}[ V_{\tilde{n},\tilde{k}}( \mathbf{E}_{\tau_{1}^{+}})
\mathbf{1}_{\mathcal{C}_{\tilde{k}}(\tau_{1})}] \\
&\leq & V_{n_{L}, k_{L}}(\mathbf{E}) \mathbb{P}_{\mathbf{E}}[ \mathcal{C}_{k_{L}}(
\tau_{1})] - \mathbb{E}_{\mathbf{E}}[ V_{n_{L},k_{L}}(
\mathbf{E}_{\tau_{1}^{+}}) \mathbf{1}_{\mathcal{C}_{k_{L}}(\tau_{1})}]  \,,
\end{eqnarray*}
where $\tilde{n}$ and $\tilde{k}$ are from {\bf Step 1}.

By inequality \eqref{controlofii} in {\bf Step 1}, we have
\begin{eqnarray*}
  I_{3} + I_{4} &\leq& (\mathbb{E}_{\mathbf{E}}[ V_{\tilde{n},\tilde{k}}( \mathbf{E}_{\tau_{1}^{+}})
\mathbf{1}_{\mathcal{C}_{\tilde{k}+\tilde{n}}(\tau_{1})}]  - V_{\tilde{n}, \tilde{k}}(\mathbf{E}) \mathbb{P}_{\mathbf{E}}[ \mathcal{C}_{\tilde{k}+\tilde{n}}(
\tau_{1})] )\\
&& + (\mathbb{E}_{\mathbf{E}}[ V_{\tilde{n},\tilde{k}}( \mathbf{E}_{\tau_{1}^{+}})
\mathbf{1}_{\mathcal{C}_{\tilde{k}}(\tau_{1})}]  - V_{\tilde{n}, \tilde{k}}(\mathbf{E}) \mathbb{P}_{\mathbf{E}}[ \mathcal{C}_{\tilde{k}}(
\tau_{1})]  )\\
&= & \mathbb{E}_{\mathbf{E}}[ V_{\tilde{n}, \tilde{k}}(
\mathbf{E}_{\tau_{1}^{+}})] - V_{\tilde{n}, \tilde{k}}( \mathbf{E}) \\
&\leq& -\frac{1}{\mathcal{R}} V_{\tilde{n},
    \tilde{k}}^{\alpha}( \mathbf{E}) \leq - \frac{1}{\mathcal{R}}
  \cdot \left ( \frac{1}{N(N+1)} \right )^{\alpha} V^{\alpha}(
  \mathbf{E}) \,.
\end{eqnarray*}

One can then further choose $M' > M$ such that
$$
  \frac{1}{4\mathcal{R}} \cdot (\frac{1}{N(N+1)})^{\alpha}
  M'^{\alpha} > I_{5} \,.
$$
Therefore, for every $\mathbf{E}$ such that $V(E) > M'$, we
have
$$
  \mathbb{E}_{\mathbf{E}}( V( \mathbf{E}_{\tau_{1}^{+}})) - V(
 \mathbf{E}) \leq -\frac{1}{\mathcal{R}} \cdot \frac{1}{2}
 (\frac{1}{N(N+1)})^{\alpha} V^{\alpha}( \mathbf{E}) :=
 -\frac{C_{1}}{\mathcal{R}} V^{\alpha}( \mathbf{E})\,.
$$
This completes the proof.

\end{proof}

{\bf \noindent Proof of Theorem \ref{timestep}.}

Let $0 = \tau_{0} < \tau_{1} < \tau_{2} < \cdots $ be the times of clock rings. Let
$B_{0} = \{ \mathbf{E} \,|\, V(\mathbf{E}) > M'\}$ where $M'$ is the
constant in Lemma \ref{stoppingtime}. By the Markov property and Lemma \ref{stoppingtime}, for
any $\mathbf{E} \in \mathbb{R}^{N}_{+}$ we
have 
\begin{equation}
\label{eq1}
  \mathbb{E}_{\mathbf{E}}[ V( \mathbf{E}_{\tau_{n+1}^{+}}) \,|\,
  \mathbf{E}_{\tau_{n}^{+}}] = \mathbb{E}_{\mathbf{E}_{\tau_{n}^{+}}}[ V( \mathbf{E}_{\tau_{1}^{+}}) ]
\leq V(\mathbf{E}_{\tau_{n}^{+}}) -
  \frac{C_{1}}{\mathcal{R}} V(\mathbf{E}_{\tau_{n}^{+}})^{\alpha} 
\end{equation}
if $\mathbf{E}_{\tau_{n}^{+}} \in B_{0}$. Otherwise, for each
$\mathbf{E}_{\tau_{n}^{+}} \notin B_{0}$, by the Markov property we have the uniform bound 
\begin{eqnarray}
\label{eq2}
 &&\mathbb{E}_{\mathbf{E}}[ V( \mathbf{E}_{\tau_{n+1}^{+}}) \,| \, \mathbf{E}_{\tau_{n}^{+}} ] \\\nonumber
 & = & \sum_{m = 1}^{N}\sum_{k = 1}^{N - m + 1} \left \{\mathbb{E}_{\mathbf{E}_{\tau_{n}^{+}}}[ V_{m, k}(
  \mathbf{E}_{\tau_{1}^{+}}) \,|\, \mathcal{C}_{k}(\tau_{1})]
  \mathbb{P}_{\mathbf{E}_{\tau_{n}^{+}}}[ \mathcal{C}_{k}(\tau_{1})]
       \right .
  \\\nonumber
&&+
  \left .\mathbb{E}_{\mathbf{E}_{\tau_{n}^{+}}}[ V_{m, k}(
  \mathbf{E}_{\tau_{1}^{+}}) \,|\, \mathcal{C}_{k + m}(\tau_{1})]
   \mathbb{P}_{\mathbf{E}_{\tau_{n}^{+}}}[ \mathcal{C}_{k +
   m}(\tau_{1})] \right \}
  \\\nonumber
&\leq& \sum_{m = 1}^{N}\sum_{k = 1}^{N - m + 1} C V_{m, k}( \mathbf{E}_{\tau_{n}^{+}}) (
       \mathbb{P}_{\mathbf{E}_{\tau_{n}^{+}}}[
       \mathcal{C}_{k}(\tau_{1})] + \mathbb{P}_{\mathbf{E}_{\tau_{n}^{+}}}[ \mathcal{C}_{k +
   m}(\tau_{1})]  ) \quad \mbox{ (By Lemma 4.3 )}\\\nonumber
&\leq& C \sum_{m = 1}^{N}\sum_{k = 1}^{N - m + 1} V_{m, k}( \mathbf{E}_{\tau_{n}^{+}}) \leq  C M' \,,
\end{eqnarray}
where $C$ is the constant in Lemma 4.2. 

Let $S = \inf \{ n \,|\, \tau_{n} > h \}$, and define $\hat{\tau}_{n} =
\min \{ \tau_{n}, \tau_{S-1 } \}$. Then 
$$
  P^{h}V( \mathbf{E}) = \lim_{n \rightarrow \infty} \mathbb{E}_{\mathbf{E}}[ V(
  \mathbf{E}_{\hat{\tau}_{n}^{+}}) \mathbf{1}_{\{S\leq n+1 \}}] \leq \limsup_{n\rightarrow \infty} \mathbb{E}_{\mathbf{E}}[ V(
  \mathbf{E}_{\hat{\tau}_{n}^{+}}) ]  \,.
$$ 
We will prove a uniform bound for $\mathbb{E}_{\mathbf{E}}[ V(
  \mathbf{E}_{\hat{\tau}_{n}^{+}}) ] $. Equation \eqref{eq1} implies the
  expectation of $V$ drops when starting from $B_{0}$. Equation
  \eqref{eq2} means the expectation of $V$ can grow with $C M'$ at
  most. By assuming the worse of \eqref{eq1}
  and \eqref{eq2}, we have
$$
  \mathbb{E}_{\mathbf{E}}[ V(
  \mathbf{E}_{\hat{\tau}_{n+1}^{+}})  \,|\,
  \tau_{n+1} \leq h]  \leq \mathbb{E}_{\mathbf{E}}[ V(
  \mathbf{E}_{\hat{\tau}_{n}^{+}})  \,|\,
  \tau_{n+1} \leq h] +  C M'
$$
for every $n \geq 0$. Notice that for given $\mathbf{E}_{\tau_{n}^{+}}$,
$\mathbf{E}_{\tau_{n+1}^{+} }$ is independent of $\tau_{n+1} -
\tau_{n}$. Therefore conditioning on $\tau_{n+1 } \leq h$ does not
affect the bounds in \eqref{eq1} and \eqref{eq2}. Since $\mathcal{R}
\leq (N + 1)K$, for each $\mathbf{E}_{\tau_{n}^{+}}$, we have
$$
  \mathbb{P}[ \tau_{n+1} \leq h | \mathbf{E}_{\tau_{n}^{+}}, \tau_{n}
  \leq h] \leq (1 - e^{-hNK}) \,.
$$
Therefore, inductively we have
$$
  \mathbb{P}_{\mathbf{E}}[ \tau_{n+1} \leq h ] \leq (1 -
  e^{-hNK}) ^{n+1}
$$
for all $n \geq 0$. This implies
\begin{eqnarray}
\label{eq4} 
& \ & \mathbb{E}_{\mathbf{E}}[ V( \mathbf{E}_{\hat{\tau}_{n+1}^+})] \\\nonumber
& = & \mathbb{E}_{\mathbf{E}}[ V( \mathbf{E}_{\hat{\tau}_{n+1}^+}) \,|\, \tau_{n+1}>h] 
\cdot \mathbb P_{\mathbf{E}}[\tau_{n+1}>h] + 
\mathbb{E}_{\mathbf{E}}[ V( \mathbf{E}_{\hat{\tau}_{n+1}^+}) \,|\, \tau_{n+1}\leq h] 
\cdot \mathbb P_{\mathbf{E}}[\tau_{n+1}\leq h]\\\nonumber
& \le & \mathbb{E}_{\mathbf{E}}[ V( \mathbf{E}_{\hat{\tau}_{n}^+}) \,|\, \tau_{n+1}>h] 
\cdot \mathbb P_{\mathbf{E}}[\tau_{n+1}>h] \\\nonumber
& & \hskip 1cm + 
\left(\mathbb{E}_{\mathbf{E}}[ V( \mathbf{E}_{\hat{\tau}_{n}^+}) \,|\,
  \tau_{n+1}\leq h] +
C M'\right) \cdot \mathbb P_{\mathbf{E}}[\tau_{n+1} \leq h]\\\nonumber
& \le & \mathbb{E}_{\mathbf{E}}[ V( \mathbf{E}_{\hat{\tau}_n^+})] + 
C M' (1-e^{-hNK})^{n+1}\ .
\end{eqnarray}
Adding up from $n = 1$ to $\infty$, this gives
$$
  P^{h}V( \mathbf{E}) \leq \mathbb{E}_{\mathbf{E}}[V( \mathbf{E}_{\hat{\tau}_{1}^+})]  + \frac{C M' }{e^{-hNK}} \,.
$$

Let $h>0$ be small enough so that 
$$
 \mathbb{P}_{\mathbf{E}}[ \tau_{1} \leq h  ] = 1 - e^{-h
   \mathcal{R}} > \frac{h}{2} \mathcal{R}\ . 
  $$
This is the only condition we impose on $h$. There exists such an $h$
independently of $\mathbf{E}$ because of the bound $\mathcal{R} \leq NK$. 

Notice that the energy exchange at $\tau_{1}$ is independent of $\tau_{1}$. We have, by Lemma \ref{stoppingtime}, 
\begin{eqnarray*} 
\mathbb{E}_{\mathbf{E}}[V( \mathbf{E}_{\hat{\tau}_{1}^+}) ] & = &
\mathbb{E}_{\mathbf{E}}[V( \mathbf{E}_{\tau_{1}^+}) \,|\, \tau_1 \leq h] \cdot \mathbb P_{\mathbf{E}}[\tau_1 \leq h] + 
V(\mathbf{E}) \cdot \mathbb P_{\mathbf{E}}[\tau_1 > h]\\
& \le & \left(V(\mathbf{E}) - \frac{C_{1}}{\mathcal{R}} V(\mathbf{E})^\alpha \right) \cdot \mathbb P_{\mathbf{E}}[\tau_1 \leq h]
 + V(\mathbf{E}) \cdot \mathbb P_{\mathbf{E}}[\tau_1 > h]\\
& \le & V(\mathbf{E}) - C_{1} \frac{h}{2} V(\mathbf{E})^\alpha\ .
\end{eqnarray*}
This gives
$$
P^{h}V( \mathbf{E}) \leq V(\mathbf{E}) - C_{1}\frac{h}{2} V(\mathbf{E})^\alpha
+ C M' e^{h N K}
$$
for any $\mathbf{E} \in B_{0}$.

To complete the proof of Theorem \ref{timestep}, it suffices to replace $M'$ by 
a large enough number $M_{0} > 1$ so that for $\mathbf E \in \{V( \mathbf{E})>M_{0}\}$, the constant 
$C M' e^{hNK}$ is  
absorbed into $c_{0} V(\mathbf E)^\alpha$ for $c_{0} :=C_{1 } \frac{h}{4}$. 

\qed

The same calculation in the proof of Theorem \ref{timestep} also yields
\begin{lem}
\label{cor2}
$$
  \sup_{\{ \mathbf{E}\,:\, V(\mathbf{E}) \leq \hat{M} \}} P^{h}V(
  \mathbf{E}) \leq \hat{M} +  CM' e^{hNK} < \infty \,.
$$
for any $\hat{M} > 0$. 
\end{lem}
\begin{proof}
The calculation in \eqref{eq4} gives 
$$
  \mathbb{E}_{\mathbf{E}}[ V( \mathbf{E}_{\hat{\tau}_{n+1}^{+}})] \leq \mathbb{E}_{\mathbf{E}}[V(
  \mathbf{E}_{\hat{\tau}_{n}^{+}}) ] + C M'(1 - e^{-hNK})^{n+1} \,.
$$
Adding up from $n = 0$ to $\infty$, this gives
$$
  P^{h}V( \mathbf{E}) \leq V( \mathbf{E}) + CM'e^{hNK} \leq
  \hat{M} +  CM'e^{hNK}  \,.
$$
\end{proof}

\section{Excursion time for the induced chain}\label{high}

For sufficiently small given parameters $h > 0$ and $\eta > 0$, let $M_{0}$ be the constant defined in Theorem \ref{timestep} and $B:=
\{ \mathbf{E} \,|\, V(\mathbf{E}) > M_{0} \}$. Define $G =
\mathbb{R}^{N}_{+}\setminus B$. The previous section together with
Theorem 3.11 gives bounds on
the excursion time in $B$. As introduced in Section 3.2, now one
should work on the $G$-induced chain. $G$ is not a compact set as the
energy at each site can be arbitrarily high. A common way to show
the tightness of a Markov process on non-compact state space is to
construct a Lyapunov function, as we do in this section for the
$G$-induced chain.

Let $h > 0$ be the given size of a time step (defined in Theorem
\ref{timestep}). We consider the time-$h$
sampling chain $\{ \mathbf{E}_{nh}\}_{n =
  0}^{\infty}$ of $\mathbf{E}_{t}$. For the sake of simplicity, we use
the notation $\mathbf{E}_{n}$ to represent $\mathbf{E}_{nh}$ when it
does not lead to a confusion.

Let $0 = T_{0} <T_{1} < T_{2} < \cdots$ be discrete stopping times such that
$$
  T_{1} = \inf\{k > 0 \,|\, \mathbf{E}_{k} \in G \}
$$
and
$$
  T_{n+1} = \inf\{ k > T_{n} \,|\, \mathbf{E}_{k} \in G \}
$$
for $n = 1, 2, \cdots$. We define
$$
  \hat{\mathbf{E}}_{n} = \mathbf{E}_{T_{n}}
$$
for $n = 1, 2, \cdots$ as the $G$-induced chain. It is easy to see
that $\hat{\mathbf{E}}_{n}$ is also a Markov chain. We denote the
transition kernel of $\hat{\mathbf{E}}_{n}$ by $\hat{P}$.

As in \cite{li2016polynomial}, a natural Lyapunov function is the total energy in
the system. Let
$$
  W( \mathbf{E}) = \sum_{ n = 1}^{N} E_{n} \,.
$$
The main theorem of this section reads:

\begin{thm}
\label{lyapunovinduced}
There exist constants $M_{1} > 1$ and $\delta > 0$, such that 
$$
  \hat{P} W( \mathbf{E}) \leq (1 - \delta)W( \mathbf{E}) 
$$
for every $\mathbf{E} \in G$ with $W(\mathbf{E}) > M_{1}$. 
\end{thm}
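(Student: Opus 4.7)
The total energy $W$ is conserved under all internal exchange events, so only the boundary terms contribute to the generator:
\[
\mathcal{L}W(\mathbf E) = \tfrac{1}{2}R(T_L, E_1)(T_L - E_1) + \tfrac{1}{2}R(E_N, T_R)(T_R - E_N).
\]
My first step is to use $K \gg T_L, T_R$ and the explicit form of $R$ to derive the pointwise inequality
\[
\mathcal{L}W(\mathbf E) \leq C_0 - c_0\bigl((E_1 - T_L)_+ + (E_N - T_R)_+\bigr)
\]
for constants $C_0, c_0$ depending only on $T_L, T_R$. On its own this does not yet give a multiplicative drift on $W$, because the bulk of $W$ might be stored at interior sites while $E_1, E_N$ remain small.

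The heart of the argument is to show that the constraint $\mathbf E \in G$ forces the process to transport mass to the boundary fast enough. The hypothesis $V_{1,i}(\mathbf E) \leq M_0$ implies $E_i \geq \epsilon_0 := M_0^{-1/(1 - a_1\eta)}$ for every $i$, so every internal and boundary exchange rate is bounded below by $\sqrt{\epsilon_0}$. I would then construct, for any $\mathbf E \in G$ with $W(\mathbf E) = w$ large, a uniform-positive-probability \emph{drain cascade} occurring within a fixed time $T^* = K_0 h$: a sequence of at most $N$ adjacent internal exchanges, each with rate $\geq \sqrt{\epsilon_0}$ and each with the uniform halving fraction $p$ restricted to a fixed subinterval, that funnels a constant fraction of $\max_i E_i$ into site $1$; this is followed by a boundary exchange with $p$ near $0$ which sends most of $E_1$ out into the left bath. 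The uniform lower bounds on the rates and on the density of $p$ produce a probability $p^* > 0$ independent of $\mathbf E$ on which $W$ decreases by at least $c\,w$ for a constant $c > 0$. Combining this with the pointwise bound $\mathcal{L}W \leq C_0$ to control $W$ off the cascade event yields
\[
P^{T^*} W(\mathbf E) \leq (1 - \delta')W(\mathbf E) + C'
\]
for every $\mathbf E \in G$ with $W(\mathbf E) > M_1'$.

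The final step lifts this fixed-time estimate to the induced chain. Writing $\hat{P}W(\mathbf E) = \mathbb{E}_{\mathbf E}[W(\mathbf E_{T_1 h})]$ and conditioning on whether $T_1 h \leq T^*$: on the good event the previous estimate together with the strong Markov property gives the required drift, while on the complementary event I would invoke Theorem~\ref{timestep} together with Theorem~\ref{polyfirstpassage} (choosing $\eta$ small enough that $\hat\beta = (1-\alpha)^{-1} > 1$) to conclude $\mathbb{E}_{\mathbf E}[T_1] < \infty$ uniformly in the initial layer near $\partial G$, and use $\mathcal{L}W \leq C_0$ to bound the expected growth of $W$ during the excursion by $C_0 h\, \mathbb{E}_{\mathbf E}[T_1]$, a constant. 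Absorbing this additive error into the multiplicative drift by enlarging $M_1$ produces $\hat{P}W(\mathbf E) \leq (1-\delta)W(\mathbf E)$. The main obstacle is the cascade construction in the middle step: one must show carefully that the inductive choice of restricted $p$-halvings has probability bounded below independently of the initial energy profile in $G$, so that a constant fraction of $W$ is delivered to site $1$ in $O(1)$ time regardless of where the mass initially sits.
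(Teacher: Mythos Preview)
Your ingredients are correct and match the paper's: the cascade construction (funnelling energy to site $1$ and dumping a fixed fraction to the bath) is exactly Lemma~\ref{outflow}, the uniform bound $\mathbb{E}_{\mathbf E}[T_1] \le C$ for $\mathbf E \in G$ is Proposition~\ref{lem61}, and the pointwise bound $\mathcal{L}W \le C_0$ underlies Lemmas~\ref{inflow1}--\ref{inflow3}. The gap is in how you assemble these pieces in the final step.

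Your conditioning on $\{T_1 h \le T^*\}$ versus $\{T_1 h > T^*\}$ does not do what you want. The fixed-time estimate $P^{T^*}W(\mathbf E) \le (1-\delta')W(\mathbf E) + C'$ controls $\mathbb{E}_{\mathbf E}[W(\mathbf E_{T^*})]$, not $\mathbb{E}_{\mathbf E}[W(\mathbf E_{T_1 h})\,\mathbf 1_{\{T_1 h \le T^*\}}]$; on the event $\{T_1 h \le T^*\}$ you stop \emph{before} $T^*$, and the cascade --- the only source of the multiplicative drop --- may or may not have completed by then. On the complementary event $\{T_1 h > T^*\}$ you only produce the additive bound $C_0 h\,\mathbb{E}_{\mathbf E}[T_1]$, with no multiplicative term at all. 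In fact the cascade event can land in either half of your dichotomy (after the dump, $E_1$ may or may not remain above the $G$-threshold), so the split is not aligned with where the drift actually comes from.

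The paper avoids this by working directly with the flux decomposition
\[
\hat P W(\mathbf E) - W(\mathbf E) \;=\; \mathbb{E}_{\mathbf E}\bigl[F_I([0, T_1 h))\bigr] - \mathbb{E}_{\mathbf E}\bigl[F_O([0, T_1 h))\bigr].
\]
Since $T_1 \ge 1$ always, one runs the entire cascade inside the \emph{first} sample interval $[0,h)$ --- all $N{+}1$ rings can occur there with uniformly positive probability because rates are bounded above by $K$ and below by $\sqrt{\epsilon_0}$ --- yielding $\mathbb{E}_{\mathbf E}[F_O([0,T_1 h))] \ge \mathbb{E}_{\mathbf E}[F_O([0,h))] \ge \sigma W(\mathbf E)$. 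For the in-flow one uses your bound $\mathcal{L}W \le C_0$ to make $F_I([0,nh)) - C_0 nh$ a supermartingale, applies optional stopping at $T_1$, and invokes the uniform bound on $\mathbb{E}_{\mathbf E}[T_1]$ to get $\mathbb{E}_{\mathbf E}[F_I([0,T_1 h))] \le C_3$. Subtracting gives $\hat P W \le (1-\sigma)W + C_3$ directly. So the detour through a fixed-time estimate followed by a lift is unnecessary: take $T^*=h$ and feed the cascade straight into the out-flow term.
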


To prove Theorem \ref{lyapunovinduced}, the first task is to bound the length of the time step for
$\hat{\mathbf{E}}_{n}$. Such estimate follows from Theorem
\ref{timestep} and Theorem \ref{polyfirstpassage} immediately.

\begin{pro}
\label{lem61}
There exists a constant $C_{5}$ such that
$$
   \mathbb{E}[ (T_{n+1} - T_{n})^{\hat{\alpha}} \,| \, \mathbf{\hat{E}}_{n}] \leq
   C_{5} \hat{V}(\mathbf{\hat{E}}_{n}) 
$$
for any $\mathbf{\hat{E}}_{n}$ and any $n \geq 0$, where $\hat{\alpha} = (
1- \alpha )^{-1} = 2 - 2 \eta$ for the constant $\alpha$ given in Theorem
\ref{timestep}, and $\hat{V}( \mathbf{E}) = \max \{ V( \mathbf{E}), 1 \}$. In particular, if $n
\geq  1$, then 
$$
  \mathbb{E}[ (T_{n+1} - T_{n})^{\hat{\alpha}} \,| \, \mathbf{\hat{E}}_{n}] \leq
  C_{5} M_{0} \,,
$$
where $M_{0}$ is as in Theorem \ref{timestep}. 
\end{pro}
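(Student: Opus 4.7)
The plan is to apply Theorem~\ref{polyfirstpassage} to the time-$h$ sampling chain $\mathbf{E}_n = \mathbf{E}_{nh}$ with Lyapunov function $\hat V = \max\{V,1\}$, drift exponent $\beta = \alpha$, and ``bottom'' set $A = G$. This gives a tail bound on the first return time to $G$ for the sampling chain, which by the strong Markov property at $T_n$ coincides in distribution with the induced step $T_{n+1} - T_n$ conditioned on $\hat{\mathbf{E}}_n$.

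First I would verify the global drift inequality
$$
P^h \hat V(\mathbf E) - \hat V(\mathbf E) \leq -c\, \hat V(\mathbf E)^\alpha + b\, \mathbf 1_G(\mathbf E)
$$
on all of $\mathbb{R}^N_+$. On $B = \{V > M_0\}$ we have $\hat V = V$ (assuming $M_0 > 1$), and since $\hat V \leq V + 1$ pointwise, Theorem~\ref{timestep} yields $P^h \hat V - \hat V \leq P^h V - V + 1 \leq -c_0 V^\alpha + 1$. Choosing $M_0$ large enough that $c_0 M_0^\alpha \geq 2$, the $+1$ is absorbed into $-(c_0/2)\hat V^\alpha$ on $B$. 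On $G$, Corollary~\ref{cor2} bounds $P^h V$, and hence $P^h \hat V$, by a finite constant depending only on $M_0, h, N, K$; together with $\hat V^\alpha \leq M_0^\alpha$ on $G$, this is absorbed into a single constant $b = b(M_0, h, N, K)$.

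Next, applying Theorem~\ref{polyfirstpassage} with this drift inequality gives
$$
\mathbb{E}_{\mathbf E}\!\left[\sum_{k=0}^{\tau_G - 1} (k+1)^{\hat\alpha - 1}\right] \leq \hat c\, \hat V(\mathbf E), \qquad \hat\alpha = (1-\alpha)^{-1} = 2(1-\eta).
$$
Since $\hat\alpha > 1$ for small $\eta$, the map $x \mapsto x^{\hat\alpha - 1}$ is increasing, so $\sum_{j=1}^{n} j^{\hat\alpha - 1} \geq \int_0^{n} x^{\hat\alpha - 1}\, dx = n^{\hat\alpha}/\hat\alpha$. This converts the weighted-sum bound into the moment bound $\mathbb{E}_{\mathbf E}[\tau_G^{\hat\alpha}] \leq \hat\alpha\, \hat c\, \hat V(\mathbf E) =: C_5\, \hat V(\mathbf E)$. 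By the strong Markov property of the time-$h$ sampling chain at the stopping time $T_n$, the conditional distribution of $T_{n+1} - T_n$ given $\hat{\mathbf E}_n$ is exactly that of $\tau_G$ started from $\hat{\mathbf E}_n$, yielding the first assertion. For $n \geq 1$ we have $\hat{\mathbf E}_n \in G$, hence $V(\hat{\mathbf E}_n) \leq M_0$ and $\hat V(\hat{\mathbf E}_n) \leq M_0$, which gives the uniform bound $C_5 M_0$.

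The only mildly technical point is the bookkeeping in the first step: Theorem~\ref{timestep} provides a drift inequality only on $B$ and only for $V$, while Theorem~\ref{polyfirstpassage} needs a global inequality for a function bounded below by $1$. The $+1$ slack from $\hat V - V \leq 1$ on $B$ must be re-absorbed into $\hat V^\alpha$ (using that $M_0$ can be taken as large as needed), and the lack of any drift estimate on $G$ is compensated by Corollary~\ref{cor2}. Beyond this, everything reduces to invoking the cited theorems.
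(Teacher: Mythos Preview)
Your proposal is correct and follows essentially the same route as the paper: establish a global drift inequality for $\hat V=\max\{V,1\}$ by combining Theorem~\ref{timestep} on $B$ with Corollary~\ref{cor2} on $G$, then invoke Theorem~\ref{polyfirstpassage} and convert the weighted-sum bound into a moment bound via the elementary integral comparison. Your bookkeeping for the $+1$ slack coming from $\hat V\le V+1$ on $B$ is in fact a bit more explicit than the paper's, which simply writes the drift inequality for $\hat V$ without spelling out how that extra constant is absorbed.
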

\begin{proof}
By the definition of $\mathbf{\hat{E}}_{n}$, $\mathbf{\hat{E}}_{n} =
\mathbf{E}_{T_{n}}$ is the energy configuration at the stopping time
$T_{n}$. Notice that
$$
  \mathbb{E}[ (T_{n+1} - T_{n})^{\hat{\alpha}} \, |\, \mathbf{\hat{E}}_{n}] \leq
  2 \cdot \mathbb{E}_{\mathbf{E}_{T_{n}}} \left [ \sum_{k =
      0}^{\tau_{G} - 1} (k+1)^{\hat{\alpha} - 1}\right] 
$$
for the constant $\alpha$ we use. The proposition follows immediately
by applying Theorem \ref{polyfirstpassage} to $\mathbf{E}_{n}$. Since
$M_{0} > 1$, let
$$
  \hat{V}( \mathbf{E}) = \max \{ 1, V( \mathbf{E}) \} \,.
$$
Then since $\hat{V} \leq V + 1$, it
follows from Theorem \ref{timestep} and Lemma \ref{cor2} that
$$
  \hat{P}\hat{V}( \mathbf{E}) - \hat{V}(\mathbf{E}) \leq -c_{0} \hat{V}^{\alpha}(
  \mathbf{E}) + ( 1 + M_{0} + C M' e^{hNK}) \mathbf{1}_{G} \,.
$$

The proposition then follows from Theorem \ref{polyfirstpassage}.
\end{proof}

The following definitions
regarding the energy flux in the system are necessary for the proof of
Theorem \ref{lyapunovinduced}. Let $t_{1},
t_{2}, \cdots$ be the times at which either clock $1$ or clock $N+1$
rings. The energy in-flow and out-flow on $[0, T)$ are denoted by 
$$
  F_{I}([0, T)) = \sum_{0 \leq t_{i} < T} (W(\mathbf{E}_{t_{i}^{+}}) - W(\mathbf{E}_{t_{i}}) )^{+}
$$
and 
$$
  F_{O}([0, T)) = \sum_{0 \leq t_{i} < T} ( W(\mathbf{E}_{t_{i}}) -
  W(\mathbf{E}_{t_{i}^{+}}) )^{+} \,,
$$
respectively. Next we need to estimate the energy flux with respect to
$\hat{\mathbf{E}}$. 

\begin{lem}
\label{inflow1}
There exists a constant $C_{2}$ such that
$$
  \mathbb{E}_{\mathbf{E}}[ F_{I}( [0, T)) ] \leq C_{2} T
$$
for any $\mathbf{E} \in \mathbb{R}^{N}_{+}$ and any $T > 0$. In addition
$$
  X_{n} := F_{I}( [0, nh)) - C_{2}hn
$$
is a supermartingale relative to $\mathcal{F}_{n}$, where
$\mathcal{F}_{n}$ is the $\sigma$ field generated by
$\{\mathbf{E}_{0}, \cdots,\mathbf{E}_{n}\}$. 
\end{lem}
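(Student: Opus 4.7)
\textbf{Proof plan for Lemma \ref{inflow1}.} The key observation is that internal energy exchanges $\mathcal{C}_2, \ldots, \mathcal{C}_N$ conserve the total energy $W$, so only the two bath clocks $\mathcal{C}_1$ and $\mathcal{C}_{N+1}$ can contribute to $F_I$. At a clock-$1$ ring, the change in $W$ is $p(E_1 + X_L) - E_1 = pX_L - (1-p)E_1$, whose positive part is bounded by $pX_L \le X_L$, independently of everything already in the system; similarly on the right. Because the rate function is capped, the rings of $\mathcal{C}_1$ and $\mathcal{C}_{N+1}$ occur at rates $R(T_L,E_1),\,R(E_N,T_R)\le K$.

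First I would bound $\mathbb{E}_{\mathbf{E}}[F_I([0,T))]$ via the compensator of the jump measure. The instantaneous expected in-flow rate at time $s$ is
\[
\lambda(s) \;=\; R(T_L,E_1(s))\,\mathbb{E}\bigl[(pX_L-(1-p)E_1(s))^+\bigr] + R(E_N,T_R)\,\mathbb{E}\bigl[(pX_R-(1-p)E_N(s))^+\bigr],
\]
and each of these two terms is bounded by $K\cdot\mathbb{E}[pX_L]=KT_L/2$ (resp.\ $KT_R/2$), since $p$ is uniform on $(0,1)$ independent of $X_L$. Setting $C_2 := K(T_L+T_R)/2$, the process $M_t := F_I([0,t)) - \int_0^t \lambda(s)\,ds$ is a martingale (the compensated jump measure integrated against the nonnegative functional), and $\int_0^t\lambda(s)\,ds\le C_2 t$, giving $\mathbb{E}_{\mathbf{E}}[F_I([0,T))] \le C_2 T$ uniformly in the initial condition $\mathbf{E}$.

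For the super-martingale claim, I invoke the Markov property at the discrete times $nh$:
\[
\mathbb{E}[X_{n+1}-X_n\mid\mathcal{F}_n] \;=\; \mathbb{E}_{\mathbf{E}_{nh}}\bigl[F_I([0,h))\bigr] - C_2 h \;\le\; 0,
\]
where the first equality uses the time-homogeneous strong Markov property together with the fact that $F_I([nh,(n+1)h))$ depends only on the jumps after time $nh$, and the inequality is the uniform bound from the previous step applied with $T=h$ and initial condition $\mathbf{E}_{nh}$.

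I expect the only subtle point to be the careful justification that $M_t$ is genuinely a martingale (as opposed to only a local martingale) — this is where capping $R$ by $K$ and the explicit bound $\mathbb{E}[pX_L]<\infty$ matter, since they make the compensator integrable on any finite interval and control the $L^1$ norm of $F_I([0,t))$. Everything else is bookkeeping about which clock rings contribute to the positive variation of $W$.
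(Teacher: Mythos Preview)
Your argument is correct and follows essentially the same route as the paper: bound the positive part of a bath jump by $pX_L$ (resp.\ $pX_R$), take expectation to get $T_L/2$ (resp.\ $T_R/2$), multiply by a uniform bound on the bath clock rate, and then use the Markov property for the discrete super-martingale. The only substantive difference is the rate bound: you use the cap $R\le K$, obtaining $C_2 = K(T_L+T_R)/2$, whereas the paper uses the sharper $R(T_L,E_1)\le\sqrt{T_L}$ (valid since $K\gg T_L,T_R$) to get $C_2 = \tfrac12(T_L^{3/2}+T_R^{3/2})$; either constant works for the lemma. Your compensator/martingale formulation is in fact more careful than the paper's somewhat informal differential-rate notation, and your remark about $M_t$ being a true (not merely local) martingale correctly identifies the one point that needs the boundedness of $R$ and of $\mathbb{E}[pX_L]$.
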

\begin{proof}
It is easy to see that for any $\mathbf{E}(t) \in \mathbb{R}^{N}_{+}$ we have
\begin{eqnarray*}
&&\mathbb{E}_{\mathbf{E}(t)}[F_{I}([t, t+ \mathrm{d}t ))]\\
 & \leq  & \sqrt{T_{L}} \left \{\int_{0}^{\infty}
\int_{0}^{1} \frac{1}{T_{L}} p(x + E_{1}(t)) e^{-x/T_{L}} \mathrm{d}x
\mathrm{d}p -  E_{1} \right \}^{+} \mathrm{d}t\\
&&+ \sqrt{T_{R}} \left \{\int_{0}^{\infty}
\int_{0}^{1} \frac{1}{T_{R}} p(x + E_{N}(t)) e^{-x/T_{R}} \mathrm{d}x
\mathrm{d}p -  E_{N} \right \}^{+} \mathrm{d}t\\
&\leq& \sqrt{T_{L}} \left \{\int_{0}^{\infty}
\int_{0}^{1} \frac{1}{T_{L}} p x e^{-x/T_{L}} \mathrm{d}x
\mathrm{d}p  \right \} \mathrm{d}t + \sqrt{T_{R}} \left \{\int_{0}^{\infty}
\int_{0}^{1} \frac{1}{T_{R}} p x  e^{-x/T_{R}} \mathrm{d}x
\mathrm{d}p  \right \} \mathrm{d}t\\
&=& \frac{1}{2}(T_{L}^{3/2} + T_{R}^{3/2}) \mathrm{d}t := C_{2} \mathrm{d}t\,.
\end{eqnarray*}

This estimate is independent of $\mathbf{E}(t)$. Therefore,
$$
  \mathbb{E}_{\mathbf{E}}[ F_{I}( [0, T))] \leq C_{2} T
$$
for any $\mathbf{E}$ and $T$. In addition, 
$$
  \mathbb{E}[X_{n+1} - X_{n} \,|\, \mathcal{F}_{n}] = \mathbb{E}[
  F_{I}( [nh, (n+1)h)) \,|\, \mathcal{F}_{n} ] - C_{2} h \leq 0
$$
for any $\mathbf{E}_{n}$. This completes the proof.
\end{proof}

By the definition of $T_{1}$, $T_{1}$ is a stopping time relative to
$\mathcal{F}_{n}$. We have
\begin{pro}
\label{inflow2}
$$
  \mathbb{E}_{\mathbf{E}}[ F_{I}( [0, T_{1}))] \leq C_{2} h
  \mathbb{E}_{\mathbf{E}}[T_{1}] \,,
$$
where $C_{2}$ is the constant in Lemma \ref{inflow1}.
\end{pro}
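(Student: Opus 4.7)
The plan is to apply the optional stopping theorem to the super-martingale $X_n = F_I([0, nh)) - C_2 h n$ established in Lemma~\ref{inflow1}, with stopping time $T_1$. The statement then amounts to rewriting $\mathbb{E}_{\mathbf{E}}[X_{T_1}] \le \mathbb{E}_{\mathbf{E}}[X_0] = 0$.

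First, I would note that $T_1$ is indeed a stopping time for $\{\mathcal{F}_n\}$: the event $\{T_1 \le n\} = \bigcup_{k=1}^{n}\{\mathbf{E}_k \in G\}$ is $\mathcal{F}_n$-measurable because $\mathbf{E}_k = \mathbf{E}_{kh}$ is $\mathcal{F}_n$-measurable for $k \le n$. Because $T_1$ is not a.s.\ bounded, I would not invoke optional stopping directly; instead I would apply it to the bounded stopping time $T_1 \wedge N$ for each integer $N \ge 1$, which yields
$$
\mathbb{E}_{\mathbf{E}}[X_{T_1 \wedge N}] \le \mathbb{E}_{\mathbf{E}}[X_0] = 0,
$$
i.e.
$$
\mathbb{E}_{\mathbf{E}}\bigl[F_I([0, h(T_1 \wedge N)))\bigr] \le C_2 h\,\mathbb{E}_{\mathbf{E}}[T_1 \wedge N].
$$

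Next, I would pass to the limit $N \to \infty$. Since $F_I$ is nonnegative and nondecreasing in its upper endpoint, and $T_1 \wedge N \uparrow T_1$, the monotone convergence theorem applied to both sides gives
$$
\mathbb{E}_{\mathbf{E}}\bigl[F_I([0, h T_1))\bigr] \le C_2 h\,\mathbb{E}_{\mathbf{E}}[T_1],
$$
which is the claimed estimate (under the natural identification of $T_1$ on the left-hand side with the real time $hT_1$).

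To ensure the right-hand side is finite and the bound has content, I would invoke Proposition~\ref{lem61} with $n = 0$, noting that $\hat{\mathbf{E}}_0 = \mathbf{E}$ and $T_1 - T_0 = T_1$. For any $\mathbf{E} \in \mathbb{R}^{N}_{+}$ we have $\hat V(\mathbf{E}) < \infty$, and $\hat\alpha = 2 - 2\eta > 1$, so Jensen's inequality yields
$$
\mathbb{E}_{\mathbf{E}}[T_1] \le \bigl(\mathbb{E}_{\mathbf{E}}[T_1^{\hat\alpha}]\bigr)^{1/\hat\alpha} \le \bigl(C_5 \hat V(\mathbf{E})\bigr)^{1/\hat\alpha} < \infty.
$$
There is no serious obstacle here: the argument is a routine application of the optional stopping theorem to the super-martingale supplied by Lemma~\ref{inflow1}, combined with the first-moment bound on $T_1$ from Proposition~\ref{lem61}. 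The only mild care needed is the truncation at $T_1 \wedge N$ before passing to the limit, which is entirely standard.
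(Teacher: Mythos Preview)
Your proof is correct and follows essentially the same approach as the paper: both apply the optional stopping theorem to the super-martingale $X_n$ from Lemma~\ref{inflow1} and use Proposition~\ref{lem61} to ensure $\mathbb{E}_{\mathbf{E}}[T_1]<\infty$. The only cosmetic difference is that the paper invokes optional stopping directly via the bounded-increments condition $\mathbb{E}_{\mathbf{E}}[|X_{n+1}-X_n|\mid\mathcal{F}_n]\le 2C_2h$, whereas you truncate at $T_1\wedge N$ and pass to the limit by monotone convergence; both justifications are standard and equivalent here.
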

\begin{proof}
By Proposition \ref{lem61}, $\mathbb{E}_{\mathbf{E}}[T_{1}] < \infty$ for any
$\mathbf{E} \in \mathbb{R}^{N}_{+}$. In addition, by Lemma \ref{inflow1},
$$
  \mathbb{E}_{\mathbf{E}}[|X_{n+1} - X_{n}| \,|\, \mathcal{F}_{n}] \leq 2 C_{2} h <
  \infty \,,
$$
where the super-martingale $X_{n}$ is defined in Lemma \ref{inflow1}. It then follows from the optional stopping theorem that
$$
  \mathbb{E}_{\mathbf{E}}[ X_{T_{1}} ] \leq \mathbb{E}_{\mathbf{E}}[ X_{0}] = 0 \,.
$$
Therefore 
$$
  \mathbb{E}_{\mathbf{E}}[ F_{I}( [0, T_{1}))] \leq C_{2} h
  \mathbb{E}_{\mathbf{E}}[T_{1}] \,.
$$
\end{proof}

The following estimate about the energy influx with respect to the
$G$-induced chain follows easily.

\begin{lem}
\label{inflow3}
There exist constants $C_{3} < \infty$ and $C_{3}' < \infty$ such that
$$
\mathbb{E}_{\mathbf{E}}[  F_{I}( [0 , T_{1})) ] \leq
C_{3} 
$$ 
if $\mathbf{E} \in G$ and 
$$
  \mathbb{E}_{\mathbf{E}}[  F_{I}( [0 , T_{1})) ] \leq
 C'_{3} \hat{V}(\mathbf{E}) \,.
$$
if $\mathbf{E} \notin G$.
\end{lem}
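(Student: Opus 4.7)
The plan is to derive Lemma~\ref{inflow3} as a direct consequence of the two preceding results: Proposition~\ref{inflow2}, which controls $\mathbb{E}_{\mathbf{E}}[F_I([0, T_1))]$ in terms of $\mathbb{E}_{\mathbf{E}}[T_1]$, and Proposition~\ref{lem61}, which controls the $\hat{\alpha}$-th moment of $T_1$ when applied at index $n=0$. No new estimate on the dynamics is needed; the remaining work is to convert an $\hat{\alpha}$-th moment bound into a first moment bound via Jensen's inequality and then to separate the two initial-condition cases.

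Concretely, Proposition~\ref{inflow2} yields
$$
\mathbb{E}_{\mathbf{E}}[F_I([0, T_1))] \leq C_2 h\, \mathbb{E}_{\mathbf{E}}[T_1],
$$
so it suffices to bound $\mathbb{E}_{\mathbf{E}}[T_1]$. Setting $n = 0$ in Proposition~\ref{lem61}, so that $\hat{\mathbf{E}}_0 = \mathbf{E}_0 = \mathbf{E}$ and $T_0 = 0$, I obtain $\mathbb{E}_{\mathbf{E}}[T_1^{\hat{\alpha}}] \leq C_5 \hat{V}(\mathbf{E})$ with $\hat{\alpha} = 2 - 2\eta > 1$. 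Jensen's inequality applied to the concave map $x \mapsto x^{1/\hat{\alpha}}$ then gives
$$
\mathbb{E}_{\mathbf{E}}[T_1] \leq \bigl(C_5 \hat{V}(\mathbf{E})\bigr)^{1/\hat{\alpha}}.
$$

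The last step is to split into the two announced cases. If $\mathbf{E} \in G$, then by definition $V(\mathbf{E}) \leq M_0$, and since $M_0 > 1$ we have $\hat{V}(\mathbf{E}) \leq M_0$; this produces the uniform bound with $C_3 := C_2 h (C_5 M_0)^{1/\hat{\alpha}}$. If $\mathbf{E} \notin G$, then $V(\mathbf{E}) > M_0 > 1$, so $\hat{V}(\mathbf{E}) = V(\mathbf{E}) \geq 1$, and since $1/\hat{\alpha} < 1$ we get $\hat{V}(\mathbf{E})^{1/\hat{\alpha}} \leq \hat{V}(\mathbf{E})$, yielding the linear bound with $C_3' := C_2 h C_5^{1/\hat{\alpha}}$. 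I do not anticipate any substantive obstacle here: once Propositions~\ref{inflow2} and~\ref{lem61} are in hand, the lemma follows by elementary manipulations, and the only item worth watching is ensuring $M_0 > 1$ so that $\hat{V} = \max\{V, 1\}$ coincides with the relevant branch in each case.
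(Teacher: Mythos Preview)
Your proposal is correct and follows essentially the same route as the paper: combine Proposition~\ref{inflow2} with Proposition~\ref{lem61} and then split into the two cases $\mathbf{E}\in G$ and $\mathbf{E}\notin G$. The only cosmetic difference is that the paper bounds $\mathbb{E}_{\mathbf{E}}[T_1]$ by $\mathbb{E}_{\mathbf{E}}[T_1^{\hat{\alpha}}]$ directly (using that $T_1\ge 1$ is integer-valued and $\hat{\alpha}>1$), whereas you use Jensen's inequality; both yield the same conclusion with slightly different constants.
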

\begin{proof}
By Proposition \ref{lem61},
$$
  \mathbb{E}_{\mathbf{E}}[ T_{1}] \leq
  \mathbb{E}_{\mathbf{E}}[ (T_{1})^{\hat{\alpha}}]
  \leq C_{5} \hat{V}(\mathbf{E}) \,. 
$$

It then follows from Proposition \ref{inflow2} that
$$
  \mathbb{E}_{\mathbf{E}}[  F_{I}( [0, T_{1})) ] \leq
  C_{2}h  \cdot C_{5} \hat{V}(
  \mathbf{E})    \,.
$$

Let
$$
  C_{3} :=  C_{2}h  \cdot C_{5} M_{0} \,.
$$
If $\mathbf{E}_{0} \in G$, the lemma follows immediately. If
$\mathbf{E}_{0} \notin G$, by letting 
$$
  C_{3}' = C_{2}hC_{5}  \,,
$$
we will have 
$$
  \mathbb{E}_{\mathbf{E}}[  F_{I}( [0, T_{1}))  ] \leq
  C_{3}' \hat{V}( \mathbf{E}) \,.
$$
This completes the proof.

\end{proof}

The following lemma controls the out flow of the energy.

\begin{lem}
\label{outflow}
Assume $\mathbf{E} \in G$. There exist constants $\sigma, M'' > 0$, such that
$$
  \mathbb{E}_{\mathbf{E}}[ F_{O}( [0, T_{1}))] \geq \sigma W(
  \mathbf{E}) 
$$ 
whenever $W(\mathbf{E}) > M''$. 
\end{lem}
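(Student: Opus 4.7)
The plan is to isolate, inside the initial time interval $[0,h]$, a favorable event of probability bounded below on which a positive fraction of $W(\mathbf{E})$ is routed to one of the boundary sites and drained through a heat-bath exchange. Since $T_{1}\geq 1$, one has $F_{O}([0,T_{1}h))\geq F_{O}([0,h))$, so it suffices to lower-bound $\mathbb{E}_{\mathbf{E}}[F_{O}([0,h))]$ by $\sigma W(\mathbf{E})$.

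The starting observation is that $\mathbf{E}\in G$ forces a uniform lower bound on every rate: $V_{1,i}(\mathbf{E})=E_{i}^{\eta-1}\leq V(\mathbf{E})\leq M_{0}$ gives $E_{i}\geq \varepsilon_{0}:=M_{0}^{-1/(1-\eta)}$ for each $i$, hence $R(E_{i},E_{i\pm 1})\geq r_{0}:=\min\{K,\sqrt{\varepsilon_{0}}\}$. Pick $i^{*}$ with $E_{i^{*}}\geq W(\mathbf{E})/N$ by pigeonhole; for concreteness take $i^{*}\neq 1$ (the case $i^{*}=1$ is direct, and $i^{*}$ near the right end is handled symmetrically through the right bath). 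Let $A$ denote the event that, within $[0,h]$, the first $i^{*}$ exchange clocks to ring are, in order, $\mathcal{C}_{i^{*}}, \mathcal{C}_{i^{*}-1},\ldots, \mathcal{C}_{2}, \mathcal{C}_{1}$, that each of the $i^{*}-1$ interior rings has its uniform parameter in $[1/2,1]$, and that the concluding boundary ring $\mathcal{C}_{1}$ has its uniform parameter in $[0,1/2]$. Along this trajectory the ``carrier'' site gains energy at each interior step while every site not yet touched retains its original value, so every planned next-clock rate stays in $[r_{0},K]$. Combining a Poisson lower bound on the number of rings in $[0,h]$ with the uniform lower bound $r_{0}/((N+1)K)$ on each successive next-clock probability and the factors $1/2$ for the two $p$-range conditions yields $\mathbb{P}_{\mathbf{E}}[A]\geq c_{*}$ for a positive constant $c_{*}=c_{*}(N,h,r_{0},K)$ independent of $\mathbf{E}$.

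An elementary induction using $p\geq 1/2$ at each of the $i^{*}-1$ interior rings shows that $E_{1}\geq E_{i^{*}}/2^{i^{*}-1}\geq W(\mathbf{E})/(N\, 2^{N-1})$ just before the concluding boundary ring on $A$. Setting $M'':=2N\, 2^{N}\max\{T_{L},T_{R}\}$ then gives $E_{1}\geq 4T_{L}$ at that moment, and a direct calculation using $E_{1}-E_{1}'=(1-p)E_{1}-pX_{L}$ together with $p\in[0,1/2]$ and $X_{L}\sim\operatorname{Exp}(T_{L})$ yields $\mathbb{E}[(E_{1}-E_{1}')^{+}\mid A]\geq E_{1}/4$. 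Hence the outflow on $A$ is at least $W(\mathbf{E})/(N\, 2^{N+1})$, and
\[
\mathbb{E}_{\mathbf{E}}[F_{O}([0,T_{1}h))] \;\geq\; \mathbb{P}_{\mathbf{E}}[A]\cdot \frac{W(\mathbf{E})}{N\, 2^{N+1}} \;=:\; \sigma\, W(\mathbf{E}).
\]
The main technical hurdle is the lower bound $\mathbb{P}_{\mathbf{E}}[A]\geq c_{*}$: individual clock rates depend on the current state, and some ``abandoned'' sites may fall below $\varepsilon_{0}$ after the exchange that uses them. This is resolved by noting that each subsequent planned exchange involves only the current carrier site (whose energy only grows) and a site not yet touched (which still has energy $\geq \varepsilon_{0}$), so the relevant rate for the next planned clock stays $\geq r_{0}$ throughout the construction.
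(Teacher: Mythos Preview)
Your proof is correct and follows essentially the same strategy as the paper's: route the maximal-energy site to a boundary via a chain of nearest-neighbor exchanges, then dump a fixed fraction through a bath exchange, all within the first time step $[0,h)$. The paper's favorable event $\mathcal{E}$ additionally requires that \emph{no other} clocks ring during $[0,h)$ (partitioning $[0,h)$ into $n_1$ subintervals with exactly the planned ring in each), whereas your event $A$ only asks that the first $i^*$ rings be the planned ones in order. Your version is slightly more permissive, and you correctly identify and resolve the only real subtlety---that the next planned clock always involves the growing carrier and a still-untouched site with energy $\geq\varepsilon_0$, so its rate stays $\geq r_0$ throughout---which is actually more detail than the paper supplies (it simply declares the lower bound on $\mathbb{P}[\mathcal{E}]$ ``a straightforward exercise''). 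The constants $\sigma$ and $M''$ differ cosmetically but the structure is identical.
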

\begin{proof}
Since 
$$
  \mathbb{E}_{\mathbf{E}}[ F_{O}( [0, T_{1}))] \geq \mathbb{E}_{\mathbf{E}}[ F_{O}(
  [0, h))] \,, 
$$
it is sufficient to construct an event within the time interval $ [0 ,
h )$ such that whenever $\mathbf{E} \in G$, a certain proportion of energy can be dumped out of
the system.

Let $E_{n_{1}}$ be the site that holds the largest amount of
energy. Let $\mathcal{E}$ denote the following event
\begin{itemize}
  \item Clocks $n_{1}$, $n_{1} - 1$, $\cdots $, $2$, and $1$ ring in
    the time interval $[0, \frac{h}{n_{1}})$, $[ \frac{h}{n_{1}},
    \frac{2h}{n_{1}})$, $\cdots$, $[\frac{(n_{1} - 1)h}{n_{1}}, h)$,
    respectively.
\item At the $i$-th ring, $E_{n_{1} - i + 1}$ gives at least half of
  its energy to $E_{n_{1} - i}$ for $i = 1, \cdots, n_{1} - 1$.
\item At the $n_{1}$-th ring, $E_{1}$ dumps $1/3$ of its energy to the
  left heat bath.
\item Besides what described above, all other clocks do not ring
  during the time period $[0, h )$.
\end{itemize}
If $W( \mathbf{E}) \geq 3 N \cdot 2^{N-1} T_{L}$, we have
$E_{n_{1}}(0) 
\geq 3 \cdot 2^{N-1} \cdot T_{L}$ and $E_{1}(\frac{(n_{1} - 1)h}{n_{1}} 
) > 3 T_{L}$ conditioning with event $\mathcal{E}$. Note that all
clock rates are bounded above by $K$. In addition, since $E_{n_{1}}$
holds the largest amount of energy, right before the $i$-th ring (for
$i < n_{1}$) we
have $E_{n_{1}-i+1} \geq 2^{-(i-1)} E_{n_{1} - i}$. Hence the probability that
$E_{n_{1} - i + 1 } $ gives at least half of its energy to $E_{n_{1} -
i}$ is at least $2^{-i}/(1 + 2^{-(i-1)})$. Therefore, it is a
straightforward exercise to check that there exists a constant $c_{0}
> 0$ such that 
$$
  \mathbb{P}[ \mathcal{E}] \geq c_{0} > 0
$$
for every $\mathbf{E} \in G$. The proof is completed by letting
$$
  \sigma = \frac{c_{0}}{3} \cdot 2^{-(N-1)}
$$
and
$$
  M'' = 3 N \cdot 2^{N-1} T_{L} \,.
$$

\end{proof}

\begin{proof}
{\bf Proof of Theorem \ref{lyapunovinduced}.}

By the definition of $W( \mathbf{E})$, we have
$$
  \hat{P} W( \mathbf{E}) - W(\mathbf{E}) = \mathbb{E}_{ \mathbf{E}} [F_{I}([0,
  T_{1}))] - \mathbb{E}_{\mathbf{E}}[ F_{O}( [0, T_{1})) ]\,.
$$
Since $\mathbf{E} \in G$, by Lemma \ref{inflow3}, we have $\mathbb{E}_{\mathbf{E}}[F_{I}([0,
T_{1}) ) ]\leq C_{3}$. On the other hand, by Lemma \ref{outflow}, if
$W(E) > M''$, we have 
$$
  \mathbb{E}_{\mathbf{E}}[ F_{O}( [0, T_{1})) ] \geq \sigma W(
  \mathbf{E}) \,.
$$
Therefore, let $\delta = \frac{1}{2} \sigma$ and $M_{1} =
\max \{ 2C_{3}/\sigma , M'' \}$, we have 
$$
  \hat{P} W( \mathbf{E}) \leq (1 - \delta) W( \mathbf{E}) \,.
$$
This completes the proof.

\end{proof}

\section{Proof of the theorems}\label{pf}

\subsection{Existence of a uniform reference set $\mathfrak{C}$}

Define
$$
  \mathfrak{C} = \{ \mathbf{E} \,|\, V( \mathbf{E}) \leq M_{0}, W( \mathbf{E})
  \leq M_{1} \} \,.
$$

The aim of this subsection is to prove that $\mathfrak{C}$ is a uniform reference
set. This follows immediate from the theorem below.

\begin{thm}
\label{urs}
For any $ t > 0$, there exists a constant $\eta > 0$ such that 
$$
  P^{t}(\mathbf{E}, \cdot) > \eta U_{\mathfrak{C}}(\cdot) \quad \mbox{for all } \mathbf{E}
  \in \mathfrak{C} \,,
$$
where $U_{\mathfrak{C}}$ is the uniform probability measure on $\mathfrak{C}$.
\end{thm}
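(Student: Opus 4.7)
The plan is to establish the minorization directly via a Doeblin-type argument using an engineered sequence of clock rings. First observe that $\mathfrak{C}$ is compact in the open orthant $(0, \infty)^N$: since $V_{1,i}(\mathbf{E}) = E_i^{a_1\eta - 1}$ with $a_1\eta - 1 < 0$ is a summand of $V$, the bound $V(\mathbf{E}) \leq M_0$ forces each $E_i \geq c_{\min}(M_0, \eta) > 0$, while $W(\mathbf{E}) \leq M_1$ caps the energies above, so $\mathfrak{C} \subset [c_{\min}, M_1]^N$. Consequently all clock rates involved satisfy $0 < r_{\min} \leq R(\cdot, \cdot) \leq K$ on $\mathfrak{C}$.

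Partition $[0, t]$ into $N+1$ equal subintervals $I_1, \ldots, I_{N+1}$ and define the control event $\mathcal{A}$: exactly one clock rings in each $I_j$, namely $\mathcal{C}_{N+1}$ (right bath) in $I_1$, then $\mathcal{C}_N, \mathcal{C}_{N-1}, \ldots, \mathcal{C}_2$ in $I_2, \ldots, I_N$, and $\mathcal{C}_1$ (left bath) in $I_{N+1}$, with no other clock rings in $[0, t]$. Denote by $s_R, s_L$ the two bath draws and by $p_1, \ldots, p_{N+1}$ the uniform fractions used at the rings. Since $R \leq K$ and the intermediate energies remain bounded away from $0$ once one further restricts to $p_j \in [1/4, 3/4]$, a routine inhomogeneous Poisson calculation yields $\mathbb{P}_{\mathbf{E}}(\mathcal{A}) \geq p_0 > 0$ uniformly for $\mathbf{E} \in \mathfrak{C}$.

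Setting $S_1 := E_{N-1} + p_1(E_N + s_R)$ and $S_k := E_{N-k} + p_k S_{k-1}$ for $k = 2, \ldots, N-1$, tracking the exchanges yields the explicit formulas
$$
  F_j = (1 - p_{N-j+2})\, S_{N-j+1}\ \text{for } j = 2, \ldots, N, \qquad F_1 = p_{N+1}\,(p_N S_{N-1} + s_L).
$$
With $p_1, p_{N+1}$ held as parameters, the Jacobian of the map $(p_2, \ldots, p_N, s_L) \mapsto (F_N, F_{N-1}, \ldots, F_2, F_1)$ is lower triangular with diagonal $(-S_1, -S_2, \ldots, -S_{N-1}, p_{N+1})$, so $|\det J| = p_{N+1} \prod_{k=1}^{N-1} S_k$. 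Integrating against the joint density of $(s_R, s_L, p_1, p_{N+1})$ — uniform on the $p$'s and exponential on the bath draws — and using that each $S_k$ is uniformly bounded on the relevant parameter range, the conditional Lebesgue density of $\mathbf{E}_t$ given $\mathcal{A}$ and $\mathbf{E}_0 = \mathbf{E}$ is bounded below by some $\eta_0 > 0$, uniformly over $(\mathbf{E}, \mathbf{F}) \in \mathfrak{C} \times \mathfrak{C}$. Combining, $P^t(\mathbf{E}, A) \geq p_0 \eta_0 |A| = (p_0 \eta_0 |\mathfrak{C}|)\, U_{\mathfrak{C}}(A)$ for every Borel $A \subseteq \mathfrak{C}$, proving the theorem.

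The main obstacle will be the reachability bookkeeping: for every pair $(\mathbf{E}, \mathbf{F}) \in \mathfrak{C} \times \mathfrak{C}$ one must verify that the inverse relations $p_k = 1 - F_{N-k+2}/S_{k-1}$ simultaneously fall in $(1/4, 3/4)$ on a set of $(s_R, p_1, p_{N+1}, s_L)$ of uniformly positive Lebesgue measure. The unboundedness of the bath draws $s_R, s_L$ provides exactly the freedom needed to tune $S_1$ (and the terminal correction from $s_L$) to any target, and a standard continuity-plus-compactness argument over $\mathfrak{C} \times \mathfrak{C}$ then supplies the required uniformity.
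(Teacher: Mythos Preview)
Your one–sweep strategy with $N+1$ rings and an explicit Jacobian is a legitimate alternative to the paper's proof, which instead uses $2N$ rings (a left-to-right pass that normalises all sites into a fixed energy window, a bath injection at site $N$, then a right-to-left pass that deposits the target values) together with a finite covering of $\mathfrak{C}$ by small balls. Your triangular Jacobian computation is correct, and the density route is in principle cleaner than the covering.

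However, the fixed window $p_j\in[1/4,3/4]$ is a genuine gap, not something compactness repairs. From $p_k=1-F_{N-k+2}/S_{k-1}$ one needs $S_{k-1}\in\bigl(\tfrac{4}{3}F_{N-k+2},\,4F_{N-k+2}\bigr)$ for every $k$, while $S_k-S_{k-1}=E_{N-k}-F_{N-k+2}$ is fixed by $(\mathbf{E},\mathbf{F})$, so all the $S_k$ are determined once $S_1$ is. Already for $N=2$ the resulting constraints on $S_1$ are incompatible whenever $F_2>12F_1$, which occurs as soon as $M_1/c_{\min}>12$; this ratio is typically large here. Since the obstruction is pointwise (for a specific $(\mathbf{E},\mathbf{F})$ no admissible parameters exist), the continuity-plus-compactness argument you invoke does not apply. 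A smaller point: in your final paragraph you list $s_L$ among the free parameters, but $s_L$ is one of the change-of-variable inputs and is therefore determined once $\mathbf{F}$ and $(s_R,p_1,p_{N+1})$ are fixed.

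The fix is easy and keeps your scheme intact. Only a \emph{lower} bound on $p_k$ is needed for the clock-rate control: after ring $k$ the site receiving $(1-p_k)S_{k-1}$ already equals the target $F_{N-k+2}\ge c_{\min}$ and plays no role in any later intended clock. So drop the upper cap on $p_k$, and instead restrict $s_R$ to a fixed bounded window large enough that $S_1\ge N M_1+1$; then every $S_{k-1}>F_{N-k+2}$, each $p_k\in(0,1)$ automatically, $p_kS_{k-1}=S_{k-1}-F_{N-k+2}\ge 1$ uniformly, and all $S_k$ stay bounded above so the Jacobian is controlled. Finally choose $p_{N+1}$ in a short interval below $c_{\min}/\sup_k S_k$ to force $s_L>0$ and bounded. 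With this adjustment your argument goes through and yields the minorisation.
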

\begin{proof}

Let $e = \inf\{ \min(E_{1},
\cdots, E_{N}) \,|\, \mathbf{E} = (E_{1} , \cdots, E_{N}) \in
\mathfrak{C} \}$. Clearly $e > 0$. 

Note that $\mathfrak{C}$ is compact due to the condition involving
$W$. Then we cover $\mathfrak{C}$ by finitely many disks $D = D( \bar{\mathbf{E}}, \xi)
= \{ \mathbf{E} \,|\, | \mathbf{E} - \bar{\mathbf{E}} | \leq \xi \}$
for $\xi < e/2$. It is sufficient to show that for any $t > 0$, there
exists $\eta > 0$ independent of $\bar{\mathbf{E}} \in \mathfrak{C}$, such that for any $\mathbf{E} \in \mathfrak{C}$, $P^{t}(
\mathbf{E}, \cdot) \geq \eta U_{D}(\cdot)$ for all $D$ in this cover. 

Let $\mathbf{E}$ and $D$ be fixed. We prescribe the following
sequence of events.
\begin{itemize}
  \item[(i)]  On the time intervals $[\frac{(i-1)t}{2N}, \frac{it}{2N} )$ for
each $i = 1, \cdots, N - 1$, site $i$ exchanges energy with site $i+1$. After
the energy exchange, the remaining energy at site $i$ is between $e/2$
and $e$. Other clocks do not ring during this time period. 
\item[(ii)] On the time interval $[\frac{(N - 1)t}{2N}, \frac{1}{2 })$,
  site $N$ exchanges energy with the right heat bath. After the energy
  exchange, $E_{N}$ is greater than $\sup_{\mathbf{\bar{E}} \in D} \sum_{i =
  1}^{N} (\bar{E}_{i} + \xi )$. Other clocks do not ring during this time
period. 
\item[(iii)] On the time intervals $[\frac{(N + i -1 )t}{2N}, \frac{(N +
    i)t}{2N} )$ for each $i = 1, \cdots, N$, site $N - i$ exchanges
  energy with site $N + 1 - i$. (Note that site $0$ is the left heat bath.) After each energy exchange, the energy
  left at site $N + 1 - i$ is uniformly distributed in $E_{N+1 - i}
  \in [ \bar{E}_{N+1 - i} - \xi, \bar{E}_{N+1-i} + \xi]$. Other clocks
  do not ring during this time period. 
\end{itemize}

It is then easy to check that the event above occurs with probability
at least $\eta$, where $\eta > 0$ is independent of $\mathbf{E}$
provided $\mathbf{E} \in \mathfrak{C}$.  
\end{proof}

Since $P^{t}(\mathbf{E}, \cdot) $ now has positive density everywhere
in $\mathbb{R}^{N}_{+}$, the strong aperiodicity and irreducibility of
$\mathbf{E}_{n}$ follows immediately.

\medskip

\begin{cor}
\label{aperiod}
$\mathbf{E}_{n}$ is a strongly aperiodic Markov chain.
\end{cor}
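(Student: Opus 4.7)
The plan is to deduce the corollary directly from Theorem \ref{urs} together with a mild extension that promotes the positive density at points of $\mathfrak{C}$ to positive density at arbitrary starting points. Recall from Definition 3.3 that strong aperiodicity of an irreducible chain only requires the existence of a uniform reference set $\mathfrak{C}$ with $\theta(\mathfrak{C}) > 0$. Since Theorem \ref{urs} already supplies such a set with $\theta = U_{\mathfrak{C}}$ (so $\theta(\mathfrak{C}) = 1$), the only remaining task is to verify irreducibility with respect to some nontrivial measure, which in our setting will be Lebesgue measure $\lambda$ restricted to a sufficiently large region.

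First, I would establish that $\mathfrak{C}$ is accessible from every $\mathbf{E} \in \mathbb{R}^N_+$. This uses a construction very close to the one already carried out in the proof of Theorem \ref{urs}: by successively ``cascading'' energy toward the boundary sites and then exchanging with the heat baths, any initial configuration can be driven into $\mathfrak{C}$ on a prescribed time interval $[0,t]$ with strictly positive probability. The only new ingredient compared with Theorem \ref{urs} is that here the starting energies need not be uniformly bounded; one handles this by prepending a finite number of boundary exchanges that, with positive probability, bring the total energy below $M_1$ while simultaneously ensuring all site energies fall in a safe range (so that $V \leq M_0$). The bounds $\mathcal{R} \leq NK$ and the explicit form of the heat-bath kernel make each step occur with positive probability uniformly.

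Combining this accessibility of $\mathfrak{C}$ with Theorem \ref{urs} yields $\lambda$-irreducibility: for any Borel set $A \subset \mathbb{R}^N_+$ with $\lambda(A \cap \mathfrak{C}) > 0$, and any $\mathbf{E}$, one first reaches $\mathfrak{C}$ in some $n_1$ steps, then applies Theorem \ref{urs} to conclude $P^{n_1 + 1}(\mathbf{E}, A) \geq \eta \, U_{\mathfrak{C}}(A \cap \mathfrak{C}) > 0$. A standard union argument extends this to every $A$ with $\lambda(A) > 0$, since from any state one can, again by a finite sequence of exchanges with positive probability, reach an arbitrary open subset of $\mathbb{R}^N_+$. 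Thus $\mathbf{E}_n$ is irreducible in the sense of Section 3. With irreducibility in hand, Theorem \ref{urs} furnishes the uniform reference set demanded by Definition 3.3, and strong aperiodicity follows immediately.

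The only delicate point, and the one I would spend the most care on, is verifying accessibility of $\mathfrak{C}$ starting from configurations in which $V(\mathbf{E})$ is very large (i.e., some $E_i$ is very small) or in which $W(\mathbf{E})$ is very large. For the former, the low clock rate $\sqrt{\min(E_i,E_{i+1})}$ slows down exchanges but does not make the relevant event probability vanish on a fixed time window of choice; for the latter, one uses the heat bath clocks (whose rates are bounded below by $\sqrt{\min(T_L, E_1)}$ and $\sqrt{\min(E_N, T_R)}$ once a single exchange has populated $E_1, E_N$ appropriately) to remove excess energy. None of this is quantitatively subtle, but it is what distinguishes Corollary \ref{aperiod} from being an immediate one-line consequence of Theorem \ref{urs}.
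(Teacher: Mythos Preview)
Your argument is correct, but it does substantially more work than the paper's own proof, which is essentially a two-line observation: by Theorem~\ref{urs} the set $\mathfrak{C}$ is a uniform reference set with minorizing measure $\theta = U_{\mathfrak{C}}$, and since $U_{\mathfrak{C}}(\mathfrak{C}) = 1 > 0$, strong aperiodicity follows directly from Definition~3.2. The paper defers the verification of $\lambda$-irreducibility to a separate corollary (Corollary~\ref{irreducible}) immediately following this one, and proves it there by the same cascading-exchange construction as in Theorem~\ref{urs}, without the extended discussion of large $V$ or large $W$ that you include. So the difference is organizational: you fold the irreducibility argument into this corollary because Definition~3.2 presupposes it, whereas the paper treats the two conclusions in parallel and relies on the reader to assemble them. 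Your care about accessibility from configurations with very small or very large site energies is not wrong, but the paper is content to note that the explicit event in the proof of Theorem~\ref{urs} has positive probability from any starting point, and leaves the details as routine.
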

\begin{proof}
By theorem \ref{urs}, $\mathfrak{C}$ is a uniform reference
set. In addition $U_{\mathfrak{C}}(\mathfrak{C}) > 0$. The strong aperiodicity follows from
its definition.
\end{proof}

Therefore $\mathbf{E}_{n}$ is aperiodic. 

\medskip

\begin{cor}
\label{irreducible}
$\mathbf{E}_{n}$ is $\lambda$-irreducible, where $\lambda$ is the
Lebesgue measure on $\mathbb{R}^{N}_{+}$. 
\end{cor}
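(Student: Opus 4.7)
The plan is to extend the scripted construction used in the proof of Theorem \ref{urs} so as to show that, from any $\mathbf{E}_{0} \in \mathbb{R}^{N}_{+}$ and any target $\bar{\mathbf{E}} \in \mathbb{R}^{N}_{+}$, the $t$-step kernel $P^{t}(\mathbf{E}_{0},\cdot)$ places strictly positive mass on every neighbourhood $D(\bar{\mathbf{E}},\xi)$ of $\bar{\mathbf{E}}$, for suitably chosen $t$ and $\xi$. Once this is in hand, $\lambda$-irreducibility follows immediately: by the Lebesgue density theorem any Borel $A \subset \mathbb{R}^{N}_{+}$ with $\lambda(A)>0$ contains a ball $D(\bar{\mathbf{E}},\xi)$ with $\lambda(A\cap D(\bar{\mathbf{E}},\xi))>0$, and then $P^{t}(\mathbf{E}_{0},A)\geq P^{t}(\mathbf{E}_{0},A\cap D(\bar{\mathbf{E}},\xi))>0$.

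For the extension step, I would revisit the three-stage recipe used in Theorem \ref{urs} --- drain the interior site energies down into $[e/2,e]$ by firing clocks $1,\ldots,N-1$ in sequence; pump a sufficiently large quantity of energy into site $N$ from the right heat bath; then cascade it back toward site $1$, using a random-halves exchange at each stage to land the configuration in $D(\bar{\mathbf{E}},\xi)$. The argument only requires that (i) every site has strictly positive energy whenever its clock is scheduled to ring, so $R_{i}>0$, (ii) the redistribution variable $p$ is uniform on $(0,1)$, and (iii) the heat-bath injections $X_{L},X_{R}$ have strictly positive density on $(0,\infty)$. None of these points uses $\mathbf{E}_{0}\in\mathfrak{C}$ or $\bar{\mathbf{E}}\in\mathfrak{C}$; by allowing $t$ and the pumping amount to depend on $\mathbf{E}_{0}$ and $\bar{\mathbf{E}}$, the same event construction applies verbatim to arbitrary starting and target configurations in $\mathbb{R}^{N}_{+}$.

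Chaining the density factors contributed at each scheduled transition through the Markov property then produces a strictly positive Lebesgue density for $P^{t}(\mathbf{E}_{0},\cdot)$ on $D(\bar{\mathbf{E}},\xi)$. The main obstacle I anticipate is the bookkeeping required to see that after the $O(N)$ prescribed transitions the joint law of $\mathbf{E}_{t}$ is genuinely absolutely continuous on an open subset of $\mathbb{R}^{N}_{+}$; concretely, one must check that the map sending the auxiliary uniform and exponential variables along the prescribed path to the terminal configuration has a non-vanishing Jacobian at a pre-image of $\bar{\mathbf{E}}$. The cascade stage makes this straightforward, since each random-halves step is an affine function of $p$ with nonzero slope, but the composition of all $O(N)$ steps has to be tracked explicitly.
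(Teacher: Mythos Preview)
Your proposal is correct and follows essentially the same route as the paper: both argue that the three-stage drain/pump/cascade construction from Theorem~\ref{urs} extends verbatim to arbitrary starting configurations $\mathbf{E}_{0}\in\mathbb{R}^{N}_{+}$ and arbitrary target regions, yielding $P^{t}(\mathbf{E}_{0},\cdot)\geq \eta\,U_{O}(\cdot)$ for some $\eta>0$. The paper is simply terser---it works with a rectangular box $O=\{c\le E_{i}\le C\}$ rather than a ball, takes $t=h$ fixed, and dispenses with the Jacobian discussion by invoking ``same construction as in Theorem~\ref{urs}'' (where the cascade step already delivers a uniform density on each coordinate interval).
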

\begin{proof}
Let $A \subset \mathbb{R}^{N}_{+}$ be a set with strictly positive
Lebesgue measure. Then there exists a set $O$ that has the form
$$
  O = \{ \mathbf{E} \,|\, 0 < c \leq E_{i} \leq C < \infty, i = 1, \cdots, N \}
$$
such that $\lambda(O \cap A) > 0$. 

For any $\mathbf{E}_{0} \in \mathbb{R}^{N}_{+}$ and the time
step $h$ as in Theorem 4.1, the same construction as in Theorem \ref{urs} implies that $P^{h}(\mathbf{E}_{0},
\cdot) > \eta U_{O}( \cdot)$ for some $\eta > 0$. Hence $P^{h}( \mathbf{E}_{0}, A) > \eta U_{O}(A)  > 0$.

\end{proof}

\subsection{Absolute continuity of the invariant measure}

This subsection aims to prove the absolute continuity of the invariant
probability measure with respect to the Lebesgue measure. For the sake
of simplicity, we denote the Lebesgue measure on $\mathbb{R}^{N}_{+}$
by $\lambda$. 

\begin{pro}
\label{abscont}
If $\pi$ is an invariant measure of $\mathbf{E}_{t}$, then $\pi$ is
absolutely continuous with respect to $\lambda$ with a strictly positive density. 
\end{pro}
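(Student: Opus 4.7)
The plan is to prove absolute continuity and strict positivity of the density as two separate assertions. The substantive part is absolute continuity, which I would establish via a smoothing lemma for $P^{h}$ combined with the minorization in Theorem \ref{urs}; strict positivity is then a routine consequence of $\lambda$-irreducibility (Corollary \ref{irreducible}).

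The first step is a smoothing lemma: \emph{if $\mu \ll \lambda$ then $\mu P^{h} \ll \lambda$}. Decomposing $P^{h}(\mathbf{E}, d\mathbf{x}) = e^{-\mathcal{R}(\mathbf{E}) h}\delta_{\mathbf{E}}(d\mathbf{x}) + \sum_{k \geq 1} q_{k}^{h}(\mathbf{E}, d\mathbf{x})$ according to the number of jumps in $[0,h]$, the delta-at-$\mathbf{E}$ term, integrated against $\mu = \rho_{\mu}\lambda$, gives the absolutely continuous measure with density $e^{-\mathcal{R}(\mathbf{E})h}\rho_{\mu}(\mathbf{E})$. For each $k \geq 1$ and each sequence of clocks that fired, $q_{k}^{h}(\mathbf{E}, \cdot)$ is the image under a smooth map $F_{k}$ of a product of continuous densities (uniform $p$'s, heat-bath exponentials, and the density of the jump times on a simplex); since $F_{k}$ is a submersion of generic rank $N$ onto $\mathbb{R}^{N}_{+}$, integrating also against the density $\rho_{\mu}$ on $\mathbb{R}^{N}_{+}$ yields an absolutely continuous image by the coarea formula.

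Given the smoothing lemma, I would deduce $\pi \ll \lambda$ by Lebesgue-decomposing $\pi = \pi_{ac} + \pi_{s}$ and forcing $\pi_{s}$ to vanish. Taking singular parts of $\pi = \pi_{ac}P^{h} + \pi_{s}P^{h}$ and using $\pi_{ac}P^{h} \ll \lambda$ gives $\pi_{s} = (\pi_{s}P^{h})_{s}$; conservation of total mass $\|\pi_{s}P^{h}\| = \|\pi_{s}\|$ then forces $(\pi_{s}P^{h})_{ac} = 0$, so $\pi_{s}P^{h} = \pi_{s}$ and $\pi_{s}$ is itself invariant under $P^{h}$. Theorem \ref{urs} yields $\pi_{s} = \pi_{s}P^{h} \geq \eta \pi_{s}(\mathfrak{C})\, U_{\mathfrak{C}}$ with $U_{\mathfrak{C}} \ll \lambda$; comparing singular and absolutely continuous parts forces $\pi_{s}(\mathfrak{C}) = 0$. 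Invariance then gives $0 = \pi_{s}(\mathfrak{C}) = \int P^{nh}(\mathbf{E}, \mathfrak{C})\, \pi_{s}(d\mathbf{E})$ for every $n$, so $P^{nh}(\cdot, \mathfrak{C}) = 0$ holds $\pi_{s}$-almost everywhere for every $n$; since Corollary \ref{irreducible} gives $\bigcup_{n \geq 1}\{\mathbf{E}: P^{nh}(\mathbf{E}, \mathfrak{C}) > 0\} = \mathbb{R}^{N}_{+}$, a countable-union bound yields $\pi_{s} \equiv 0$. For strict positivity of $f := d\pi/d\lambda$, if $Z := \{f = 0\}$ had positive Lebesgue measure, the invariance $\pi = \pi P^{nh}$ would give $\int P^{nh}(\mathbf{E}, Z)\, f(\mathbf{E})\, d\mathbf{E} = 0$ for every $n$, so $P^{nh}(\cdot, Z) = 0$ for $\lambda$-a.e.\ $\mathbf{E}$ with $f(\mathbf{E}) > 0$; combined with $\lambda$-irreducibility this forces $\lambda(\{f > 0\}) = 0$, contradicting $\pi(\mathbb{R}^{N}_{+}) = 1$.

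The main obstacle is the smoothing lemma, specifically verifying that for each jump-count $k$ and each sequence of fired clocks, the push-forward map $F_{k}\colon (\mathbf{E}, \text{jump times}, p\text{'s}, \text{exponentials}) \mapsto \mathbf{E}_{h}$ has generic rank $N$ into $\mathbb{R}^{N}_{+}$. This is geometrically clear---each exchange injects a fresh continuous variable, and chains of adjacent interior exchanges can redistribute randomness across all $N$ coordinates, while heat-bath exchanges supply additional exponentials that escape the total-energy simplex---but writing out the Jacobian carefully for every clock sequence, and tracking the degeneracy of $\mathcal{R}(\mathbf{E})$ near $\partial\mathbb{R}^{N}_{+}$, accounts for most of the routine work in the proof.
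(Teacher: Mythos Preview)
Your overall architecture matches the paper's: a smoothing lemma (``$\mu\ll\lambda \Rightarrow \mu P^{t}\ll\lambda$'') followed by a mass-counting contradiction from invariance. The substantive difference is in how the smoothing lemma is proved, and here the paper's route is considerably simpler than yours.

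You propose to verify, for every jump count $k$ and every clock sequence, that the full map $F_{k}\colon(\mathbf{E},\text{times},p\text{'s},\text{exponentials})\to\mathbf{E}_{h}$ is a generic-rank-$N$ submersion and then invoke the coarea formula; you correctly flag this as the main obstacle. The paper (Lemma~\ref{abscont2}) bypasses it entirely by observing that the conditional operator for a jump sequence $l=(c_{1},\ldots,c_{n})$ factors as $P_{S(l)}=P_{S(c_{n})}\cdots P_{S(c_{1})}$, so one only needs to check that a \emph{single} exchange preserves absolute continuity. For an interior clock $k$ this is the one-line formula
\[
\hat{\xi}_{k}(E_{1},\ldots,E_{N})=\int_{0}^{1}\xi\bigl(E_{1},\ldots,p(E_{k}+E_{k+1}),(1-p)(E_{k}+E_{k+1}),\ldots,E_{N}\bigr)\,dp,
\]
and similarly explicit formulas handle the two bath clocks. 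No rank computation, no coarea, and no separate treatment of the boundary $\partial\mathbb{R}^{N}_{+}$ is needed.

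The paper's contradiction step is also shorter than yours. Rather than first showing $\pi_{s}P^{h}=\pi_{s}$ and then killing $\pi_{s}$ via irreducibility, the paper uses Theorem~\ref{urs} together with accessibility of $\mathfrak{C}$ to conclude directly that $P^{t}(\mathbf{E},\cdot)$ has a strictly positive density on $\mathfrak{C}$ for \emph{every} $\mathbf{E}\in\mathbb{R}^{N}_{+}$; hence $\pi_{\perp}P^{t}$ already carries nonzero absolutely continuous mass, forcing the a.c.\ mass of $\pi P^{t}$ to exceed that of $\pi$---an immediate contradiction. Your argument is correct but takes an extra lap.
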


The proof is similar to that of Proposition 6.1 of
\cite{li2014nonequilibrium}. For
$\mathbf{E} \in \mathbb{R}^{N}_{+}$ and $t > 0$, we have decomposition 
$$
  P^{t}( \mathbf{E}, \cdot) = \nu_{\perp} + \nu_{abs} \,,
$$
where $\nu_{abs}$ and $\nu_{\perp}$ are absolutely continuous and
singular component with respect to $\lambda$,
respectively. We need to show that an absolutely continuous component
cannot revert back to singularity as time evolves. 

\begin{lem}
\label{abscont2}
For any probability measure $\mu \ll \lambda$, $\mu P^{t} \ll \lambda$
for any $t  >0$.  
\end{lem}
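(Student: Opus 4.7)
The plan is to decompose $\mu P^{t}$ according to the number of jumps $\mathbf{E}_{t}$ makes in $[0,t]$ and check absolute continuity of each piece. The key structural fact is that although individual bond rates are state-dependent, the total rate $\mathcal{R}(\mathbf{E}) = \sum_{i=0}^{N} R(E_i, E_{i+1})$ is bounded by $\Lambda := (N+1)K$. I would uniformize: let $(N_t)$ be a Poisson process of rate $\Lambda$ with jump times $0 = \tau_0 < \tau_1 < \cdots$, and at each $\tau_k$ sample an independent bond label; with probability $R(E_{i-1}, E_i)/\Lambda$ execute the corresponding exchange, and otherwise do nothing. The resulting process has the same law as $\mathbf{E}_t$, but now the number of ``candidate'' jumps is a rate-$\Lambda$ Poisson variable that is \emph{independent} of the trajectory construction conditional on its count.

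With this coupling, write
\[
\mu P^t \;=\; \sum_{n=0}^{\infty} \nu_n, \qquad \nu_n(A) \;=\; \mathbb{P}_\mu[\mathbf{E}_t \in A,\; N_t = n],
\]
and it suffices to prove $\nu_n \ll \lambda$ for every $n \geq 0$. For $n = 0$ the process has not moved so $\nu_0 = e^{-\Lambda t}\mu \ll \lambda$ by hypothesis. For $n \geq 1$, I would argue inductively: if the law of $\mathbf{E}_{\tau_k}$ is absolutely continuous, then so is that of $\mathbf{E}_{\tau_{k+1}^{+}}$. This reduces the lemma to checking that each of the three candidate transition kernels preserves absolute continuity.

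The identity (do-nothing) kernel is trivial. For a bath transition at site $1$, only $E_1$ changes, with $E_1' = p(E_1 + X_L)$ where $p \sim \mathrm{Unif}(0,1)$ and $X_L \sim \mathrm{Exp}(T_L^{-1})$ are independent of everything; since $p$ and $X_L$ both have positive Lebesgue densities, the conditional law of $E_1'$ given $(E_2,\ldots,E_N)$ has a density on $\mathbb{R}_+$, and combined with the unchanged absolutely continuous marginal on $(E_2,\ldots,E_N)$ the joint law remains absolutely continuous on $\mathbb{R}_+^N$ (symmetrically at site $N$). For an internal exchange between sites $i$ and $i+1$, only the pair $(E_i, E_{i+1})$ is affected, becoming $(pS,(1-p)S)$ with $S = E_i + E_{i+1}$ and $p \sim \mathrm{Unif}(0,1)$ independent of $(E_i, E_{i+1})$. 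The map $(p, S) \mapsto (pS, (1-p)S)$ is a diffeomorphism from $(0,1) \times \mathbb{R}_+$ onto $\mathbb{R}_+^2$ with Jacobian determinant $S > 0$, so the joint density of $(p, S)$ pushes forward to a density of $(E_i', E_{i+1}')$ on $\mathbb{R}_+^2$; tensoring with the (absolutely continuous) marginal on the remaining $N-2$ coordinates gives an absolutely continuous law on $\mathbb{R}_+^N$.

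Finally, $\nu_n$ is a mixture, against the joint law of jump times and bond labels (which is itself absolutely continuous on its parameter space), of iterated applications of these kernels to $\mu$; by the inductive step each realization is absolutely continuous, and an application of Fubini lets one write $\nu_n \ll \lambda$. Summing over $n$ yields $\mu P^t \ll \lambda$, completing the proof. The one subtle point is the internal-exchange step: one must exploit the independence of $p$ from $(E_i, E_{i+1})$ together with the strictly positive Jacobian of the polar-type change of variables, since a naive reading might suggest the image collapses onto the line $\{x + y = S\}$; it does not, precisely because the fresh uniform variable $p$ restores the missing dimension.
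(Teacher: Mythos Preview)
Your argument is correct and lands on the same endpoint as the paper---reducing to the claim that each single-jump kernel (identity, bath exchange, bond exchange) sends absolutely continuous measures to absolutely continuous measures---but you reach that reduction by a different decomposition. The paper conditions on the \emph{actual} ordered sequence of clock rings $l=(c_1,\dots,c_n)$ in $[0,t)$, writes $\mu P^t=\mu_\emptyset+\sum_l \mu_l P_{S(l)}$ with $\mathrm{d}\mu_l/\mathrm{d}\lambda=\mathbb{P}_{\mathbf{E}}[S(l)]\,\mathrm{d}\mu/\mathrm{d}\lambda$, and then factors $P_{S(l)}=P_{S(c_n)}\cdots P_{S(c_1)}$; the state-dependence of the rates is absorbed into the density weights $\mathbb{P}_{\mathbf{E}}[S(l)]$. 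You instead exploit the bound $\mathcal{R}\le (N+1)K$ to uniformize, so the number of candidate jumps becomes an exogenous Poisson variable and the induction on $k$ runs along a Markov chain whose one-step kernel is a state-dependent convex combination of the elementary kernels. The uniformization buys a cleaner independence structure (no need to track the law of state-dependent inter-jump times), at the cost of introducing phantom ``do nothing'' jumps; the paper's route is more direct but has to carry the rate-dependent weights through the decomposition. Two small wording issues in your write-up, neither fatal: the phrase ``sample an independent bond label'' is misleading since the thinning probabilities $R_i(\mathbf{E})/\Lambda$ are state-dependent (what is independent is the Poisson clock); and ``tensoring with the marginal on the remaining $N-2$ coordinates'' should really be ``conditioning on the remaining coordinates and then integrating,'' since the blocks need not be independent---the paper's explicit density formula $\hat\xi_k(\mathbf{E})=\int_0^1 \xi(\dots,p(E_k+E_{k+1}),(1-p)(E_k+E_{k+1}),\dots)\,\mathrm{d}p$ makes this transparent.
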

\begin{proof}
This proof is similar to that of Lemma 6.3 of
\cite{li2014nonequilibrium}. We include it here for the sake of completeness of this paper.  

Let $t > 0$ be fixed. We define $l = (c_{1}, \cdots, c_{n} )$ be the
sequence of energy exchanges, where $c_{i} = k$ means site $k$
exchanges energy with site $k+1$. (As before, heat baths are sites $0$
and $N+1$.) Let $S(l)$ be the event that energy exchanges $(c_{1},
\cdots, c_{n})$ occur during the time period $[0, t)$ in the order
specified, and no other energy exchanges occur. If zero
(resp. infinite many) energy exchange occurs on $[0, t)$, we denote
the corresponding event by $S(\emptyset)$
(resp. $S(\infty)$). Obviously $\mathbb{P}_{\mathbf{E}}[ S(\infty)] =
0$. 

For $S = S(l)$ or $S(\emptyset)$, we define the conditional Markov
operator
$$
  (\mu P_{S})(A) := \int_{\mathbb{R}^{N}_{+}} \mathbb{P}[
  \mathbf{E}_{t} \in A \,|\, \mathbf{E}_{0} = \mathbf{E}  \,|\, S]
  \mu( \mathrm{d} \mathbf{E})
$$
and the measure
$$
  \frac{\mathrm{d} \mu_{Q}}{\mathrm{d} \lambda} (\mathbf{E}):=
  \mathbb{P}_{\mathbf{E}}[S(Q)] \frac{\mathrm{d} \mu}{\mathrm{d}
    \lambda}(\mathbf{E}) 
$$
for $Q = l$ or $\emptyset$. Then by the law of total probability,
$$
  \mu P^{t} = \mu_{\emptyset} + \sum_{l} 
\mu_{l} P_{S(l)} \,.
$$
Therefore, it is sufficient to show that each term above is absolutely
continuous with respect to $\lambda$. Since for each $l = (c_{1},
\cdots, c_{n})$ we have the decomposition
$$
  P_{S(l)} = P_{S(c_{n})} \cdots P_{S(c_{1})} \,.
$$
Hence the proof is reduced to proving the absolute continuity of $\mu
P_{c_{i}}$ for each $i = 0 \sim N+1$, which is a straightforward
exercise. Let $\xi$ and 
$\hat{\xi}_{k}$ be the density of $\mu$ and $\mu P_{S(c_{k})}$,
respectively. Then if $0 < k < N$, we have
\begin{eqnarray*}
 & &\hat{\xi}_{k}( E_{1}, E_{2}, \cdots, E_{N})  \\
&=& \int_{0}^{1}  \xi(
  E_{1}, \cdots, E_{k-1},  p(E_{k} + E_{k+1}), (1 - p)(E_{k } +
  E_{k+1}), E_{k+2}, \cdots, E_{N}) \mathrm{d} p \,.
\end{eqnarray*}
For $k = 0$ and $N$, we have
\begin{eqnarray*}
 & &\hat{\xi}_{0}( E_{1}, E_{2}, \cdots, E_{N}) \\
&=& \int_{0}^{\infty} \int_{(E_{1} - \hat{E}_{1})^{+}}^{\infty}
\xi(\hat{E}_{1}, E_{2}, \cdots, E_{N}) \frac{1}{\hat{E}_{1} + E}
\frac{1}{T_{L}}e^{-E/T_{L}} \mathrm{d} E \mathrm{d} \hat{E}_{1}
\end{eqnarray*}
and
\begin{eqnarray*}
 & &\hat{\xi}_{N}( E_{1}, E_{2}, \cdots, E_{N}) \\
&=& \int_{0}^{\infty} \int_{(E_{N} - \hat{E}_{N})^{+}}^{\infty}
\xi(E_{1}, E_{2}, \cdots, \hat{E}_{N}) \frac{1}{\hat{E}_{N} + E}
\frac{1}{T_{R}}e^{-E/T_{R}} \mathrm{d} E \mathrm{d} \hat{E}_{N} \,.
\end{eqnarray*}

\end{proof}

\begin{proof}[{\bf Proof of Proposition \ref{abscont}. }] Let $\pi =
  \pi_{abs} + \pi_{\perp}$ be
  an invariant measure. Assume $\pi_{\perp} \neq 0$. For $t > 0$,
  $\pi_{abs}P^{t} \ll \lambda$ by Lemma \ref{abscont2}. By Theorem
  \ref{urs}, for any $\mathbf{E} \in \mathfrak{C}$, 
$P^{t/2}( \mathbf{E}, \cdot)$ has a nonzero absolutely continuous
component with respect to the Lebesgue measure, which has a strictly positive density on
$\mathfrak{C}$. Since $\mathfrak{C}$ is accessible within finitely many
energy exchanges, $P^{t/2}( \mathbf{E}, \mathfrak{C} ) > 0$ for all $\mathbf{E}
\in \mathbb{R}^{N}_{+}$. Hence for all $\mathbf{E} \in
\mathbb{R}^{N}_{+}$, $P^{t}( \mathbf{E}, \cdot)$ has a
nonzero absolutely continuous component with respect to the Lebesgue
measure, which has a strictly positive
density on $\mathfrak{C}$.

If $\pi_{\perp} \neq 0$, there must exist $M_{2}, M_{3} < \infty$ such that 
$$
 \pi_{\perp} \left ( \{ \mathbf{E} \,|\, V( \mathbf{E}) \leq M_{2}, W( \mathbf{E})
  \leq M_{3} \} \right ) > 0 \,.
$$
Therefore, Theorem \ref{urs} implies that $\pi_{\perp}P^{t}$ must have an absolutely continuous component. The
absolutely continuous component of $\pi P^{t}$ is strictly larger 
than that of $\pi$. This contradicts to the invariance of $\pi$.  

\end{proof}

\subsection{Excursion time before entering $\mathfrak{C}$}

Now we are ready to estimate the tail of $\tau_{\mathfrak{C}}$. Let
$\hat{V} = \max\{1, V \}$ and $\hat{W} = \max \{ 1, W \}$. 

\begin{thm}
\label{thm72}
For any $\epsilon > 0$, there exists a constant $C_{6} < \infty$ such
that
$$
  \mathbb{P}_{\mathbf{E}}[ \tau_{\mathfrak{C}} > n] \leq C_{6}(
  \hat{W}(\mathbf{E}) + \hat{V}( \mathbf{E}) ) n^{ -(\hat{\alpha} - \epsilon)}
$$
for any $\mathbf{E} \in \mathbb{R}^{N}_{+}$, where $\hat{\alpha} =
2 - 2 \eta$. 
\end{thm}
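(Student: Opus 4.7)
The plan is to apply the induced chain machinery of Theorem \ref{inducedchain} to the time-$h$ sampling chain $\mathbf{E}_n$, with the partition $X = G \cup B$ from Section \ref{high} and target set $\mathfrak{C} \subset G$. We therefore need to verify the two hypotheses of Theorem \ref{inducedchain} with exponent $\alpha = \hat{\alpha} = 2 - 2\eta$, identify the functions $\xi(\mathbf{E})$ and $\eta(\mathbf{E})$, and read off the conclusion.

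\textbf{Step 1 (Tail of $T_{n+1}-T_n$).} Proposition \ref{lem61} gives the $\hat{\alpha}$-th moment bound
$\mathbb{E}\big[(T_{n+1}-T_n)^{\hat{\alpha}}\,|\,\hat{\mathbf{E}}_n\big] \leq C_5\,\hat{V}(\hat{\mathbf{E}}_n)$,
with the uniform bound $C_5 M_0$ once $\hat{\mathbf{E}}_n\in G$ (i.e.\ for $n\geq 1$). Applying Markov's inequality in the form $\mathbb{P}[Z>k] \leq k^{-\hat{\alpha}}\mathbb{E}[Z^{\hat{\alpha}}]$ yields
$\mathbb{P}[T_{n+1}-T_n > k\,|\,\hat{\mathbf{E}}_n] \leq \xi(\hat{\mathbf{E}}_n)\,k^{-\hat{\alpha}}$,
where $\xi(\mathbf{E}) := C_5\,\hat{V}(\mathbf{E})$ and $\xi_1 := C_5 M_0$ dominates $\xi$ on $G$. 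This is exactly assumption (i) of Theorem \ref{inducedchain} with $\alpha=\hat{\alpha}$.

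\textbf{Step 2 (Exponential tail of $\hat{\tau}_{\mathfrak{C}}$).} Theorem \ref{lyapunovinduced} gives the geometric drift $\hat{P}W(\mathbf{E}) \leq (1-\delta) W(\mathbf{E})$ for $\mathbf{E}\in G$ with $W(\mathbf{E}) > M_1$; I use $\hat{W} = \max\{W,1\}$ in order to land in the setting $[1,\infty]$ of Theorem \ref{expfirstpassage}. For $W(\mathbf{E}) > M_1$ one has $\hat{P}\hat{W} \leq \hat{P}W + 1 \leq (1-\delta)W+1$, and after enlarging $M_1$ if necessary so that $\delta M_1 \geq 2$, one gets $\hat{P}\hat{W} - \hat{W} \leq -\tfrac{\delta}{2}\hat{W}$ outside $\mathfrak{C}$; on $\mathfrak{C}$ a uniform bound $b$ suffices because $\mathfrak{C}$ has bounded $W$. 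Hence Theorem \ref{expfirstpassage} applies and yields $r>1$ and $C<\infty$ with $\mathbb{E}_{\mathbf{E}}[r^{\hat{\tau}_{\mathfrak{C}}}] \leq C\,\hat{W}(\mathbf{E})$. Markov's inequality then produces $\mathbb{P}_{\mathbf{E}}[\hat{\tau}_{\mathfrak{C}} > k] \leq C\,\hat{W}(\mathbf{E})\,e^{-\omega k}$ with $\omega=\log r$, giving assumption (ii) of Theorem \ref{inducedchain} with $\eta(\mathbf{E}) := C\,\hat{W}(\mathbf{E})$.

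\textbf{Step 3 (Conclusion).} With both assumptions of Theorem \ref{inducedchain} verified, for any $\epsilon>0$ one obtains a constant $c$ (depending on $\epsilon$) such that
$\mathbb{P}_{\mathbf{E}}[\tau_{\mathfrak{C}} > n] \leq c\,(\eta(\mathbf{E}) + \xi(\mathbf{E}))\,n^{-(\hat{\alpha} - \epsilon)} \leq C_6\,(\hat{W}(\mathbf{E}) + \hat{V}(\mathbf{E}))\,n^{-(\hat{\alpha} - \epsilon)}$
for a suitable $C_6$. The main obstacle I foresee is essentially bookkeeping: carefully matching the Lyapunov-$W$ framework of Section \ref{high} (which gives drift only for $W > M_1$ and which uses the possibly-sub-unit function $W$) against the hypothesis $W:X\to[1,\infty]$ demanded by Theorem \ref{expfirstpassage}, and making sure the set $\mathfrak{C}$ as defined coincides with the ``bottom'' level set entered by the Lyapunov argument (which may require absorbing constants into $M_1$). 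Once this is reconciled, the rest is a mechanical combination of Proposition \ref{lem61}, Theorem \ref{lyapunovinduced}, and Theorem \ref{inducedchain}.
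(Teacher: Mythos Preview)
Your overall approach is correct and matches the paper's: verify assumptions (i) and (ii) of Theorem \ref{inducedchain} using Proposition \ref{lem61} and Theorem \ref{lyapunovinduced}/Theorem \ref{expfirstpassage} respectively, then read off the conclusion. Step 1 is fine and identical to the paper's argument.

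There is, however, a genuine gap in Step 2 that is more than bookkeeping. Your Lyapunov drift $\hat P\hat W\le(1-\tfrac{\delta}{2})\hat W$ is established only for $\mathbf{E}\in G$, because Theorem \ref{lyapunovinduced} concerns the induced-chain kernel $\hat P$ acting on $G$. Consequently the bound $\mathbb{E}_{\mathbf{E}}[r^{\hat\tau_{\mathfrak{C}}}]\le C\,\hat W(\mathbf{E})$ is only valid for starting points $\mathbf{E}\in G$. But Theorem \ref{inducedchain} requires $\eta(\Psi_0)$ for \emph{every} $\Psi_0\in X$, including $\Psi_0\in B$. For such initial conditions, the induced chain enters $G$ only at $\hat{\mathbf{E}}_1=\mathbf{E}_{T_1}$, and the exponential bound you obtain is in terms of $\hat W(\mathbf{E}_{T_1})$, not $\hat W(\mathbf{E}_0)$. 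During the excursion $[0,T_1)$ the total energy can grow via heat-bath injections, and this growth must be controlled.

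The paper fills this gap by conditioning on $\mathbf{E}_{T_1}$ and invoking Lemma \ref{inflow3}: one has $\hat W(\mathbf{E}_{T_1})\le \hat W(\mathbf{E})+F_I([0,T_1))$, and $\mathbb{E}_{\mathbf{E}}[F_I([0,T_1))]\le C_3'\,\hat V(\mathbf{E})$ for $\mathbf{E}\notin G$ (this in turn rests on Proposition \ref{lem61} to bound $\mathbb{E}_{\mathbf{E}}[T_1]$). This yields $\eta(\mathbf{E})\le C(\hat W(\mathbf{E})+\hat V(\mathbf{E}))$ for all $\mathbf{E}$, which is precisely why the final bound carries both $\hat W$ and $\hat V$ in the prefactor. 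Once you add this one-step extension via Lemma \ref{inflow3}, your proof is complete and coincides with the paper's.
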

\begin{proof}
Note that $\mathfrak{C} \subset G$, therefore one can define $\hat{\tau}_{\mathfrak{C}}$ as
the first passage time to $\mathfrak{C}$ for the induced chain $\hat{\mathbf{E}}_{n}$. 

Note that $M_{0}, M_{1} > 1$. Apply Theorem \ref{expfirstpassage} to $\hat{W} = \max\{1, W(
\mathbf{E}) \}$. It follows from Theorem \ref{expfirstpassage} and
Theorem \ref{lyapunovinduced} that there exist constants
$C_{7} > 0$ and $r > 1$ such that, for every $\mathbf{E} \in G$,
$$
  \mathbb{E}_{\mathbf{E}}[ r^{ \hat{\tau}_{\mathfrak{C}}} ] \leq
  \mathbb{E}_{\mathbf{E}}[ \sum_{k = 0}^{\tau_{\mathfrak{C}} - 1}
  W(\hat{\mathbf{E}}_{k}) r^{k} ]  < \hat{W}(
  \mathbf{E}) C_{7} \,.
$$

Applying Markov's inequality to $r^{\hat{\tau}_{C}}$, we have
\begin{equation}
\label{tauhat}
  \mathbb{P}_{\mathbf{E}}[ \hat{\tau}_{\mathfrak{C}} > n] < C_{7} \hat{W}( \mathbf{E}) e^{- c
  n} \,,
\end{equation}
where $c = \log r$. For any given initial condition
$\mathbf{E} \in \mathbb{R}^{N}_{+}$,  we have
\begin{eqnarray*}
 & & \mathbb{P}_{ \mathbf{E}}[ \hat{\tau}_{\mathfrak{C}} > n] \\
&=& \int_{\mathbb{R}^{N}_{+}} \mathbb{P}[ \hat{\tau}_{\mathfrak{C}} > n\,|\,
\mathbf{E}_{T_{1}} = \tilde{E}] \mathbb{P}_{\mathbf{E}}[
\mathbf{E}_{T_{1}} = \mathrm{d}\tilde{E} ]\\
&\leq& \int_{\mathbb{R}^{N}} C_{7} \hat{W}( \tilde{E} )  e^{-c
  n} \mathbb{P}_{ \mathbf{E}}[ \mathbf{E}_{T_{1}} =
\tilde{E}] \\
&=& C_{7} e^{-c n} \mathbb{E}_{\mathbf{E}}[ \hat{W}( \mathbf{E}_{T_{1}})] \\
&\leq& C_{7} e^{-c n} \cdot ( \hat{W}(\mathbf{E}) +
\mathbb{E}_{\mathbf{E}}[ F_{I}([0, T_{1}))]\\
&\leq &  C_{7} e^{-c n}  \cdot( \hat{W}(\mathbf{E}) + \max
\{C_{3}, C_{3}' \hat{V}(\mathbf{E}) \} )  \\
&\leq&  C_{7} e^{-c n}  \cdot( \hat{W}(\mathbf{E}) + \max
\{C_{3}, C_{3}'\} \hat{V}(\mathbf{E})  ) \,,
\end{eqnarray*}
where the third line follows from Equation \eqref{tauhat} and the
second to last line follows from Lemma \ref{inflow3}, constants $C_{3}$ and
$C'_{3}$ are as in Lemma \ref{inflow3}.

By Proposition \ref{lem61}, we have
$$
  \mathbb{E}[ (T_{n+1} - T_{n})^{\hat{\alpha}} \,| \,
  \mathbf{\hat{E}}_{n}] \leq C_{5}\hat{V}( \mathbf{\hat{E}}_{n}) \,,$$
where $C_{5}$ is from Proposition \ref{lem61}. Without loss of
generality, we let $C_{5} \geq 1$. 

Applying Markov's inequality to $(T_{n+1} - T_{n})^{\hat{\alpha}}$,  we have
$$
  \mathbb{P}[ T_{n+1} - T_{n} > k \,|\, \mathbf{E}_{T_{n}} ] \leq C_{5}
  \hat{V}( \mathbf{E}_{T_{n}}) k^{- \hat{\alpha}} \,.
$$
If $n \geq 1$, since $\mathbf{E}_{T_{n}} = \mathbf{\hat{E}}_{n} \in G$, we have a uniform
bound $\hat{V}( \mathbf{\hat{E}}_{n}) \leq M_{0}$.

Therefore, assumptions of  Theorem \ref{inducedchain} are satisfied
for $\xi = \hat{V}$ and $\eta = \hat{W}$. For each given $\epsilon >
0$, notice that $\hat{V} \geq 1$ and $\hat{W} \geq 1$, we have
$$
  \mathbb{P}_{\mathbf{E}}[ \tau_{\mathfrak{C}} > n  ] \leq c ( C_{5}
  \hat{V}( \mathbf{E}) + \hat{W}(\mathbf{E}) + \max
\{C_{3}, C_{3}'\} \hat{V}(\mathbf{E}) )n^{-(\hat{\alpha} - \epsilon)} \leq C_{6}( \hat{W}(
\mathbf{E}) + \hat{V}( \mathbf{E})) n^{-(\hat{\alpha} -
  \epsilon)} \,,
$$
where $C_{6}$ is a constant depending on $\epsilon$ and $\eta$. This completes the proof.

\end{proof}

{\bf Proof of Theorem 1 and 2:} We first prove Theorem 1 and 2 for
$\mathbf{E}_{n}$.

It follows from Theorem \ref{thm72}
that
$$
  \mathbb{P}_{\mathbf{E}}[ \tau_{\mathfrak{C}} > n] \leq C_{6}(
  \hat{W}(\mathbf{E}) + \hat{V}( \mathbf{E}) ) n^{
    -(\hat{\alpha} - \epsilon)} \,,
$$
where $C_{6}$ is a constant that depends on both $\eta$ and
$\epsilon$. Since $\mathbb{P}_{\mathbf{E}}[ \tau_{\mathfrak{C}} > n]$
is at most $1$, we have
$$
  \mathbb{P}_{\mathbf{E}}[ \tau_{\mathfrak{C}} > n] \leq \max\{1, 2 C_{6}\}(
  \hat{W}(\mathbf{E}) + \hat{V}( \mathbf{E}) ) (n + 1)^{
    -(\hat{\alpha} - \epsilon)} \,.
$$
Therefore, by Lemma \ref{t2m}, there exists a constant $C_{8}$ that depends on
$\eta$ and $\epsilon$, such that
$$
  \mathbb{E}_{\mathbf{E}}[ \tau_{\mathfrak{C}}^{\hat{\alpha} - 2 \epsilon}]
  \leq C_{8}( \hat{W}(\mathbf{E}) + \hat{V}( \mathbf{E})) \,.
$$
Note that $\hat{\alpha } = 2 - 2 \eta$. Let $\epsilon = \eta$ and
$\eta = \frac{1}{4} \gamma$, we have 
\begin{equation}
\label{finitemoment}
  \mathbb{E}_{\mathbf{E}_{0}}[ \tau_{\mathfrak{C}}^{2 - \gamma}]
  \leq C_{8}( \hat{W}(\mathbf{E}) + \hat{V}( \mathbf{E})) \,.
\end{equation}
By the compactness of $\mathfrak{C}$, it is easy to see that
$$
  \sup_{\mathbf{E} \in \mathfrak{C}} \mathbb{E}_{\mathbf{E}}[ \tau^{2 -
    \gamma}_{\mathfrak{C}} ] < \infty \,.
$$
Note that $\hat{W} \leq W + 1$ and $\hat{V} \leq V + 1$. Therefore,
for any probability measure $\mu \in \mathcal{M}_{\gamma}$, $C_{8}(
\hat{W}(\mathbf{E}) + \hat{V}( \mathbf{E}))$ is integrable and 
\begin{equation}
\label{pfthm2}
  \mathbb{E}_{\mu}[\tau^{2 - \gamma}_{\mathfrak{C}}] < \infty \,.
\end{equation}

Theorem 1 for $\mathbf{E}_{n}$ is then proved by applying Theorem \ref{thm31} to $\mu, \nu
\in \mathcal{M}_{\gamma}$.

\medskip 

It follows from Corollary \ref{aperiod} and \ref{irreducible} that
$\mathbf{E}_{n}$ is a strongly aperiodic irreducible Markov chain. The
existence of an invariant measure $\pi$ then follows from 
$$
 \sup_{\mathbf{E} \in \mathfrak{C}} \mathbb{E}_{\mathbf{E}}[ \tau_{\mathfrak{C}} ] < \sup_{\mathbf{E} \in \mathfrak{C}} \mathbb{E}_{\mathbf{E}}[ \tau^{2 -
    \gamma}_{\mathfrak{C}} ] < \infty
$$
and Theorem \ref{exist}. The absolute continuity of $\pi$ comes from Proposition
\ref{abscont}. The $n^{-(1 - \gamma)}$ speed of convergence to $\pi$
is given by the existence of $\pi$ and Theorem \ref{thm31}.  

\medskip

It remains to prove the uniqueness of $\pi$. We prove the uniqueness
of $\pi$ for any $\mathbf{E}_{t}$ instead of $\mathbf{E}_{n}$. 

Recall the proof of Proposition \ref{abscont}, for any $t > 0$ and
$\mathbf{E} \in \mathbb{R}^{N}_{+}$, $P^{t}( \mathbf{E}, \cdot)$ has a strictly positive density on
$\mathfrak{C}$. This implies $\mathfrak{C}$ belongs to the support of
any invariant probability measure. However, any two distinct ergodic
invariant probability measures must be mutually singular. In addition,
every invariant probability measure must be a convex combination of
ergodic invariant measures. Hence $\mathbf{E}_{t}$ has at most one invariant
probability measure, which must be $\pi$. (See for example Theorem 1.7
of \cite{hairer2010convergence}.)

This completes the proof of Theorem 2 for $\mathbf{E}_{n}$.

\medskip

The return time argument for $\mathbf{E}_{n}$ also help us to obtain
the tail of a marginal distribution of $\pi$. Since $\pi$ is
absolutely continuous with respect to the Lebesgue measure, any
marginal distribution of $\pi$ also has absolute continuity. Let $\rho_{i}(E)$ be the
density of the marginal distribution of $\pi$ with respect to site
$i$. Let
$$
  q_{i}(E) = \int_{0}^{E}\rho_{i}(\hat{E}) \mathrm{d} \hat{E} =
  \mathbb{P}_{\pi}[E_{i} < E]
$$
be the marginal distribution function. The following lemma holds. 

\begin{lem}
\label{nesstail}
For any $i = 1, \cdots, N$ and any sufficiently small $\gamma > 0$,
there exists $0 < \delta < 1$ such that 
$$
q_{i}(E) \geq  E^{1/2 + \gamma} 
$$
if $0 < E < \delta$.
\end{lem}
\begin{proof}
Define sets $A = \{(E_{1}, \cdots, E_{N}) \,|\, 1 \leq
E_{i} \leq 2 \mbox{ for all } i \}$ and $B_{i}(E) = \{ (E_{1}, \cdots, E_{N})
\,|\, E_{i} \leq E \}$. It is then well
known that $q_{i}(E) = \pi(B_{i}(E))$ is equal to the
expected occupation time for $\mathbf{E}_{n}$ on $B_{i}(E)$, i.e.,
$$
  \pi(B_{i}(E) ) = \int_{A} \pi(\mathrm{d}y)
  \mathbb{E}_{y}[ \sum_{k = 0}^{\tau_{A} - 1} \mathbf{1}_{\{
    \mathbf{E}_{k} \in B_{i}(E)\}}] \,.
$$
(Theorem 10.4.9 of \cite{meyn2009markov}). Let $h$ be the time step
size when defining $\mathbf{E}_{n}$ in Theorem 4.1. We have
$$
   \pi(B_{i}(E) ) \geq \int_{A} \pi(
   \mathrm{d}y)\mathbb{P}_{y}[S_{1}] \mathbb{P}[S_{2} \,|\, S_{1}]
   \mathbb{P}[S_{3} \,|\, S_{1}, S_{2}] E^{-1/2} \,,
$$
where $S_{1}$, $S_{2}$, $S_{3}$ are the following three events:
$$
  S_{1} = \{\mbox{ clock } i \mbox{ rings exactly once at } t_{0} < h , \mbox{ all
    other clocks are silent }\} \,,
$$
$$
  S_{2} = \{ E_{i}(t_{0}^{+}) \in (0, E ) \mbox{ after } S_{1} \mbox{occurs}\} \,,
$$
and
$$
  S_{3} = \{ \mbox{ after } t = h, \mbox{ no energy exchange involves }
  E_{i} \mbox{ before } t = h + \left \lfloor E^{-1/2} \right \rfloor\} \,.
$$
Then it is easy to see that $P_{y}[S_{1}]$ is uniformly positive for
$y \in A$, $\mathbb{P}[S_{2} \,|\, S_{1}] > E/4$ is independent of the initial
condition, and $\mathbb{P}[ S_{3} \,|\, S_{1}, S_{2} ] \geq e^{-1}$ for any initial
condition because $E_{i}(h) < E$. In addition we have $\pi(A) > 0$. Hence for all sufficiently small $E > 0$, we have
$$
  q_{i}(E)  \geq c E^{1/2}
$$
for some constant $c > 0$ that is independent of $E$ and $i$. This
completes the proof.

\end{proof}

\medskip 

{\bf Theorem 1 and 2} for $\mathbf{E}_{t}$. The last step is to pass results
from $\mathbf{E}_{n}$ to $\mathbf{E}_{t}$. The contraction of the Markov
operator is easy to pass because we have 
$$
  \|\mu P^t - \nu P^t\|_{\rm TV} = 
\| (\mu P^{\lfloor  \frac{t}{h} \rfloor h}  - \nu P^{\lfloor
  \frac{t}{h} \rfloor h}) P^{(t-\lfloor  \frac{t}{h} \rfloor) h}\|_{\rm TV}  
\le \|\mu P^{\lfloor  \frac{t}{h} \rfloor h} - \nu P^{\lfloor
  \frac{t}{h} \rfloor h} \|_{\rm TV} \ \,.
$$

It remains to show that $\pi$, the invariant probability measure of $\mathbf{E}_{n}$, is
invariant for any $\mathbf{E}_{t}$, $t > 0$.

\begin{lem}
$\pi P^{t} = \pi$ for any $t > 0$. 
\end{lem}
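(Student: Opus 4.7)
The plan is to show that $\mu_t := \pi P^t$ is itself an invariant probability measure for the time-$h$ sampling chain $\mathbf{E}_n$, and then invoke the uniqueness of $\pi$ to conclude $\mu_t = \pi$ for every $t > 0$.

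First, I would observe that $\mu_t$ is a well-defined probability measure on $\mathbb{R}^N_+$ since $P^t$ is a Markov transition kernel. Next, using the Chapman--Kolmogorov semigroup property $P^s P^r = P^{s+r}$ of the Markov jump process $\mathbf{E}_t$, we compute
\begin{equation*}
  \mu_t P^h = \pi P^t P^h = \pi P^{t+h} = \pi P^h P^t = \pi P^t = \mu_t,
\end{equation*}
where the fourth equality uses $\pi P^h = \pi$, which is precisely the invariance of $\pi$ for the time-$h$ sampling chain established earlier. Thus $\mu_t$ is invariant for $\mathbf{E}_n$.

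Finally, I would appeal to the uniqueness of the invariant probability measure for $\mathbf{E}_n$. This was essentially shown in the proof of Theorem 2 via the argument that $P^{h}(\mathbf{E},\cdot)$ has strictly positive density on $\mathfrak{C}$ for every $\mathbf{E} \in \mathbb{R}^N_+$ (combining Theorem \ref{urs} with accessibility of $\mathfrak{C}$), so the entire state space lies in a single ergodic component of $\mathbf{E}_n$. Hence $\mathbf{E}_n$ admits at most one invariant probability measure, forcing $\mu_t = \pi$, i.e., $\pi P^t = \pi$ for all $t > 0$.

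There is essentially no obstacle here: the argument is the standard trick for lifting invariance from a skeleton chain to the full semigroup, and the only nontrivial ingredient (uniqueness for the skeleton chain) has already been established in the preceding subsection. The one point worth flagging is that the semigroup identity $P^s P^r = P^{s+r}$ must be applied as an identity of kernels acting on measures from the right, which is immediate from the Markov property of $\mathbf{E}_t$.
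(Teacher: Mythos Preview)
Your proof is correct and is in fact more direct than the paper's. Both arguments ultimately rest on the uniqueness of the $P^{h}$-invariant probability measure, but they reach it by different routes. The paper re-runs the entire machinery of Sections~4--5 for each small time step $r<h$ to produce an invariant measure $\pi_r$ for $P^{r}$, then uses a ``continuity at zero'' estimate together with the density of irrational-rotation orbits to show that $\pi_r$ is also $P^{h}$-invariant, hence $\pi_r=\pi$; writing $t=\lfloor t/h\rfloor h + r$ then gives $\pi P^{t}=\pi$. Your argument bypasses all of this: the single line $(\pi P^{t})P^{h}=\pi P^{t+h}=\pi P^{h}P^{t}=\pi P^{t}$, followed by uniqueness, does the job with no appeal to the construction of $\pi_r$, no continuity estimate, and no Diophantine approximation. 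What the paper's approach buys is an independent existence statement for each $P^{r}$, but that is not needed for the lemma at hand; your semigroup-commutativity trick is the standard and cleaner way to lift skeleton invariance to the full semigroup.
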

\begin{proof}
Note that the argument in Section 4 and 5 works for all sufficiently
small time steps. Let $h$ be the time step we have chosen for
$\mathbf{E}_{n}$. For any $r < h$, $P^{r}$ also admits an invariant
measure $\pi_{r}$. It is then sufficient to show that $\pi_{r} = \pi$
because any $t$ can be written as 
$$
  t = \lfloor  \frac{t}{h} \rfloor \cdot h + r
$$
for some $r < h$. 

In addition, we have the ``continuity at zero''. 
$$
  \| \pi_{r} P^{\delta} - \pi_{r} \|_{TV} \rightarrow 0 \quad \mbox{ as }
  \delta \rightarrow 0
$$
because all clock rates are less than $K$. 

Without loss of generality, assume $r/h \notin
\mathbb{Q}$. By the density of orbits in irrational rotations, there exist
sequences $i_{n}$, $j_{n} \in \mathbb{Z}^{+}$, such that $\delta_{n}
:= h - \frac{i_{n}}{j_{n}} r  
\rightarrow 0$ from right. Then 
$$
  \pi_{r}P^{h} = \pi_{r}P^{\frac{i_{n}}{j_{n}} r } P^{\delta_{n}} =
  \pi_{r} P^{\delta_{n}} \rightarrow \pi_{r} \,
$$
by the ``continuity at zero''.

Hence $\pi_{r}$ is invariant with respect to $P^{h}$. By uniqueness,
$\pi_{r} = \pi$. 

\end{proof}

Therefore, Theorem 1 and 2 also hold for $\mathbf{E}_{t}$.

\medskip

{\bf Remark: } 
We expect Theorem 1 to be close to optimal. Let
$\mathfrak{C}$ be a uniform reference set. Then it is easy to see that
there exists an $\epsilon > 0$ such that $B_{\epsilon} = \{ \mathbf{E}
\,|\, E_{i} < \epsilon \mbox{ for some } i = 1, \cdots, N \}$ is disjoint
with $\mathfrak{C}$. Let $D = \{ \mathbf{E} \,|\, E_{i} < L \}$ for
some large $L$. Then it is easy to see that for any $\mu$ and $\nu$ that
have uniformly positive density on $B_{\epsilon} \cap D$, we have $\mu (
B_{t^{-2}}) \sim O(t^{-2}) $ and $\nu ( B_{t^{-2}}) \sim O(t^{-2}) $
for $t \gg 1$. When $E_{i} < t^{-2}$,
the probability that no energy exchange occurs between site $i - 1$
and $i$, or between site $i$ and $i+1$, before time $t$ is
$O(1)$. Therefore, we have the lower bound on tails $\mathbb{P}_{\mu}[
\tau_{\mathfrak{C}} > t] \geq O(1) \cdot t^{-2}$ (and $\mathbb{P}_{\nu}[
\tau_{\mathfrak{C}} > t] \geq O(1) \cdot t^{-2}$). This implies the
coupling time $T$ has the tail
$$
  \mathbb{P}_{\mu, \nu }[ T > t] \geq O(1) \cdot t^{-2} \,.
$$
This is consistent with our numerical result in
\cite{li2017numerical} that the tail of
$\mathbb{P}[\tau_{\mathfrak{C}} > t]$ is $\sim t^{-2}$. Similar argument leads to the proof of {\it Proposition 4}.

\medskip

{\bf Proof of Theorem 3.} 
The following calculation is straightforward.
\begin{eqnarray*}
& &  \left|\int (P^{t} \zeta)(
  \mathbf{E}) \xi( \mathbf{E}) \mu( \mathrm{d}\mathbf{E}) - 
  \int (P^{t}\zeta)( \mathbf{E}) \mu(\mathrm{d}\mathbf{E}) \int \xi( \mathbf{E}) \mu(
  \mathrm{d} \mathbf{E}) \right| \\
& = & \left| \int \xi (\mathbf{E}) \left( (P^{t}\zeta)(\mathbf{E}) - 
\int (P^{t}\zeta)(\mathbf{Z}) \mu(\mathrm{d}\mathbf{Z}) \right) 
\mu( \mathrm{d} \mathbf{E}) \right|\\
& \le & \|\xi\|_{L^{\infty}} \ \|\zeta\|_{L^{\infty}} \ \int \|\delta_{\mathbf{E}}P^t - \mu P^t\|_{TV} \ 
\mu( \mathrm{d} \mathbf{E}) \,.
\end{eqnarray*}
It then follows from Corollary \ref{cor35} and equation
\eqref{finitemoment} that 
$$
   \|\delta_{\mathbf{E}}P^t - \mu P^t\|_{TV} \leq C (\hat{W}(
   \mathbf{E}) + \hat{V}( \mathbf{E}) + C_{\mu}) ( \left \lfloor t \right
   \rfloor )^{\gamma - 2} 
$$
for some $C, C_{\mu}< \infty$ that is independent of
$\mathbf{E}$. Since 
$\hat{W}(\mathbf{E}) + \hat{V}( \mathbf{E})$ is $\mu$-integrable, we
have
$$
  \left|\int (P^{t} \zeta)(
  \mathbf{E}) \xi( \mathbf{E}) \mu( \mathrm{d}\mathbf{E}) - 
  \int (P^{t}\zeta)( \mathbf{E}) \mu(\mathrm{d}\mathbf{E}) \int \xi( \mathbf{E}) \mu(
  \mathrm{d} \mathbf{E}) \right| \leq O(1) \cdot \|\xi\|_{L^{\infty}}
\ \|\zeta\|_{L^{\infty}} \ t^{\gamma - 2} \,,
$$
where the $O(1)$ term depends on $\gamma, N$, and $\mu$.  \qed

{\bf Proof of Proposition 4.}

Let $\nu$ be a probability measure that satisfies the following
properties.
\begin{itemize}
\item $\nu$ is absolutely continuous with respect to $\pi$.
\item Let $B_{\epsilon} = \{ \mathbf{E} = (E_{1}, \cdots, E_{N}) \in \mathbb{R}^{N}_{+} \,|\, E_{1} <
  \epsilon \}$ for some fixed small $\epsilon > 0$. $\nu$ satisfies
  $\mathrm{d}\nu/ \mathrm{d} \pi = 4$ on $B_{\epsilon}$.
\end{itemize}
Such $\nu$ must exist because $\pi$ is absolutely continuous with
respect the Lebesgue measure. Hence we can always find a small $\epsilon >
0$ such that $\pi(B_{\epsilon}) < \frac{1}{4}$.
\medskip

Then for $t > 0$, we have
\begin{eqnarray*}
\| \nu P^{t} - \pi  \|_{TV}& \geq& \| (\nu P^{t}) (B_{t^{-2}}) -
                                   \pi(B_{t^{-2}}) \|_{TV}  \geq (\nu P^{t})
  (B_{t^{-2}}) - \pi(B_{t^{-2}})\\
&\geq& \mathbb{P}_{\nu}[ \mbox{clock } 1 \mbox{ does
       not ring before } t, \mathbf{E}_{0} \in B_{t^{-2}}] -
       \pi(B_{t^{-2}}) \\
&=& \nu(B_{t^{-2}}) \mathbb{P}_{\nu |_{B_{t^{-2}}}}[ \mbox{clock } 0
    \mbox{ and } 1 \mbox{ does
       not ring before } t ] - \pi(B_{t^{-2}}) \\
&\geq& \nu(B_{t^{-2}})e^{-1} - \pi(B_{t^{-2}}) = (4 e^{-1} - 1)\pi(B_{t^{-2}}) \,,
\end{eqnarray*}
where $\nu |_{B_{t^{-2}}}$ is the restricted probability measure $\nu$
on $B_{t^{-2}}$. Let $\gamma > 0$ be a sufficiently small
number. Apply Lemma \ref{nesstail} to the marginal distribution function $q_{1}(E)$ and
small parameter $\gamma/2$. There should exist a $T < \infty$ such that
$$
  \pi(B_{t^{-2}}) = q_{1}(t^{-2}) \geq t^{-1 - \gamma}
$$
for any $t > T$. Hence
$$
  \|\nu P^{t} - \pi\|_{TV} \geq (4 e^{-1} - 1)(1+t)^{ -1 - \gamma} , \qquad t > T \,.
$$
In addition, we can always find $c > 0$ such that
$$
  \|\nu P^{t} - \pi\|_{TV} \geq c (1 + t)^{-1 - \gamma} 
$$
for all $0 \leq t \leq T$. The proof is completed by combining the two
estimates. \qed

{\bf Remark:}
The result of Lemma \ref{nesstail} implies that the invariant probability measure
$\pi$ may not belong to $\mathcal{M}_{\eta}$ for sufficiently small
$\eta > 0$. As a result, the initial probability distribution $\nu$ constructed in
the proof of {\bf Proposition 4} may not be in $\mathcal{M}_{\eta}$ either. Our numerical
simulation shows that the lower bound of convergence holds for many initial probability
distributions within the measure class $\mathcal{M}_{\eta}$ as well. But a rigorous proof
requires many detailed properties of $\pi$, which turns out to be very difficult due to
the nonequilibrium nature of the system.

\section*{Acknowledgement}

The author would like to thank Lai-Sang Young, Jonathan Mattlingly,
and Martin Hairer for many enlightening discussions.

\bibliography{ref}

\providecommand{\bysame}{\leavevmode\hbox to3em{\hrulefill}\thinspace}
\providecommand{\MR}{\relax\ifhmode\unskip\space\fi MR }
\providecommand{\MRhref}[2]{%
  \href{http://www.ams.org/mathscinet-getitem?mr=#1}{#2}
}
\providecommand{\href}[2]{#2}
\begin{thebibliography}{10}

\bibitem{baladi2015exponential}
Viviane Baladi, Mark~F Demers, and Carlangelo Liverani, \emph{Exponential decay
  of correlations for finite horizon sinai billiard flows}, Inventiones
  mathematicae \textbf{211} (2018), no.~1, 39--177.

\bibitem{barbu2009semi}
Vlad~Stefan Barbu and Nikolaos Limnios, \emph{Semi-markov chains and hidden
  semi-markov models toward applications: their use in reliability and dna
  analysis}, vol. 191, Springer Science \& Business Media, 2009.

\bibitem{bernardin2005fourier}
C{\'e}dric Bernardin and Stefano Olla, \emph{Fourier’s law for a microscopic
  model of heat conduction}, Journal of Statistical Physics \textbf{121}
  (2005), no.~3, 271--289.

\bibitem{bunimovich1992ergodic}
Leonid Bunimovich, Carlangelo Liverani, Alessandro Pellegrinotti, and Yurii
  Suhov, \emph{Ergodic systems of n balls in a billiard table}, Communications
  in mathematical physics \textbf{146} (1992), no.~2, 357--396.

\bibitem{chernov1999decay}
Nikolai Chernov, \emph{Decay of correlations and dispersing billiards}, Journal
  of Statistical Physics \textbf{94} (1999), no.~3-4, 513--556.

\bibitem{chernov2006chaotic}
Nikolai Chernov and Roberto Markarian, \emph{Chaotic billiards}, no. 127,
  American Mathematical Soc., 2006.

\bibitem{chernov2000decay}
Nikolai Chernov and Lai-Sang Young, \emph{Decay of correlations for lorentz
  gases and hard balls}, Hard ball systems and the Lorentz gas, Springer, 2000,
  pp.~89--120.

\bibitem{cuneo2016non}
No{\'e} Cuneo and J-P Eckmann, \emph{Non-equilibrium steady states for chains
  of four rotors}, Communications in Mathematical Physics (2016), 1--37.

\bibitem{cuneo2015non}
No{\'e} Cuneo, Jean-Pierre Eckmann, and Christophe Poquet,
  \emph{Non-equilibrium steady state and subgeometric ergodicity for a chain of
  three coupled rotors}, Nonlinearity \textbf{28} (2015), no.~7, 2397.

\bibitem{dolgopyat2011energy}
Dmitry Dolgopyat and Carlangelo Liverani, \emph{Energy transfer in a fast-slow
  hamiltonian system}, Communications in Mathematical Physics \textbf{308}
  (2011), no.~1, 201--225.

\bibitem{eckmann1999nonequilibrium}
J-P Eckmann, C-A Pillet, and Luc Rey-Bellet, \emph{Non-equilibrium statistical
  mechanics of anharmonic chains coupled to two heat baths at different
  temperatures}, Communications in Mathematical Physics \textbf{201} (1999),
  no.~3, 657--697.

\bibitem{eckmann2006nonequilibrium}
J-P Eckmann and L-S Young, \emph{Nonequilibrium energy profiles for a class of
  1-d models}, Communications in Mathematical Physics \textbf{262} (2006),
  no.~1, 237--267.

\bibitem{gaspard2008heat2}
Pierre Gaspard and Thomas Gilbert, \emph{Heat conduction and fourier's law in a
  class of many particle dispersing billiards}, New Journal of Physics
  \textbf{10} (2008), no.~10, 103004.

\bibitem{gaspard2008heat}
\bysame, \emph{Heat conduction and fourier’s law by consecutive local mixing
  and thermalization}, Physical review letters \textbf{101} (2008), no.~2,
  020601.

\bibitem{grigo2012mixing}
Alexander Grigo, Konstantin Khanin, and Domokos Szasz, \emph{Mixing rates of
  particle systems with energy exchange}, Nonlinearity \textbf{25} (2012),
  no.~8, 2349.

\bibitem{hairer2010convergence}
Martin Hairer, \emph{Convergence of markov processes}, lecture notes (2010).

\bibitem{haydn2005hitting}
N~Haydn, Y~Lacroix, S~Vaienti, et~al., \emph{Hitting and return times in
  ergodic dynamical systems}, The annals of Probability \textbf{33} (2005),
  no.~5, 2043--2050.

\bibitem{jarner2002polynomial}
S{\o}ren~F Jarner, Gareth~O Roberts, et~al., \emph{Polynomial convergence rates
  of markov chains}, The Annals of Applied Probability \textbf{12} (2002),
  no.~1, 224--247.

\bibitem{kipnis1982heat}
C~Kipnis, C~Marchioro, and E~Presutti, \emph{Heat flow in an exactly solvable
  model}, Journal of Statistical Physics \textbf{27} (1982), no.~1, 65--74.

\bibitem{lefevere2010hot}
Rapha{\"e}l Lefevere and Lorenzo Zambotti, \emph{Hot scatterers and tracers for
  the transfer of heat in collisional dynamics}, Journal of Statistical Physics
  \textbf{139} (2010), no.~4, 686--713.

\bibitem{li2015stochastic}
Yao Li, \emph{On the stochastic behaviors of locally confined particle
  systems}, Chaos: An Interdisciplinary Journal of Nonlinear Science
  \textbf{25} (2015), no.~7, 073121.

\bibitem{li2017numerical}
Yao Li and Hui Xu, \emph{Numerical simulation of polynomial-speed convergence
  phenomenon}, Journal of Statistical Physics \textbf{169} (2017), 697--729.

\bibitem{li2013existence}
Yao Li and Lai-Sang Young, \emph{Existence of nonequilibrium steady state for a
  simple model of heat conduction}, Journal of Statistical Physics \textbf{152}
  (2013), no.~6, 1170--1193.

\bibitem{li2014nonequilibrium}
\bysame, \emph{Nonequilibrium steady states for a class of particle systems},
  Nonlinearity \textbf{27} (2014), no.~3, 607.

\bibitem{li2016polynomial}
\bysame, \emph{Polynomial convergence to equilibrium for a system of
  interacting particles}, Annals of Appiled Probability \textbf{27} (2017),
  no.~1, 65--90.

\bibitem{lindvall2002lectures}
Torgny Lindvall, \emph{Lectures on the coupling method}, Courier Dover
  Publications, 2002.

\bibitem{liverani2012toward}
Carlangelo Liverani and Stefano Olla, \emph{Toward the fourier law for a weakly
  interacting anharmonic crystal}, Journal of the American Mathematical Society
  \textbf{25} (2012), no.~2, 555--583.

\bibitem{meyn1993stability}
Sean~P Meyn and Richard~L Tweedie, \emph{Stability of markovian processes iii:
  Foster-lyapunov criteria for continuous-time processes}, Advances in Applied
  Probability (1993), 518--548.

\bibitem{meyn2009markov}
\bysame, \emph{Markov chains and stochastic stability}, Cambridge University
  Press, 2009.

\bibitem{nummelin1978splitting}
Esa Nummelin, \emph{A splitting technique for {H}arris recurrent markov
  chains}, Zeitschrift f{\"u}r Wahrscheinlichkeitstheorie und verwandte Gebiete
  \textbf{43} (1978), no.~4, 309--318.

\bibitem{nummelin1983rate}
Esa Nummelin and Pekka Tuominen, \emph{The rate of convergence in {O}rey's
  theorem for {H}arris recurrent markov chains with applications to renewal
  theory}, Stochastic Processes and Their Applications \textbf{15} (1983),
  no.~3, 295--311.

\bibitem{rey2003nonequilibrium}
Luc Rey-Bellet, \emph{Nonequilibrium statistical mechanics of open classical
  systems}, XIVTH International Congress on Mathematical Physics, 2003,
  pp.~447--454.

\bibitem{rey2000asymptotic}
Luc Rey-Bellet and Lawrence~E Thomas, \emph{Asymptotic behavior of thermal
  nonequilibrium steady states for a driven chain of anharmonic oscillators},
  Communications in Mathematical Physics \textbf{215} (2000), no.~1, 1--24.

\bibitem{rey2002fluctuations}
\bysame, \emph{Fluctuations of the entropy production in anharmonic chains},
  Annales Henri Poincare, vol.~3, Springer, 2002, pp.~483--502.

\bibitem{ruelle2012mechanical}
David Ruelle, \emph{A mechanical model for fourier’s law of heat conduction},
  Communications in Mathematical Physics \textbf{311} (2012), no.~3, 755--768.

\bibitem{sasada2013spectral}
Makiko Sasada et~al., \emph{Spectral gap for stochastic energy exchange model
  with nonuniformly positive rate function}, The Annals of Probability
  \textbf{43} (2015), no.~4, 1663--1711.

\bibitem{yarmola2014sub}
Tatiana Yarmola, \emph{Sub-exponential mixing of open systems with
  particle--disk interactions}, Journal of Statistical Physics (2013), 1--20.

\bibitem{yarmola2013sub}
\bysame, \emph{Sub-exponential mixing of random billiards driven by
  thermostats}, Nonlinearity \textbf{26} (2013), no.~7, 1825.

\end{thebibliography}
\bibliographystyle{amsplain}
\end{document}